\theoremstyle{plain}
\newtheorem{lemma}{Lemma}[section]
\newtheorem{theorem}[lemma]{Theorem}
\newtheorem{proposition}[lemma]{Proposition}
\newtheorem{corollary}[lemma]{Corollary}
\theoremstyle{definition}
\newtheorem{example}[lemma]{Example}
\newtheorem{remark}[lemma]{Remark}
\newtheorem{definition}[lemma]{Definition}
\numberwithin{equation}{section} \thispagestyle{empty} \voffset
\begin{document}

\baselineskip 15truept
\title{On Generalized p.q.-Baer $*$-rings}

\date{}

\author{ Sanjay More and  Anil Khairnar }
\address {\rm Department of Mathematics, Prof. Ramkrishna More College, Akurdi  Pune-411044, India.}
\email{\emph{sanjaymore71@gmail.com}} 

\address{\rm Department of Mathematics, Abasaheb Garware College, Pune-411004, India.}
\email{\emph{anil\_maths2004@yahoo.com}}

\subjclass[2010]{Primary 16W10; Secondary 16D25} 

\maketitle {\bf \small Abstract:} {\small 
  We introduced the class of weakly generalized p.q.-Baer $*$-rings. It is proved that under some assumptions every weakly generalized p.q.-Baer $*$-ring  can be embedded in generalized p.q.-Baer $*$-ring.  We proved that a generalized p.q.-Baer $*$-rings has partial comparability. If a generalized p.q.-Baer $*$-ring satisfies the parallelogram law then it is proved that every pair of projections has an orthogonal decomposition. A separation theorem for generalized p.q.-Baer $*$-rings is obtained. As an application of spectral theory, it is proved that generalized p.q.-Baer $*$-rings have  a sheaf representation with injective sections. 
   
}

\noindent {\bf Keywords:} generalized  p.q.-Baer $*$-rings, generalized central cover,  weakly generalized p.q.-Baer $*$-ring, comparability, parallelogram law, generalized central strict ideal, sheaf representation.

\section{Introduction} 
	  
Let $R$ be a ring and $S$ be a nonempty subset of $R$. We write
$r_R(S)= \{a \in R ~|~ s a = 0, ~\forall ~s \in S \}$, and is called the
{\it right annihilator} of $S$ in $R$, and $l_R(S)= \{a \in R ~|~  a s = 0, ~\forall ~s \in S \}$, is the { \it left annihilator} of $S$ in $R$.         
A {\it $*$-ring $R$} is a ring equipped with an involution $x \rightarrow x^* $,
that is, an additive anti-automorphism of the period at most two.
An element $e$ of a $*$-ring  $R$ is called a {\it projection} if it is self-adjoint 
(i.e. $e=e^*$) and idempotent (i.e. $e^2=e$). 
A $*$-ring $R$ is said to be a {\it  Rickart $*$-ring},
if for each $x \in R$, $r_R(\{x\})=eR$, where $e$ is a projection
in $R$. A $*$-ring $R$ is said to be a {\it Baer $*$-ring}, if for every nonempty subset $S$ of $R$, $r_R(S)=eR$, where $e$ is a projection in $R$.

In \cite{Kap}, Kaplansky introduced Baer rings and Baer $*$-rings to
abstract various properties of $AW^*$-algebras, von Neumann
algebras, and complete $*$-regular rings. 
Early motivation for studying $*$-rings came from the rings of operators.
If $\mathscr{B}(H)$ is the set of all bounded linear operators on a (real or complex) Hilbert space $H$,
then each $\phi \in \mathscr{B}(H)$ has an adjoint, $adj (\phi) \in \mathscr{B}(H)$,
and $\phi \rightarrow adj(\phi)$ is an involution on the ring $\mathscr{B}(H)$. One can refer \cite{Anil2,Anil3,Anil4} for recent work on rings with involution.

In \cite{Bir}, Birkenmeier et al. introduced quasi-Baer $*$-rings as a generalization of Baer $*$-rings.
A $*$-ring $R$ is said to be a {\it quasi-Baer $*$-ring}, if
for every ideal $I$ of $R$, $r_R(I)=eR$, where $e$ is a projection in $R$.           
Birkenmeier et al. \cite{Bir5} introduced principally quasi-Baer (p.q.-Baer) $*$-rings as a generalization of quasi-Baer $*$-rings.
A $*$-ring $R$ is said to be a {\it p.q.-Baer $*$-ring}, if for every principal right ideal $aR$ of $R$, $r_R(aR)=eR$, where $e$ is a projection in
$R$. It follows that $l_R(aR)=Rf$ for
a suitable projection $f$. A $*$-ring $R$ is called a {\it generalized (principally) quasi-Baer $*$-ring } if, for every (principal) ideal $I$ of $R$, the right annihilator of $I^n$ is generated as a right ideal by a projection for some positive integer $n$ (depending on $I$). That is, there exists a projection $e\in R$ such that $r_R(I^n)=eR$,  \cite{ahmadiquasi2020}.
The class of generalized p.q.-Baer $*$-rings is broader than the classes of Baer $*$-rings and Rickart $*$-rings, see [\cite{ahmadiquasi2020}, \cite{Bir5}, \cite{Anil4}]. 

It is known that the set of projections of a Rickart $*$-ring forms a lattice \cite{Ber}. Also, there exists a Rickart $C^*$-algebra whose projection lattice is not complete \cite{Ber}.
 In the second section of this paper, we prove that the set of projections of a p.q.-Baer $*$-ring $R$ forms a lattice with the partial order $e \leq f$ if and only if $e=ef$, for projections $e,f \in R$.  Also, we give a characterization of quasi Baer $*$-ring in terms of completeness of the lattice of projections of a p.q.-Baer $*$-ring. 
 
 Let $R$ be a $*$-ring, $x \in R$. We say that $x$ possesses a {\it central cover}  if there exists a smallest central projection $h$ such that $hx=x$; that is, (i) $h$ is a central projection, (ii) $hx=x$, and (iii) if $k$ is a central projection with $kx=x$ then $h \leq k$. If such a projection $h$ exists, it is unique; we call it the central cover of $x$, denoted by $h=C(x)$. In \cite{Ber}, it is proved that in a Baer $*$-ring every element possesses a central cover. In \cite{Anil}, authors extended this result for p.q.-Baer $*$-rings and proved that every element of a p.q.-Baer $*$-ring possesses a central cover. In the second section of this paper, we introduce the concept of the generalized central cover of an element of a $*$-ring. We prove that every element of a generalized p.q.-Baer $*$-ring possesses a generalized central cover.  We give a characterization of generalized p.q.-Baer $*$-ring in terms of the existence of generalized central covers of all elements.

 In \cite{Ber}, weakly Rickart $*$-rings are introduced and it is proved that under some assumptions every weakly Rickart $*$-ring can be embedded in a Rickart $*$-ring with preservation of right projections. In \cite{Anil, Anil4}, authors extended this theory of Rickart $*$-rings, and Baer $*$-rings to p.q.-Baer $*$rings. 
 In section three of this paper, we introduce the concept of weakly generalized p.q.-Baer $*$-ring and give a characterization of generalized p.q.-Baer $*$-ring in terms of weakly generalized p.q.-Baer $*$-ring. It is proved that under some assumptions every weakly generalized p.q.-Baer$*$-ring can be embedded in a generalized p.q.-Baer $*$-ring with preservation of generalized central covers. 

 Let $A$ be a $*$-ring. The elements $e, f \in P(A)$ are said to be {\it equivalent} (written as $e \sim f$), 
 	if there exists $w\in A$
 	such that $w^*w=e$ and $ww^*=f$. A $*$-ring whose projections form a lattice is said to satisfy the {\it parallelogram law} if $e-e\wedge f \sim  e\vee f-f$ for every pair of projection $e$ and $f$. Note that Baer $*$-ring may fail to satisfy the parallelogram law but every von Neumann algebra satisfies the parallelogram law.  Let $e, f$ be projections in $R$, we say that $e$ is {\it dominated } by $f$,
 	written as $e \lesssim f$, if $e \sim g \leq f$ for some $g \in P(R)$ (the set of all projections in $R$).  
 Projections $e$ and $f$ in a $*$-ring $R$ are said to be {\it generalized comparable} if there exists a central projection $h$ such that $he \lesssim hf$ and $(1-h)f \lesssim (1-h)e$.
 	We say that a $*$-ring $R$ has {\it generalized comparability} $(GC)$ if every pair of projections is generalized comparable.  
  Projections $e$ and $f$ in a $*$-ring $R$ are said to be {\it very orthogonal} if there exists central projection $h$ such that $he = e$ and $hf = 0$. Projections $e$ and $f$ in a $*$-ring $R$ are said to be {\it partially comparable} if there exist non-zero projections $e_{0}$ and $f_{0}$ such that $e_{0} \le  e$,  $f_{0} \le f$, and $e_{0} \sim  f_{0}$.
 	We say that a $*$-ring $R$ has  {\it partial comparability} $(PC)$ if $eRf \neq  0$ implies  $e$ and  $f$ are partially comparable. 
  A $*$-ring $R$ has { \it orthogonal GC} if every pair of orthogonal projections are generalized comparable.	
   
 In section four we prove that in a generalized p.q.Baer $*$-ring, every pair of projections, parallelogram law holds provided one of the projections is central. 
 Also, we give a characterization of a generalized p.q.-Baer $*$-ring to satisfies the parallelogram law. It is proved that generalized p.q.-Baer $*$-ring has partial comparability (PC). Also, we obtained an orthogonal decomposition of two projections in a generalized p.q.-Baer $*$-ring satisfying parallelogram law.

 In \cite{Anil4}, authors introduced the concepts of 
 central strict ideals and prime central strict ideals
 in a p.q.-Baer $*$-ring.
  A separation theorem for p.q.-Baer $*$-rings is obtained. 
 In the section five of this paper, we introduce the concepts of generalized 
central strict ideals and prime generalized central strict ideals
in a generalized p.q.-Baer $*$-ring.
We prove that any two prime generalized central strict ideals of a generalized p.q.-Baer $*$-ring are incomparable. We give a characterization of prime generalized central strict ideals in a generalized p.q.-Baer $*$-ring.
We prove the separation theorem for generalized p.q.-Baer $*$-rings.

  A sheaf representation of a ring $R$ is just the sheaf space over a topological space $X$ such that $R$ can be recovered as the ring of continuous global sections.  
  Sheaf representations of rings have proved
  valuable either as tools for algebraic geometry or simply as structure theorems generalizing the representation of certain commutative rings as rings of continuous functions.
  When a ring $R$ is commutative the most widely used
  representations are the Grothendieck sheaf \cite{Gro1} and the Pierce sheaf \cite{Pierce}. It is quite natural to try extending these results to the general case where $R$
  is any ring. Numerous attempts have been  made to construct
  a sheaf representation for a general ring.
   An application of spectral theory to the sheaf representation of various algebraic 
  structures is studied by many authors, such as  
  \cite {Dau,  Shin, Tha}. 
  By a ``sheaf representation'' 
  of a ring, they mean a sheaf representation whose base space is $Spec(R)$ (collection of all prime ideals of $R$)
  and whose stalks are the quotients $R/O(P)$, where $P$ is a prime ideal of $R$ and 
  $O(P)=\{a \in R~|~aRs=0$ for some $s \in R\backslash P\}$.
  Almost all of these authors gave a sheaf representation of non-commutative rings over  $Spec(R)$ having stalks $R/O(P)$.  
  In the sixth section of this paper, it is proved that the set of all prime generalized central strict ideals of a generalized p.q.-Baer $*$-ring,  $\Sigma(R)$, carries the hull-kernel topology.  We obtained a sheaf representation of generalized p.q.-Baer $*$-rings.
 
 For recent work on $*$-rings and Rickart rings one can see \cite{Bay, Kara, Saad, Sidd}.
 
	\section{GENERALIZED P.Q.-BAER *-RING}

	 Let $P$ be a poset and $a,b \in P$. The {\it join} of $a$ and $b$, denoted by
		$a\vee b$ is defined as $a \vee b = \sup~ \{a, b\}$. The {\it meet} of $a$ and $b$, denoted by $a\wedge b$ is defined as $a \wedge b= inf~ \{a, b\}$. Recall the following result from \cite{Ber}, which says projections of Rickart $*$-rings form a lattice.
	          
	\begin{proposition} [{\cite[Proposition 7, page 14]{Ber}}] \label{tc1s4pr8} The projections of a Rickart $*$-ring form a lattice. Explicitly, $e \vee f=f+RP(e(1-f))$, $e \wedge f=e- LP(e(1-f))$ for every pair of projections. 
	\end{proposition}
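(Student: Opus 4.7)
The plan is to explicitly verify that the two candidate formulas give the supremum and infimum in the projection poset, using only the defining property of $RP$ and $LP$ (namely, for $x \in R$, $RP(x)$ is the smallest projection $p$ with $xp = x$, and dually for $LP$). Throughout, write $g = RP(e(1-f))$ and $k = LP(e(1-f))$.

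First I would handle the join. Since $e(1-f) \cdot f = 0$, the projection $f$ lies in the right annihilator of $e(1-f)$, which equals $(1-g)R$; this forces $gf = 0$, and taking adjoints gives $fg = 0$. Hence $f + g$ is a projection, and is $\ge f$ trivially. Using $e(1-f)g = e(1-f)$ together with $fg=0$ one computes $e(f+g) = ef + eg = ef + e(1-f)g = ef + e(1-f) = e$, so $e \le f+g$. For minimality, if $h$ is a projection with $e,f \le h$, then $fh = f$ gives $e(1-f)h = eh - efh = e - ef = e(1-f)$, which says $1-h$ annihilates $e(1-f)$ on the right; consequently $1-h \in (1-g)R$, which collapses to $gh = g$, and combined with $fh = f$ yields $(f+g)h = f+g$, i.e.\ $f+g \le h$. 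Thus $e \vee f = f + RP(e(1-f))$.

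Next I would treat the meet symmetrically. The key first observation is that $e \cdot e(1-f) = e(1-f)$, so by minimality of $LP$ we get $k \le e$, making $e-k$ a projection that is $\le e$. To see $e-k \le f$, start from $(e-k) \cdot e(1-f) = e(1-f) - k \cdot e(1-f) = 0$, so $(e-k)e = (e-k)ef$; since $(e-k)e = e-k$ (as $k \le e$), this rearranges to $(e-k)f = e-k$, i.e.\ $e-k \le f$. For the universal property, suppose $h$ is a projection with $h \le e$ and $h \le f$. Then $h \cdot e(1-f) = he(1-f) = h(1-f) = h - h = 0$, so $h$ sits in the left annihilator of $e(1-f)$, which is $R(1-k)$; this forces $hk = 0$, and then $h(e-k) = he - hk = h$, giving $h \le e - k$. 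Hence $e \wedge f = e - LP(e(1-f))$.

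The only mildly delicate step is the identity $(e-k)f = e-k$ used for $e-k \le f$: one must resist absorbing $e$ into $k$ prematurely and instead use the annihilator relation $(e-k)\cdot e(1-f) = 0$ followed by $(e-k)e = e-k$ in the correct order. Everything else is a direct application of the $RP$/$LP$ axioms and the basic fact that, for projections, $p \le q$ is equivalent to $pq = p$ (or $qp = p$).
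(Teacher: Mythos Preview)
The paper does not actually supply a proof of this proposition: it is merely quoted from Berberian \cite{Ber} as background, and the paper instead proves the analogous statement for p.q.-Baer $*$-rings (Proposition~\ref{s2pr2}), with central covers $C(\cdot)$ playing the role of $RP$ and $LP$. Your argument is correct and is exactly the standard one from Berberian; moreover it is step-for-step parallel to the paper's proof of Proposition~\ref{s2pr2}: show $gf=0$ from the annihilator description, deduce that $f+g$ is a projection dominating $e$ and $f$, check minimality via $e(1-f)h=e(1-f)$, and then obtain the meet by the dual computation (the paper shortcuts the meet via $e\wedge f = 1 - (1-e)\vee(1-f)$, whereas you verify it directly, but this is cosmetic).
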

	
		 In the following result, we prove that the set of projections in a p.q.-Baer $*$-rings forms a lattice. 
	\begin{proposition} \label{s2pr2}
		The set of projections of a p.q.-Baer $*$-ring form a lattice. Explicitly, for projections $e$ and $f$ in a p.q.-Baer $*$-ring; $e \vee f=f+C(e(1-f))$ and $e \wedge f=e-C(e(1-f))$. 
	\end{proposition}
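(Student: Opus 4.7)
My plan is to parallel the Rickart $*$-ring argument of Proposition~\ref{tc1s4pr8}, with the central cover $C(\cdot)$ playing the role of the right and left projections. By \cite{Anil}, every element of a p.q.-Baer $*$-ring possesses a central cover, so $h := C(e(1-f))$ is at our disposal. A preliminary observation is that in a p.q.-Baer $*$-ring, the annihilator $r_R(e(1-f)R)$ is of the form $pR$ for a projection $p$ that must in fact be central (the annihilator of a principal right ideal is always two-sided), and the minimality clause in the definition of the central cover pins down $p = 1 - h$.

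The heart of the argument is to verify the orthogonality $fh = 0$. Since $h$ is central, this is equivalent to $hf = 0$, which in turn is equivalent to $f \in (1-h)R = r_R(e(1-f)R)$, i.e.\ to $e(1-f)\,r\,f = 0$ for all $r \in R$. I expect this to follow from the identity $e(1-f)\cdot f = 0$ together with the centrality of $h$. Once orthogonality is secured, $(f+h)^* = f+h$ is immediate and $(f+h)^2 = f^2 + fh + hf + h^2 = f + h$ confirms $f+h$ is a projection; then $(f+h)f = f$ and $(f+h)e = fe + he = e$ (using $h\,e(1-f) = e(1-f)$ and $hf = 0$ to cancel the residual term) establish $f + h \geq e,f$. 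Dually, one first notes $h \leq e$ (which comes from the defining property of the central cover applied to $e(1-f) \in eR$, combined with centrality), so that $e - h$ is a self-adjoint idempotent, and then checks $e - h \leq e,f$.

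The remaining and subtlest step is extremality: showing $f + h$ is the least upper bound and $e - h$ the greatest lower bound. For the join, given any projection $g$ with $g \geq e, f$, one has $g\,e(1-f) = e(1-f)$; passing to the central cover $C(g)$ (which is central and also fixes $e(1-f)$) yields $h \leq C(g)$ by the minimality defining $h$. The main technical obstacle is to sharpen this to $h \leq g$ itself, so as to conclude $g \geq f + h$. I expect this step to use the p.q.-Baer structure more essentially than the mere existence of central covers, exploiting that the principal right ideal $e(1-f)R$ sits inside $gR$, so that $r_R(gR) \subseteq r_R(e(1-f)R) = (1-h)R$, forcing $1 - g \leq 1 - h$, i.e.\ $h \leq g$. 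The argument for the meet is analogous, and the bridge between centrality (where $h$ is minimal) and non-centrality (where the lattice join lives) is the crucial ingredient throughout.
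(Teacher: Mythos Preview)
Your approach parallels the paper's, and both share the same fatal gap: the minimality clause defining the central cover $C(x)$ only compares $C(x)$ against \emph{central} projections, yet at every key step you invoke it against non-central ones.

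Concretely, your argument for $hf=0$ needs $e(1-f)\,r\,f = 0$ for all $r\in R$, and you write that you ``expect'' this to follow from $e(1-f)\cdot f = 0$. It does not. Take $R = M_2(\mathbb{C})$ (a p.q.-Baer $*$-ring) with the matrix units $e = E_{11}$, $f = E_{22}$: then $e(1-f) = E_{11}$, and $E_{11}\, E_{12}\, E_{22} = E_{12} \neq 0$. Here $h = C(e(1-f)) = C(E_{11}) = I$, so $f + h = E_{22} + I$ is not even a projection, and the explicit formula in the statement fails outright. The same example kills your claim $h \le e$ (here $I \not\le E_{11}$): the fact that $e\cdot e(1-f) = e(1-f)$ only yields $h \le e$ via minimality if $e$ is central, and it is not.

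Your extremality argument has the analogous defect. From $e(1-f)R \subseteq gR$ you correctly obtain $r_R(gR) \subseteq (1-h)R$, but in a p.q.-Baer $*$-ring $r_R(gR) = (1-C(g))R$, not $(1-g)R$; the inclusion therefore yields only $h \le C(g)$, not $h \le g$. The paper's own proof makes the same leap, inferring $C(x) \le 1-f$ from $x(1-f) = x$ and later $C(x) \le k$ from $xk = x$, with $1-f$ and $k$ not assumed central. The formula as written simply cannot hold for arbitrary projections in a p.q.-Baer $*$-ring; some centrality hypothesis on $e$ or $f$ is needed (compare the hypothesis in Proposition~\ref{s5pr1}), and no rearrangement of the present argument will close this gap.
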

	\begin{proof} Let $R$ be p.q.-Baer $*$ring and $x=e(1-f)=(e-ef)$. Then $xf=e(1-f)f=(ef-ef) =0$.
		Since $R$ is a p.q.-Baer $*$-ring, we have $r(xR)= (1-C(x))R$.
		Let $g=1-C(x)$, therefore $C(x)=1-g$.
		We prove that $e \vee f=f+(1-g)=f+C(e(1-f))$.
		Since $xf=0$, we have $x(1-f)=x-xf=x$.
		Hence $C(x)\leq1-f$. This implies $1-g\leq1-f$, which is equivalent to $f \leq g$.
		This gives $f(1-g) =0$, and hence $h=f+(1-g)$ is a projection.
		Since $hf=f+(1-g)f =f$, we have $ f \leq h $.
		Now we prove $e \leq h$.
		Since $xC(x)=x$ gives $x(1-g)=x$.
		We have $x-xg=x$, that is $xg=0$. Therefore $(e-ef)g=0$.
		Hence $eg=efg$. This implies $eh=e(f+1-g)=ef+e-eg=ef+e-efg$. 
		Since $f(1-g) =0$, we have $e+ef(1-g)=e$.
		Therefore $e \leq h$.
		Hence $h$ is an upper bound of $e$ and $f$.
		Now to prove $h$ is least.
		Let $k$ be a projection such that $e \leq k$ and $f \leq k$.
		As $xk=(e-ef)k=ek-efk=e-ef=x$.
		So $C(x) \leq k $, thus $1-g \leq k$.
		Also $f\leq k $.
		Therefore $f+(1-g) \leq k$, which gives $ h \leq k$.
		Hence $e \vee f= h=f+(1-g)=f+C(x)=f+C(e(1-f))$
		Similarly, $e \wedge f= 1-(1-f) \vee (1-e)=1-\{(1-e)+C[(1-f)(1-(1-e))]\}
		= 1-\{1-e+C[(1-f)e]\}=e-C(e(1-f))$.
	\end{proof}	
	 A poset (lattice) is said to be {\it complete} if every subset has a supremum.	
	 The following result characterizes quasi-Baer$*$-rings in terms of completeness of the lattice of projections of a p.q.-Baer $*$-rings.  	
	\begin{proposition} \label{s2pr3} The following conditions on a $*$-ring $R$ are equivalent
		\begin{enumerate}
			\item $R$ is quasi-Baer $*$-ring ;
			\item $R$ is p.q.-Baer $*$-ring whose projections form a complete lattice.
		\end{enumerate}
	\end{proposition}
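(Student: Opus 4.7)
The plan is to prove the two implications separately, with $(2)\Rightarrow(1)$ handled through central covers and $(1)\Rightarrow(2)$ requiring a more delicate construction.

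For $(2)\Rightarrow(1)$, let $I$ be a two-sided ideal of $R$. The key is the decomposition
\[
r_R(I)=\bigcap_{a\in I}r_R(aR)=\bigcap_{a\in I}e_aR,
\]
where each $e_a$ is the projection supplied by p.q.-Baer applied to $aR$. Since $r_R(aR)=r_R(RaR)$ is the right annihilator of a two-sided, $*$-closed ideal, the usual involution argument forces each $e_a$ to be central. Using the completeness hypothesis, set $e=\bigwedge_{a\in I}e_a$. A central cover argument shows $e$ is itself central: $C(e)$ is central with $C(e)\ge e$, and because each $e_a$ is a central projection above $e$, minimality of $C(e)$ gives $C(e)\le e_a$ for every $a$, hence $C(e)\le\bigwedge e_a=e$, so $e=C(e)$. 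For the nontrivial inclusion $\bigcap_a e_aR\subseteq eR$, take $x\in\bigcap_a e_aR$; the relation $e_ax=x$ identifies $e_a$ as a central projection fixing $x$, so $C(x)\le e_a$ for every $a$, whence $C(x)\le e$, and
\[
ex=e\,C(x)\,x=C(x)\,x=x,
\]
placing $x$ in $eR$. Thus $r_R(I)=eR$ and $R$ is quasi-Baer.

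For $(1)\Rightarrow(2)$, quasi-Baer implies p.q.-Baer immediately: for a principal right ideal $aR$ one has $r_R(aR)=r_R(RaR)$, and $RaR$ being two-sided forces $r_R(RaR)=eR$ by hypothesis. For the completeness of the projection lattice, given a family $\{e_i\}_{i\in\Lambda}$ I would introduce the two-sided ideal $J=\sum_iR(1-e_i)R$ and, via quasi-Baer, write $r_R(J)=eR$ for a projection $e$ that is central by the same two-sided annihilator argument. Since $(1-e_i)\in J$, we have $(1-e_i)e=0$, whence $e_ie=e$ and $e\le e_i$ for each $i$, so $e$ is a lower bound of $\{e_i\}$.

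The main obstacle is showing $e$ is the \emph{greatest} lower bound. For a projection $f$ with $f\le e_i$ for all $i$, one must prove $f\le e$, equivalently $Jf=0$, i.e., $r(1-e_i)sf=0$ for all $r,s\in R$ and $i\in\Lambda$. The direct consequence of $f\le e_i$ is only $(1-e_i)f=0$; bridging to the stronger statement with an arbitrary intervening factor $s$ is nontrivial. I plan to close this gap using the central cover: monotonicity of $C$ on projections (from $f\le e_i$ one derives $C(e_i)f=e_if=f$, hence $C(f)\le C(e_i)$ by minimality) places $C(f)$ below $\bigwedge_iC(e_i)$ in the complete Boolean algebra of central projections of $R$ (that this algebra is complete follows directly from quasi-Baer by applying $r_R$ to $\sum_i(1-C(e_i))R$). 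Matching this inequality with the identification of $e$ via $r_R(J)$ and using the factorisation $f=C(f)f$ should force $f\in eR$. If a purely algebraic path proves elusive, a Zorn's lemma argument on $\{f\text{ projection}:f\le e_i\;\forall i\}$, with chains admitting upper bounds via Proposition~\ref{s2pr2} applied iteratively, serves as a fallback.
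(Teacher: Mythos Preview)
Your $(2)\Rightarrow(1)$ is correct and is essentially the paper's argument: both write $r_R(I)=\bigcap_{a\in I}e_aR$ with each $e_a$ central and identify this intersection with $(\bigwedge_ae_a)R$; you route through central covers where the paper argues more directly, but the content is the same.

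For $(1)\Rightarrow(2)$ the obstacle you name is genuine and neither proposed fix closes it. Your projection $e$ with $eR=r_R\bigl(\sum_iR(1-e_i)R\bigr)$ is central, and your own lower-bound computation already pins it down as the largest \emph{central} projection dominated by every $e_i$; that is not the infimum in the full projection lattice. Concretely, in $R=M_2(\mathbb{C})$ with a single rank-one projection $e_1$ one has $R(1-e_1)R=R$, hence $e=0$, while $\inf\{e_1\}=e_1\neq0$. The central-cover patch yields only $C(f)\le\bigwedge_iC(e_i)$, and since $C(e_i)\ge e_i\ge e$ that last infimum sits \emph{above} $e$ (it equals $1$ in the $M_2$ example), so nothing forces $C(f)\le e$ or $f\in eR$. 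The Zorn fallback produces only a maximal lower bound, not a greatest one.

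The paper's route for this direction is dual to yours: it constructs $\sup S$ rather than an infimum, setting $(1-g)R=\bigcap_{e\in S}r(eR)=r\bigl(\sum_{e\in S}ReR\bigr)$ and arguing $g=\sup S$. The upper-bound half is immediate from $e(1-g)=0$. For leastness the paper passes from $e\le k$ (all $e\in S$) to $1-k\in r(eR)$, i.e.\ to $eR(1-k)=0$ --- precisely the ``intervening factor $s$'' step you flagged, in dual form. So your diagnosis of the difficulty is accurate; observe, however, that this step \emph{is} immediate whenever $e$ is central, and completeness of the central-projection lattice is all that your $(2)\Rightarrow(1)$ argument actually consumes.
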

	\begin{proof}
	 (1) $\Rightarrow$ (2):
		If $R$ is a quasi Baer $*$-ring then it is a p.q.-Baer $*$-ring.
		So it is enough to prove the set of projections in $R$, denoted by $\tilde{R}$ is a complete lattice. By Proposition \ref{s2pr2}, $\tilde{R}$ is a lattice.
		Let $S \subseteq \tilde{R}$.
		We claim that $\bigcap_{e\in S}r(eR) =(1-supS)R$.
		Suppose $\bigcap_{e \in S} r(eR)=(1-g)R$.
		We will prove that $g= supS$.
		Now $1-g\in r(eR)$ for all $e\in S$. Therefore $e(1-g)=0$ for all $e\in S$, this implies $ e\leq g$ for all $e\in S$.
		Thus $g$  is an upper bound of $S$.
		Let $k\in \tilde{R}$ be such that $e\leq k$ for all $e\in S$.
		Therefore $ e(1-k) =0$ for all $e\in S$. Which gives 
		$ 1-k \in r(eR)$ for all $e\in S$, that is $1-k \in \bigcap_{e\in S}r(eR)$.
		Hence $1-k \in (1-g)R$, this implies $1-k=(1-g)(1-k)$. That is	$ g=gk$. Hence $g\leq k$. Thus $supS =g$.
		Similarly, $\bigcap_{e\in S}r((1-e)R) =(infS)R$.\\
		(2) $\Rightarrow$ (1):
		To prove $R$ is quasi Baer $*$-ring. Take an ideal $I$ in $R$.As $R$ is p.q.-Baer $*$-ring.Therefore for $x_i\in I,r(x_iR)=g_iR$ for some projection $g_i$.By assumption $g=sup\{g_i\}$ exists.We will prove that $r(I)=gR$ . Now $x\in r(I)$ if and only if $x_ix=0$ for all $i$  if and only if $x\in g_iR$ if and only if $x=g_ix$ for all $i$  if and only if $x(1-g_i)=0$ for all $i$ if and only if  $x(1-g)=0$ if and only if $x=gx$ if and only if $x\in gR$. Hence $r(I)=gR$. Thus $R$ is quasi Baer $*$-ring.
	\end{proof} 	

	 An involution of a $*$-ring $R$ is called {\it semi-proper} if $xRx^* =0$ implies $ x=0$ for any $x\in R$, \cite{Birnew}.
	An involution of a $*$-ring $R$ is called {\it quasi-proper} if $xRx^* =0$ implies $ x^n=0$ for some  $n\in \mathbb N$, \cite{ahmadiquasi2020}. 

	\begin{proposition}\label{s3pr2}
		Let $R$ be a generalized p.q.-Baer $*$-ring. Then, 			
			 for every $x\in R$ there exists $n\in \mathbb N$ such that $r(xR)^n\cap (x^*)^n R=\{0\}$;

	\end{proposition}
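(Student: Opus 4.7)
The plan is to invoke the defining property of a generalized p.q.-Baer $*$-ring for the principal two-sided ideal $I = RxR$. A small but essential preliminary is the identification $r(xR) = r(RxR) = r(I)$: since $R$ has $1$, the equation $xRa = 0$ is equivalent to $RxRa = 0$ for every $a \in R$, so the two annihilators coincide. In particular, $r(xR)$ is a two-sided ideal, and the power $r(xR)^n$ is unambiguously defined as an ideal product.

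By hypothesis, there exist $n \in \mathbb{N}$ and a projection $e \in R$ with $r(I^n) = eR$; this is the $n$ I would use. Next I would verify two inclusions. First, $r(I)^n \subseteq r(I^n)$: since $I^n \subseteq I$ and $I \cdot r(I) = 0$, any simple product $a_1 a_2 \cdots a_n$ with each $a_i \in r(I)$ satisfies $b \cdot a_1 \cdots a_n = 0$ for every $b \in I^n$, because $b a_1 \in I \cdot r(I) = 0$; summation extends this to all of $r(I)^n$. Hence $r(xR)^n = r(I)^n \subseteq eR$. Second, $x^n \in I^n = (RxR)^n$, obtained by inserting $1$ between successive copies of $x$. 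Consequently $x^n e \in I^n e = 0$, and applying the involution gives $e (x^*)^n = 0$, so $e \cdot (x^*)^n R = 0$.

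These two facts combine immediately. If $y \in r(xR)^n \cap (x^*)^n R$, then the first inclusion gives $y \in eR$, so $y = ey$, while writing $y = (x^*)^n b$ for some $b \in R$ yields $ey = e (x^*)^n b = 0$. Therefore $y = 0$, as desired. The only subtlety in this strategy is the opening identification $r(xR) = r(RxR)$, which ensures that a single $n$ supplied by the generalized p.q.-Baer hypothesis simultaneously controls the annihilator power $r(xR)^n$ and the containment $x^n \in I^n$; beyond this bookkeeping, no further assumption on the involution (such as semi-properness or quasi-properness) is needed.
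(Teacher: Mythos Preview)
The main issue is notational: you read $r(xR)^n$ as the ideal power $\bigl(r(xR)\bigr)^n$, whereas in this paper it denotes $r\bigl((xR)^n\bigr)$, the right annihilator of the $n$-th power of $xR$. This reading is forced by the very definition of a generalized p.q.-Baer $*$-ring (which asserts $r_R(I^n)=eR$) and is used consistently afterwards, for instance in Proposition~\ref{s3pr3} and Proposition~\ref{s3pr6}. Under your reading you establish only the weaker fact $\bigl(r(xR)\bigr)^n\cap (x^*)^nR=\{0\}$, since $\bigl(r(xR)\bigr)^n\subseteq r\bigl((xR)^n\bigr)$ and the inclusion is strict in general.

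That said, the ingredients you assembled already prove the intended statement. The paper's argument is simply: the hypothesis gives $r\bigl((xR)^n\bigr)=eR$ for a projection $e$ (tacitly using your identification $r((xR)^n)=r((RxR)^n)$); since $x^n\in (xR)^n$ one has $x^ne=0$, hence $e(x^*)^n=(x^ne)^*=0$; and any $y$ in the intersection then satisfies $y=ey=e(x^*)^ns=0$. Your detour through $\bigl(r(I)\bigr)^n\subseteq r(I^n)$ is unnecessary---in fact your own justification for it already shows $r(I)\subseteq r(I^n)$, so the exponent on the annihilator side never does any work. Dropping that step and arguing directly in $r(I^n)=eR$ recovers exactly the paper's proof.
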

	\begin{proof}
		 Let $x\in R$.
		Since $R$ is a generalized p.q.-Baer $*$-ring, 
		there exists $n\in \mathbb N$ and a projection $e\in R$  such that $ r(xR)^n=eR$.
		We prove that $r(xR)^n\cap (x^*)^n R=\{0\}$.
		Let $y\in r(xR)^n\cap (x^*)^n R$.
		So $y\in r(x^*)^nR$ implies $y =(x^*)^ns$ for some $s\in R$.
		Also, $y\in r(xR)^n =eR$. Therefore $y=ey =e(x^*)^ns =e(x^n)^*s = (x^ne)^*s=0$.
		Thus $r(xR)^n\cap (x^*)^n R=\{ 0\}$.

	\end{proof}
	
	 	\begin{proposition} \label{s3pr3} Let $R$ be a generalized p.q.-Baer  $*$-ring, and $x \in R$. Then there exists a central projection $e$ such that 
	\begin{enumerate}	
		\item $x^ne=x^n$ for some $n\in \mathbb N$;
		\item For some $n\in \mathbb N $, $(xR)^ny=\{0\}$ if and only if $ey=0$.
	\end{enumerate}
\end{proposition}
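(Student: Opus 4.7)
The plan is to obtain $e$ as the complement of the annihilator projection supplied by the defining property of a generalized p.q.-Baer $*$-ring, and to show that this complement must automatically be central because the annihilator in question is a two-sided ideal. Concretely, applying the definition to $x$, there exist $n \in \mathbb{N}$ and a projection $f \in R$ with $r_R((xR)^n) = fR$. I would then set $e := 1 - f$ and verify that this single exponent $n$ simultaneously witnesses (1) and (2), once centrality of $e$ is in hand.

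The main obstacle, and really the only substantive step, is to prove that $f$ is central. Since $xR$ is a right ideal, so is $(xR)^n$, and hence its right annihilator $fR = r_R((xR)^n)$ is in fact a two-sided ideal: for $y \in r_R((xR)^n)$ and $s \in R$, one has $(xR)^n (sy) = ((xR)^n s)\, y \subseteq (xR)^n y = 0$. Because $fR$ is two-sided, every $r \in R$ satisfies $rf \in fR$, and multiplying by $f$ on the left gives $rf = f(rf) = frf$; equivalently $(1-f)Rf = 0$. Applying the involution to this vanishing statement and using $f^* = f$ produces $fR(1-f) = 0$. Combining the two, $fr = frf = rf$ for all $r \in R$, so $f$, and hence $e = 1-f$, is central.

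With $e$ central the two conclusions follow at once, with the same $n$. For (1), the element $x^n$ lies in $(xR)^n$ (take each right-hand factor equal to $1$), so $x^n f \in (xR)^n \cdot fR = 0$, which gives $x^n e = x^n(1-f) = x^n$. For (2), the chain of equivalences $(xR)^n y = 0 \iff y \in r_R((xR)^n) = fR \iff y = fy \iff (1-f)y = 0 \iff ey = 0$ is immediate, so the asserted biconditional holds for the same exponent $n$ produced by the definition.
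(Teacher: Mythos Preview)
Your proof is correct and follows the same route as the paper's: take the projection $f$ with $r_R((xR)^n)=fR$, set $e=1-f$, and verify (1) and (2) using that $f$ is central. The paper simply asserts that $f$ is central (relying on the cited source for the definition), whereas you supply the standard argument via $(1-f)Rf=0$ and the involution; otherwise the two proofs coincide.
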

\begin{proof}
	(1): Let $x\in R$. Then $r(xR)^n =gR$ for some central projection $g$ and $n\in \mathbb N $. Let $e=1-g$. Since $x^ng =0$, we have $x^ne =x^n(1-g) =x^n-x^ng =x^n$.
	Also, $e$ is central as $g$ is central.\\
	(2): Let $n\in \mathbb N $ be such that $(xR)^ny=\{0\}$. Then $ y\in r(xR)^n =gR$. This implies $y=gy$, that is $y=(1-e)y$. Hence $ey=0$. Conversely, suppose $ey=0$. Then $y=y-ey=(1-e)y=gy$. Since $r(xR)^n =gR$ for some central projection $g$ and $n\in \mathbb N $. Therefore $(xR)^ny= (xR)^ngy=\{0\}$.
\end{proof}
\begin{remark}\label{s3rm2}  We can also prove $y(Rx)^n =\{0\}$ if and only if $yf=0$,  where $fx^n=x^n$.
\end{remark}
\begin{proposition} [{\cite[Proposition 3, page 35]{Ber}}] \label{tc1s4prop8}
	In a Baer $*$-ring $A$, every element $x$ has a central cover. Explicitly, if $e=RP(x)$ and $f=LP(x)$, then $C(x)=C(e)=C(f)$ and $(1-C(x))A=l_A(Ax)=l_A(Ae)=l_A(Af)$, $(1-C(x))A=r_A(xA)=r_A(eA)=r_A(fA)$.
\end{proposition}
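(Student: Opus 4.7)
The plan is to construct the central cover in three steps: first for projections, then for a general element via $RP$ and $LP$, and finally read off the annihilator identities.

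Stage one (projections). For a projection $p$, the ideal $ApA$ is two-sided and, since $p^* = p$, also $*$-closed. By the Baer property there is a projection $g$ with $r_A(ApA) = gA$. Because $ApA$ is a two-sided ideal, $r_A(ApA)$ is a two-sided ideal as well, so $gA$ is two-sided; this forces $ag \in gA$ for every $a$, and hence $ag = gag$. The $*$-closedness of $ApA$ yields $l_A(ApA) = r_A(ApA)^* = Ag$, which is also two-sided and gives $ga = gag$. Combining, $ag = ga$ for all $a \in A$, so $g$ is central. Setting $C(p) := 1 - g$, we have $pg = 0$, hence $C(p)p = p$; and if $k$ is any central projection with $kp = p$, then $1 - k \in r_A(ApA) = gA$, from which a short calculation gives $C(p) \leq k$. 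Thus $C(p)$ is the central cover of $p$.

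Stage two (general $x$). Let $e = RP(x)$ and $f = LP(x)$, so $xe = x$, $fx = x$, $r_A(\{x\}) = (1 - e)A$, and $l_A(\{x\}) = A(1 - f)$. I claim $C(e)$ is the central cover of $x$: centrality of $C(e)$ combined with $(1 - C(e))e = 0$ gives $x(1 - C(e)) = xe(1 - C(e)) = x(1 - C(e))e = 0$, so $C(e)x = xC(e) = x$; and if $k$ is central with $kx = x$, then $x(1 - k) = 0$ places $1 - k$ in $(1 - e)A$, whence $e \leq k$ and therefore $C(e) \leq k$. The symmetric argument shows $C(f)$ has the same properties, so by uniqueness $C(x) = C(e) = C(f)$. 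For the annihilator identities, first observe $r_A(eA) = r_A(AeA) = (1 - C(e))A$: the first equality holds because $eAy = 0$ implies $AeAy = 0$, and the second is stage one applied to $e$. The defining property of $RP$ then yields $y \in r_A(xA) \Leftrightarrow xay = 0 \;\forall a \Leftrightarrow eay = 0 \;\forall a \Leftrightarrow y \in r_A(eA)$, and $r_A(fA) = (1 - C(x))A$ is stage one applied to $f$. The left-annihilator identities follow by taking adjoints, using that the involution exchanges $l_A$ and $r_A$ and that $RP(x^*) = f$, $LP(x^*) = e$.

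The delicate point is the centrality of $g$ in the projection case, which requires extracting two-sided information from both the right and the left annihilators of $ApA$. This is where the $*$-closedness of $ApA$ (available because $p = p^*$) is essential, and it explains why the argument handles projections first and then reduces a general $x$ to its right and left projections, rather than attacking $AxA$ directly, since $AxA$ need not be $*$-closed.
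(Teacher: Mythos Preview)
The paper does not supply a proof of this proposition; it is quoted from Berberian \cite{Ber} as background and motivation for the authors' own results on p.q.-Baer and generalized p.q.-Baer $*$-rings. So there is no in-paper argument to compare against.

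Your proof is correct and is essentially Berberian's original argument. The key device --- passing to the two-sided $*$-ideal $ApA$, using the Baer property to write its right annihilator as $gA$, and extracting centrality of $g$ from the fact that both $gA$ and $Ag$ are two-sided --- is exactly the standard one. The reduction of a general $x$ to its right and left projections, and the verification that $r_A(xA)=r_A(eA)=r_A(AeA)$ via the defining property of $RP$, are also the expected steps. One minor remark: in Stage two, the chain $x(1-C(e))=xe(1-C(e))=x(1-C(e))e=0$ can be shortened to $xe(1-C(e))=x\cdot 0=0$ directly, since $(1-C(e))e=0$; the extra equality is harmless but unnecessary.
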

In \cite{Anil}, the authors proved results for p.q.-Baer $*$-rings analogous to the above results of Baer $*$-rings. They generalized Proposition \ref{tc1s4prop8} for p.q.-Baer $*$-rings and introduced
the concept of a weakly p.q.-Baer $*$-ring analogous to the concept of a weakly Rickart $*$-ring.

We introduce the concept of generalized central cover in a $*$-ring.
\begin{definition} Let $R$ be a $*$-ring and $x\in R$. Then the smallest central projection $e$ is called the {\it generalized central cover} of $x$ if $x^ne =x^n$ for some $n\in  N $. We write it as $GC(x)=e$.
\end{definition}
\begin{remark}\label{s3rm3} If $x$ is a central element  in the above proposition \ref{s3pr3} then $e$ is the smallest and hence it is $GC(x)$.
\end{remark}
\begin{proof}
	We have $x^ne=x^n$.
	Suppose $x^nk=x^n$ for some central projection $k$.
	Therefore $x^n(1-k) =0$, that is  $(xR)^n(1-k) =\{0\}$ since $x$ is a central element.
	Therefore $1-k\in r(xR)^n) =gR$.
	Hence $1-k =g(1-k)$ which implies $ 1-k \leq g$ that is 	$  1-g \leq k$ that implies $e\leq k$.
	Hence $e$ is the smallest central projection such that  $x^ne=x^n$.
	Thus $GC(x)=e$.
\end{proof}

	We have proved that in a generalized  p.q.-Baer  $*$-ring $R,~GC(x)$ exists for every central element $x\in R$. Further if $x$ is not central then $GC(x)=1$ when $x$ is not nilpotent and $GC(x)=0$ when $x$ is  nilpotent. Thus, in a  generalized  p.q.-Baer  $*$-ring $R,GC(x)$ exists for every  element $x\in R$. 		 			

In a  generalized  p.q.-Baer  $*$-ring $R$ there exists $x\in R$ such that $C(x)\neq GC(x)$ for $x\in R$.
\begin {example} Let $M_{2}(\mathbb{C})$ be a $*$-ring with conjugate transpose as involution. Consider the ring $R=M_{2}(\mathbb{C})\times M_{2}(\mathbb{C})$ with involution  $*$ defined component-wise.  Let us find projections in $R$. Now $E=(E_{1} ,E_{2}) \in R$ is a projection if and only if $E^2=E ,E^*=E$. So $(E_{1} ,E_{2})\dot (E_{1} ,E_{2})=(E_{1} ,E_{2}) ,(E_{1} ,E_{2})^* =(E_{1} ,E_{2})$.Thus $ (E_{1}^2 ,E_{2}^2)=(E_{1} ,E_{2}) , (E_{1}^* ,E_{2}^*)=(E_{1} ,E_{2})$. This gives 
   $E_{1}^2 =E_{1} ,E_{2}^2=E_{2} , E_{1}^*=E_{1} ,E_{2}^*=E_{2}$. Hence both $E_{1} ,E_{2}$ are projections in $M_{2}(\mathbb{C})$. Note that $(E_{1} ,E_{2}) \in R$ is central element if and only if $E_{1} ,E_{2}$ are central elements in $M_{2}(\mathbb{C})$. Let us find central elements in $M_{2}(\mathbb{C})$. As $A\in M_{2}(\mathbb{C})$ is central,  $AX=XA$ for every $X\in M_{2}(\mathbb{C})$.\\ Let $A= \begin {bmatrix} 
   a& b\\c&d
   \end{bmatrix}  $. Thus for  $X= \begin {bmatrix} 
   1& 0\\0&0
\end{bmatrix}  $ we have $ \begin {bmatrix} 
a& b\\c&d
\end{bmatrix}    \begin {bmatrix} 
1& 0\\0&0
\end{bmatrix}   =\begin {bmatrix} 
1& 0\\0&0
\end{bmatrix}   \begin {bmatrix} 
a& b\\c&d
\end{bmatrix}$. \\ Therefore $\begin {bmatrix} 
a& 0\\c&0
\end{bmatrix} =\begin {bmatrix} 
a& b\\0&0
\end{bmatrix}  $ which gives $b=c=0$. So $A= \begin {bmatrix} 
a& 0\\0&d
\end{bmatrix}  $. For $X= \begin {bmatrix} 
1& 1\\0&1
\end{bmatrix} , AX=XA $ gives $\begin {bmatrix} 
a& 0\\0&d
\end{bmatrix}   \begin {bmatrix} 
1& 1\\0&1
\end{bmatrix}=\begin {bmatrix} 
1& 1\\0&1
\end{bmatrix} \begin {bmatrix} 
a& 0\\0&d
\end{bmatrix}  $. Thus $\begin {bmatrix} 
a& a\\0&d
\end{bmatrix} =\begin {bmatrix} 
a& d\\0&d
\end{bmatrix}$ which implies $a=d$. Hence $A=\begin {bmatrix} 
a& 0\\0&a
\end{bmatrix}$ is a central element in $ M_{2}(\mathbb{C})$. So central element $E=\begin {bmatrix} 
a& 0\\0&a
\end{bmatrix}$ is a projection if $E^2=E ,E^*=E$. But $E^2=E$ gives $\begin {bmatrix} 
a& 0\\0&a
\end{bmatrix}\cdot \begin {bmatrix} 
a& 0\\0&a
\end{bmatrix} =\begin {bmatrix} 
a& 0\\0&a
\end{bmatrix}$. Thus $\begin {bmatrix} 
a^2& 0\\0&a^2
\end{bmatrix} =\begin {bmatrix} 
a& 0\\0&a
\end{bmatrix}$ gives $a^2=a$. Further, $E^*=E$ gives $\bar{a}=a$.Thus $a=0$ or $a=1$. Therefore the central projections in $ M_{2}(\mathbb{C})$ are $0 $ or $I$. So all central projections in   $R=M_{2}(\mathbb{C})\times M_{2}(\mathbb{C})$ are $(0,0),(0,I),(I,0),(I,I)$. \\ Take $A=(I,N)\in R$ where $N=\begin {bmatrix} 
0& 1\\0&0
\end{bmatrix} \in M_{2}(\mathbb{C})$. Let us find $C(A)$, the central cover of $A$. We need to find the smallest central projection $E=(E_{1},E_{2})$ such that $AE=A$. Thus $(I,N)  (E_{1},E_{2})=(I,N)$. So $(E_{1},NE_{2})=(I,N)$ gives $E_{1}=I , NE_{2}=N$. But $E_{2}=0$ does not satisfy $NE_{2}=N$. Hence $E_{2}=I$. Therefore only central projections in $R$ such that $AE=A$ is $E=(I,I)$ and hence $C(A)=(I,I)$. Now find $GC(A)$. We have to find the smallest central projection $E=(E_{1},E_{2})$ such that $A^nE=A^n$ for some $n\in \mathbb{N}$.  Note that $N=\begin {bmatrix} 
0& 1\\0&0\end{bmatrix} $. Thus $N^2=0$ and $N^n=0$ for every $n\geq 2$. So for $n\geq 2 , A^n=(I,N)^n=(I^n,N^n)=(I,0)$. Thus $A^nE=A^n$ gives $(I,0) (E_{1},E_{2})=(I,0)$ which implies $(E_{1},0)=(I,0)$ hence   $E_{1}=I$. Therefore $E=(I,0)$ or $(I,I)$ both satisfy $A^nE=A^n$ for $n\geq 2$. Now $(I,0)(I,I)=(I,0)$. Thus $(I,0) \leq (I,I)$. Hence $E=(I,0)$ is the smallest central projection such that  $A^nE=A^n$. Thus $GC(A)=(I,0)$. So $C(A)=(I,I)$ and $GC(A)=(I,0)$. Therefore $C(A) \neq GC(A)$ for $A=(I,N)\in R=M_{2}(\mathbb{C})\times M_{2}(\mathbb{C})$ where $N=\begin {bmatrix} 
0& 1\\0&0
\end{bmatrix} \in M_{2}(\mathbb{C})$.
\end{example}
\begin{remark} \label{s3rm4}  In a $*$-ring $R$, $GC(e)=e$, where $e$ is a central projection.
\end{remark}
In the following result, we discuss the interplay between the order of projections and their generalized center cover.
\begin{proposition}\label{s3pr4}
	Let $R$ be a $*$ -ring in which every projection $e$ has a generalized central cover $GC(e)$. Then 
\begin{enumerate}
		
\item 	$e\leq f$ implies $GC(e) \leq GC(f)$;
\item $GC(he) =hGC(e)$ for every central projection $h$.
\end{enumerate}	
\end{proposition}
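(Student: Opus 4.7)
My plan is first to exploit a simplification specific to the case at hand: since $e$, $f$, and (in part (2)) $he$ are all projections, the powers $p^n$ in the defining condition $p^n\,(\text{central proj.}) = p^n$ collapse to $p$. Thus for any projection $p$ in a $*$-ring where $GC(p)$ exists, $GC(p)$ is simply the smallest central projection $g$ with $pg = p$. I will use this reformulation throughout; it removes the integer $n$ from consideration and reduces everything to manipulations inside the Boolean algebra of central projections.

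For part (1), I set $g := GC(f)$, so that $fg = f$. Since $e \leq f$ means $e = ef$, I compute
\[
eg \;=\; (ef)g \;=\; e(fg) \;=\; ef \;=\; e.
\]
The minimality clause in the definition of $GC(e)$ then forces $GC(e) \leq g = GC(f)$.

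For part (2), let $g := GC(e)$ and let $h$ be a central projection; note that $he$ is itself a projection because $h$ is central and self-adjoint. The easy inclusion $GC(he) \leq hg$ follows because $hg$ is a central projection with $(he)(hg) = h^2 eg = he$. For the reverse inclusion, given any central projection $k$ with $(he)k = he$, I will form the lift
\[
k_1 \;:=\; 1 - h + hk.
\]
A short computation using centrality of $h$ and $k$ shows $k_1$ is a central projection and
\[
ek_1 \;=\; e - eh + ehk \;=\; e - he + hek \;=\; e,
\]
where the last equality uses the hypothesis $hek = he$. Minimality of $g$ then gives $g \leq k_1$, i.e.\ $gk_1 = g$; expanding yields $ghk = gh$, which is exactly $hg \leq k$. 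Taking $k = GC(he)$ gives $hg \leq GC(he)$, and combining with the easy direction I conclude $GC(he) = hg$.

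The main (and essentially only) obstacle is spotting the lift $k_1 = 1 - h + hk$ in part (2): one needs a central projection that equals $1$ off the support of $h$ and equals $k$ on the support of $h$, so that it centralizes $e$ globally while reducing to $k$ on the piece that matters for $he$. Once this formula is written down, every remaining step is routine Boolean algebra in the lattice of central projections.
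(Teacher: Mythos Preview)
Your argument for part~(1) is correct and coincides with the paper's. For part~(2) your approach is also essentially the paper's: the paper's key central projection is $GC(e) - (h - kh)\,GC(e)$, which in your notation $g = GC(e)$ equals $g - gh + ghk$, i.e.\ your lift $k_1 = 1 - h + hk$ multiplied through by $g$.

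There is, however, one small technical gap. Your lift $k_1 = 1 - h + hk$ presupposes that $R$ has a unity, but the proposition is stated for an arbitrary $*$-ring in which generalized central covers of projections exist; in the paper this covers the weakly generalized p.q.-Baer $*$-rings of Section~3, which need not be unital. The paper's version $g - gh + ghk$ avoids the issue: it is still a central projection (a direct check using $g,h,k$ central idempotents), and since $eg = e$ one gets
\[
e(g - gh + ghk) \;=\; e - eh + ehk \;=\; e - he + hek \;=\; e,
\]
so minimality gives $g \leq g - gh + ghk$, forcing $gh = ghk$ as you want. Simply replacing $1$ by $g = GC(e)$ in your construction repairs the proof completely.
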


\begin{proof}
(1): Let $e\leq f$.
	Therefore $e=ef=fe$.
	Let $GC(e) =h$ and $GC(f)=k$.
	Hence $eh=e$ and $fk=f$.
	Thus $e=ef=efk=ek$.
	Therefore $e=ek$ and  $GC(e) =h$ implies $ h\leq k$ that is $GC(e) \leq GC(f)$.\\
(2): Let $GC(he) =k$.
Therefore $(he)^nk=	(he)^n$ for some $n\in\mathbb N$. Thus $hek=he$. We have to prove that $ hGC(e)=k$. Now $hee=he$ implies  $ he \leq e $. Hence $ GC(he) \leq GC(e)$. That is $k \leq GC(e)$ which implies $ k = k.GC(e) $.
Also $he \cdot h=he$ implies $he\leq h$. Therefore $GC(he) \leq GC(h)$ implies $ k\leq h$.
Therefore $k=kh$. Thus $k=khGC(e)$.
As $hek=he$. So $(h-kh)e=0 \Rightarrow (h-kh)GC(e)e=0$. Hence $GC(e)-(h-kh)GC(e)$ is a central projection such that $[GC(e)-(h-kh)GC(e)]e=e$. Hence $GC(e) \leq [GC(e)-(h-kh)GC(e)]$. Therefore $(h-kh)GC(e) GC(e)=0$ and  $e\leq f$ implies $(f-e) e=0$. Thus $(h-kh)GC(e)=0$ implies $ hGC(e) =khGC(e)=k$.
Hence $GC(he) =hGC(e)$. 
\end{proof}
The following proposition is analogous to the result for Baer $*$-ring from \cite{Ber}.  
	\begin{proposition}\label{s3pr5} Let $R$ be a commutative generalized p.q.-Baer $*$-ring then;
	\begin{enumerate}	
		\item	 	$GC(x) =GC(x^*)$ for every $x\in R$;
		\item	 $(xR)^ny =0$ implies $ GC(x)GC(y) =0$ for some $n\in \mathbb N$.
	\end{enumerate}			
\end{proposition}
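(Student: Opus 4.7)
The plan is to treat the two parts independently: (1) follows from applying the involution symmetrically, while (2) is essentially Proposition~\ref{s3pr3}(2) combined with the minimality clause in the definition of the generalized central cover.

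For (1), I would start from the defining equation $x^{n}e=x^{n}$ with $e=GC(x)$. Taking adjoints and using that $e$ is a self-adjoint projection (so $e^{*}=e$) gives $e(x^{*})^{n}=(x^{*})^{n}$; commutativity lets me rewrite this as $(x^{*})^{n}e=(x^{*})^{n}$. Hence $e$ is a central projection fulfilling the defining condition for $GC(x^{*})$, so minimality forces $GC(x^{*})\le e=GC(x)$. Applying the identical argument with $x^{*}$ in place of $x$ and using $(x^{*})^{*}=x$ yields the reverse inequality, hence equality.

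For (2), I would choose $n$ to be precisely the exponent produced by the generalized p.q.-Baer condition applied to the principal ideal $xR$, so that $r((xR)^{n})=gR$ for some projection $g$, and I set $e:=1-g$. By Proposition~\ref{s3pr3} this $e$ is a central projection with $x^{n}e=x^{n}$ and $(xR)^{n}y=0 \iff ey=0$; Remark~\ref{s3rm3} identifies this $e$ with $GC(x)$, since every element of the commutative ring $R$ is central. Assuming $(xR)^{n}y=0$, Proposition~\ref{s3pr3}(2) delivers $ey=0$; commutativity then propagates this to $ey^{k}=0$ for every $k\ge 1$, whence $y^{k}(1-e)=y^{k}$. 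Thus $1-e$ is a central projection satisfying the condition for $GC(y)$, and by minimality $GC(y)\le 1-e$. Consequently $GC(x)\cdot GC(y)=e\cdot GC(y)=0$.

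The only delicate point is the selection of $n$ in part (2): the statement only promises existence of some such $n$, and the natural choice is exactly the generalized p.q.-Baer exponent for $xR$, since that is the $n$ for which Proposition~\ref{s3pr3}(2) applies on the nose. Once that choice is fixed, both parts reduce to routine commutativity manipulations, so I do not foresee a serious obstacle.
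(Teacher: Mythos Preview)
Your argument is correct and rests on the same ingredients as the paper's (Proposition~\ref{s3pr3} and the minimality built into the definition of $GC$), but your execution is tidier in both parts. For (1), the paper sets $e=GC(x^*)$, $f=GC(x)$ with possibly different exponents $m,n$, assumes without loss of generality $m<n$, and then handles the two inequalities $f\le e$ and $e\le f$ by separate arguments (the latter via $(e-f)(Rx^*)^n=0$ and the annihilator characterization); your symmetric approach---prove $GC(x^*)\le GC(x)$ once and then replace $x$ by $x^*$---sidesteps the case distinction entirely. For (2), the paper invokes the annihilator equivalence twice: first Proposition~\ref{s3pr3}(2) for $x$ to obtain $ey=0$, and then the left-sided version (Remark~\ref{s3rm2}) for $y$ to pass from $(Ry)^m e=0$ to $GC(y)\cdot e=0$; you instead use the minimality clause of $GC(y)$ directly after obtaining $ey=0$, which is equally valid and slightly more self-contained since it avoids appealing to Remark~\ref{s3rm2}.
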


\begin{proof}
	(1): Let $GC(x^*) =e$ and $GC(x) = f$.
	Hence $(x^*)^me =(x^*)^m$ and  $x^nf=x^n$ for $m ,n \in \mathbb N$.
	Suppose $m < n$.
	Therefore $ex^m=x^m$ that is $x^me=x^m$ this implies $ x^ne=x^n$	that is $	GC(x) \leq e $ hence $ f\leq e$.
	Also 	$f(x^*)^n =(x^*)^n$ and $e(x^*)^n =(x^*)^n$.
	Thus $(e-f)(x^*)^n =0$ implies $ (e-f)(Rx^*)^n =\{0\}$ that is $(e-f)e =0$. Hence $ e=ef$ implies $ e\leq f$.
	Hence $e=f$ i.e. $GC(x) =GC(x^*)$.\\
	(2): Suppose $GC(x) =e$ and 	$GC(y) =f$.
	Therefore $x^ne=x^n$, and $y^mf =y^m$.
	Thus $(xR)^ny =\{0\}$ implies $ ey=0 $ that is $ye=0$. Therefore $(Ry)^me =\{0\}$, which implies $ fe=0$ that is
	$ef =\{0\} $. Hence $ GC(x)\cdot GC(y) =0$.	
\end{proof}

In the following proposition, we give a characterization of the generalized  p.q.-Baer  $*$-ring.
\begin{proposition}\label{s3pr6}
	The following statements are equivalent for a $*$-ring $R$ 
	\begin{enumerate}	
		\item $R$ is  generalized  p.q.-Baer  $*$-ring.
		\item $R$ has a unity element and for every $x\in R$ there exists a central projection $e$ and $n\in \mathbb N$ such that $r(xR)^n =r(eR)^n$.
	\end{enumerate}				
\end{proposition}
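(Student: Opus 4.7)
The plan is to show that, when $e$ is taken to be a central projection, both $r((xR)^n)$ and $r((eR)^n)$ reduce to the same concrete right ideal $(1-e)R$. The input for the first reduction is Proposition \ref{s3pr3}: in a generalized p.q.-Baer $*$-ring, for each $x \in R$ there exist a central projection $e$ and an $n \in \mathbb N$ with $(xR)^n y = \{0\}$ if and only if $ey = 0$; this says precisely that $r((xR)^n) = (1-e)R$. The input for the second reduction is a short direct calculation: since $e$ is central and idempotent, $(eR)(eR) = eR$, so by induction $(eR)^n = eR$ (using $R \cdot R = R$, i.e.\ that $R$ has a unit); and then $r(eR) = \{y : eRy = 0\}$ coincides with $\{y : ey = 0\} = (1-e)R$, again by the centrality of $e$ together with $1 \in R$.

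For $(1) \Rightarrow (2)$ I will simply combine these two computations, noting that the central projection $e$ and exponent $n$ produced by Proposition \ref{s3pr3} are exactly those demanded by the statement. For $(2) \Rightarrow (1)$, the hypothesis supplies $r((xR)^n) = r((eR)^n)$, and the second computation identifies the right-hand side with $(1-e)R$; hence $r((xR)^n)$ is generated as a right ideal by the projection $1-e$. To conclude, I must pass from $(xR)^n$ to $(RxR)^n$, since the definition of generalized p.q.-Baer is phrased in terms of principal two-sided ideals. For this I will use the identity $(RxR)^n = R\,(xR)^n$ (a quick induction on $n$ based on $R \cdot R = R$), which together with $1 \in R$ forces $r((RxR)^n) = r((xR)^n) = (1-e)R$. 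So $R$ is a generalized p.q.-Baer $*$-ring.

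The bulk of the conceptual content has already been packaged into Proposition \ref{s3pr3}, so I do not expect a genuine obstacle. The one point to attend to carefully is the double role played by the centrality of $e$: it is needed both to collapse $r(eR) = \{y : eRy = 0\}$ down to $\{y : ey = 0\}$ and to force the telescoping $(eR)^n = eR$. Without centrality neither reduction would be valid, and the clean common form $(1-e)R$ would not be available on both sides of the asserted equality.
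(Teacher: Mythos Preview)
Your proposal is correct and takes essentially the same route as the paper: both directions hinge on Proposition~\ref{s3pr3} together with the computation $r((eR)^n)=(1-e)R$ for a central projection $e$ in a unital ring. Your extra step passing from $(xR)^n$ to $(RxR)^n$ via $(RxR)^n=R(xR)^n$ is a harmless addition of rigor; the paper treats $r((xR)^n)$ as its working form of the definition throughout and simply omits that reduction.
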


\begin{proof}
	$(1)\Rightarrow (2):$ Suppose $R$ is a generalized  p.q.-Baer  $*$-ring.
	Therefore $R$ has a unity element.
	Let $x\in R$. Therefore there exists a central projection $e$ such that 	 $(xR)^ny=\{0\}$ if and only if $ey=0$ for some $n\in  \mathbb N $ if and only if $(eR)^ny=\{0\}$.
	Hence 	$r(xR)^n =r(eR)^n$.\\
	$(2)\Rightarrow (1):$ 
		Let $x\in R$.
	Therefore $r(xR)^n =r(eR)^n$ for some central projection $e$ and $n\in\mathbb N$.
	We claim that $r(eR)^n = (1-e)R$.
	Let $y\in r(eR)^n$. Therefore $(eR)^ny =\{0\}$ implies $ ey=0 $. That is $y=y-ey=(1-e)y$ which implies $ y\in (1-e)R$.
	Hence $r(eR)^n \subseteq (1-e)R$.
	Now let  $y\in (1-e)R$.
	Therefore $y=(1-e)y$ that is $ ey=0 $. Thus $ (eR)^ny =\{0\}$ implies $ y\in r(eR)^n$.
	So $(1-e)R \subseteq r(eR)^n$.
	Thus $r(xR)^n =r(eR)^n =(1-e)R$.
	Hence $R$ is a {\it generalized  p.q.-Baer  $*$-ring}.	
\end{proof}

\section{UNITIFICATION OF WEAKLY GENERALIZED P.Q.-BAER *-RING}
In this section, we introduce the concept of weakly generalized p.q.-Baer $*$-rings. We prove that under some assumptions  weakly generalized p.q.-Baer $*$-ring can be embedded in a  generalized p.q.-Baer $*$-ring.

\begin{definition} A  $*$-ring $R$ is called  {\it weakly generalized   p.q.-Baer  $*$-ring} if for every $x\in R$ there exists central projection  $e$ such that $x^ne=x^n$ and  $(xR)^ny=\{0\}$ if and only if $ey=0$ for some $n\in \mathbb N $.
\end{definition}
In \cite{ahmadiquasi2020}, it is proved that the involution of a generalized p.q.-Baer $*$-ring is quasi-proper. In the following result, we prove that the involution of a weakly generalized p.q.-Baer $*$-ring is quasi-proper.

\begin{proposition}\label{s4pr1}
 If $R$ is a weakly generalized p.q.-Baer  $*$-ring then the involution of $R$ is quasi proper.
\end{proposition}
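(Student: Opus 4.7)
The plan is to apply the weakly generalized p.q.-Baer axiom directly to the element $x$ satisfying $xRx^*=0$. The axiom supplies a central projection $e \in R$ and an integer $n \in \mathbb{N}$ with the twin properties that $x^n e = x^n$ and, for all $y \in R$, $(xR)^n y = \{0\}$ if and only if $ey = 0$. The entire argument is then to test this biconditional on the specific choice $y = x^*$.

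First I would verify that $(xR)^n x^* = \{0\}$. A generic element of $(xR)^n x^*$ has the shape $x r_1 \, x r_2 \cdots x r_n \, x^*$; isolating the last block rewrites it as $(x r_1 \cdots x r_{n-1})\,(x r_n x^*)$, and the rightmost factor $x r_n x^*$ vanishes by the hypothesis $xRx^* = 0$. Hence $(xR)^n x^* = \{0\}$, and the biconditional yields $e x^* = 0$. Taking adjoints and using $e^* = e$ (since $e$ is a projection) gives $xe = (e x^*)^* = 0$. Substituting into the identity $x^n = x^n e$ produces $x^n = x^{n-1}(xe) = 0$, which is precisely the quasi-properness conclusion.

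There is essentially no obstacle to overcome: the content of the proposition is the observation that the central projection furnished by the weakly generalized p.q.-Baer axiom simultaneously plays the role of a right unit for $x^n$ and of a control on the annihilator of $(xR)^n$. The only mild subtlety worth flagging is that the \emph{same} integer $n$ must appear both in $x^n e = x^n$ and in the annihilator biconditional; this compatibility is built into the definition of a weakly generalized p.q.-Baer $*$-ring, so no separate verification is needed.
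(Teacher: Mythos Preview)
Your proof is correct and follows essentially the same route as the paper's own argument: apply the weakly generalized p.q.-Baer axiom to $x$, test the annihilator biconditional on $y=x^*$ to get $ex^*=0$, take adjoints to obtain $xe=0$, and combine with $x^n e = x^n$. Your write-up is in fact more explicit than the paper's, since you spell out why $(xR)^n x^* = \{0\}$ follows from $xRx^*=0$.
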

\begin{proof}
	Let $xRx^* =0$.
	As $x\in R$.
	Therefore there exists central projection $e$ and $n\in\mathbb N$ such that $x^ne=x^n$  and 	$(xR)^ny=\{0\}$ if and only if $ey=0$ for some 	$n\in \mathbb N $.
	We have $(xR)^nx^*=\{0\}$. Therefore 
	$ ex^* =0$ implies $  xe =0$. That is $ (xe)^n =0$ implies $ x^ne =0$. Thus  $ x^n =0$.
	Hence the involution  is quasi proper.	
\end{proof}

The following proposition gives a characterization of generalized  p.q.-Baer  $*$-ring in terms of weakly generalized p.q.-Baer  $*$-ring.

\begin{proposition}\label{s4pr2}
	The following conditions on a $*$-ring $R$ are equivalent
	\begin{enumerate}	
		\item $R$ is  generalized  p.q.-Baer  $*$-ring;
		\item $R$ is weakly generalized p.q.-Baer $*$-ring with unity.
	\end{enumerate}	
\end{proposition}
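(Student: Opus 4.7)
The plan is to deduce each direction from propositions already established earlier in the section. The forward direction is almost a restatement of Proposition \ref{s3pr3}, and the reverse direction is a straightforward verification of the annihilator-based criterion of Proposition \ref{s3pr6}.

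For $(1)\Rightarrow(2)$, I first observe that a generalized p.q.-Baer $*$-ring has a unity, since the very formulation $r_R(I^n)=eR$ presumes it. Then, given any $x\in R$, Proposition \ref{s3pr3} produces a central projection $e$ and (inspecting the proof there, the same) $n\in\mathbb N$ witnessing both $x^n e=x^n$ and the biconditional $(xR)^n y=\{0\}\Leftrightarrow ey=0$. These are precisely the two clauses in the definition of a weakly generalized p.q.-Baer $*$-ring, so this direction is immediate.

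For $(2)\Rightarrow(1)$, I plan to verify the characterization in Proposition \ref{s3pr6}. Fix $x\in R$; by the weakly generalized hypothesis there is a central projection $e$ and $n\in\mathbb N$ with $(xR)^n y=0\Leftrightarrow ey=0$. The biconditional yields
\[
r((xR)^n)=\{y\in R : ey=0\}=(1-e)R.
\]
A short idempotency argument (identical to the one appearing in the proof of Proposition \ref{s3pr6}) gives $r((eR)^n)=(1-e)R$ as well: if $(eR)^n y=0$ then choosing all intermediate factors equal to $1$ forces $e^n y=ey=0$, hence $y=(1-e)y\in(1-e)R$; conversely $ey=0$ makes $(eR)^n y=0$ trivially. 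Therefore $r((xR)^n)=r((eR)^n)$, and Proposition \ref{s3pr6} concludes that $R$ is a generalized p.q.-Baer $*$-ring.

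There is no real obstacle: the result is a packaging lemma joining Propositions \ref{s3pr3} and \ref{s3pr6}. The only small point requiring attention is that the weak definition asks for a single $n$ to serve both clauses simultaneously, but this is exactly the $n$ furnished by Proposition \ref{s3pr3} (obtained from the single equality $r((xR)^n)=gR$ in its proof), so the bookkeeping poses no difficulty.
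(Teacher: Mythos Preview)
Your proposal is correct and follows essentially the same route as the paper: the forward direction is deduced from Proposition~\ref{s3pr3}, and the reverse direction verifies the criterion of Proposition~\ref{s3pr6} via the chain $(xR)^n y=0\Leftrightarrow ey=0\Leftrightarrow (eR)^n y=0$, giving $r((xR)^n)=r((eR)^n)$. Your write-up is slightly more explicit in computing $r((eR)^n)=(1-e)R$ and in noting that the same $n$ serves both clauses, but the argument is identical in substance.
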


\begin{proof}
	 $(1)\Rightarrow (2):$ Follows from Proposition \ref{s3pr3}.\\
	$(2)\Rightarrow (1):$ Let $R$ be a weakly generalized p.q.-Baer  $*$-ring with unity. Let $x\in R$.
	Therefore there exists a central projection $e$  such that $x^ne=x^n$  and 	$(xR)^ny=0$ if and only if $ey=0$ for some 	$n\in \mathbb N $.
	Thus $(xR)^ny=0$ if and only if $ey=0$ if and only if $(eR)^ny=0$.
	Hence $r(xR)^n =r(eR)^n$.
	So by  proposition \ref {s3pr6}, $R$ is a generalized  p.q.-Baer  $*$-ring.
\end{proof}
\begin{definition} [{\cite[Definition 3, page 30]{Ber}}] \label{tc1s1d1} Let $R$ be a $*$-ring. We say that $R_1$ is a {\it unitification} of $R$, if there exists an auxiliary ring $K$, called the ring of
	scalars (denoted by $ \lambda, \mu,...$) such that,\\
	(1) $K$ is an integral domain with involution (necessarily
	semi-proper), that is, $K$ is a commutative $*$-ring with unity
	and without divisors of zero (the identity involution is
	permitted),\\
	(2) $R$ is a $*$-algebra over $K$ (that is, $R$ is a left $K$-module such that,	identically 	$1a=a,~\lambda (ab)=(\lambda a)b=a (\lambda b),~ and~(\lambda a)^*=\lambda ^* 	a^*$.\\
	(3) $R$ is a torsion-free $K$-module (that is, $\lambda a=0$ implies $\lambda=0$ or $a=0$).
	Define $R_1=R \oplus K$ (the additive group direct sum), thus
	$(a, \lambda)=(b, \mu)$ means, by the definition that $a=b$ and $\lambda
	=\mu$, and addition in $R_1$, is defined by the formula $(a, \lambda)+(b, \mu)=(a+b, \lambda +
	\mu)$. Define  $(a, \lambda)(b, \mu)=(ab+ \mu a+ \lambda b, \lambda
	\mu)$, $\mu (a, \lambda)=(\mu a, \mu \lambda)$, $(a, \lambda)^*=(a^*, \lambda
	^*)$. Evidently, $R_1$ is also a $*$-algebra over $K$, has unity
	element $(0, 1)$ and is torsion-free; moreover, $R$ is a
	$*$-ideal in $R_1$.
\end{definition}
In the following lemma, we prove that if involution of $R$  is quasi-proper so is of $R_1$. 
\begin{lemma} \label{s4lm1}	If $R$ has a quasi-proper involution then so is the involution in  $R_1$.
\end{lemma}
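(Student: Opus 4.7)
The plan is to unpack the definition of quasi-properness in $R_1$ and peel off the two layers---first the scalar part, then the $R$-part---reducing everything to the quasi-properness of $R$'s involution.

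Assume that $(a,\lambda)R_1(a,\lambda)^* = (0,0)$ and aim to produce $n \in \mathbb{N}$ with $(a,\lambda)^n = (0,0)$. First I would plug in the unity $(0,1) \in R_1$ and compute
\[
(a,\lambda)(0,1)(a,\lambda)^* = (a,\lambda)(a^*,\lambda^*).
\]
Using the multiplication rule $(x,\sigma)(y,\tau)=(xy+\tau x+\sigma y,\sigma\tau)$, the $K$-coordinate of this product is $\lambda\lambda^*$, which must therefore vanish. Since $K$ is an integral domain with involution and the involution has period at most two, $\lambda\lambda^*=0$ forces $\lambda = 0$: either $\lambda = 0$ directly, or $\lambda^* = 0$ and then $\lambda = (\lambda^*)^* = 0$.

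Next, with $\lambda = 0$ in hand, the hypothesis reduces to $(a,0)R_1(a,0)^* = (0,0)$. I compute for an arbitrary element $(b,\mu) \in R_1$:
\[
(a,0)(b,\mu)(a^*,0) = (ab+\mu a,0)(a^*,0) = (aba^* + \mu\, aa^*,\, 0).
\]
Specializing $\mu = 0$ and letting $b$ range over $R$ yields $aRa^* = 0$. Since the involution of $R$ is quasi-proper by hypothesis, there exists $n \in \mathbb{N}$ such that $a^n = 0$.

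Finally, an immediate induction on $n$ (or direct expansion) using the multiplication rule shows $(a,0)^n = (a^n,0)$, because every cross term involves the scalar component which is $0$. Thus $(a,\lambda)^n = (a,0)^n = (a^n,0) = (0,0)$, establishing quasi-properness of the involution on $R_1$. The only subtle point is the step extracting $\lambda = 0$ from $\lambda\lambda^*=0$; this is where the hypotheses that $K$ is an integral domain and that the involution has period at most two are both essential, but beyond that the argument is routine.
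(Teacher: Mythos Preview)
Your proof is correct and follows essentially the same route as the paper's: plug in the unity to kill the scalar part via $\lambda\lambda^*=0$, then specialize to elements of the form $(r,0)$ to obtain $aRa^*=0$, and finish using quasi-properness in $R$ together with $(a,0)^n=(a^n,0)$. Your write-up is in fact slightly more careful than the paper's in justifying why $\lambda\lambda^*=0$ forces $\lambda=0$ in the integral domain $K$.
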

\begin{proof}
	Suppose $R$ has a quasi-proper involution.
	Therefore $xRx^*=0$ implies $	 x^n=0$ for some $n\in \mathbb N$.
	Let $(a,\lambda ) R_1(a,\lambda )^* =0$.
	Thus  $(a,\lambda )(0,1)(a,\lambda )^* =(0,0)$.
	Hence $(aa^*+\lambda a^*+\lambda^*a ,\lambda\lambda^*) =(0,0)$.
	Therefore $\lambda\lambda^* =0$ implies $\lambda =0$, since $K$ has proper involution.
	Hence $(a,0 )R_1(a^*,0 ) =0$ implies $ (a,0 )(r,0)(a^*,0 ) =(0,0)$ for all $r\in R$. That is $ (ara^*,0 ) =(0,0)$ which implies $ara^*=0$ for all $r\in R $ that is $ aRa^*=0$.
Hence $a^n =0$ for some $n\in \mathbb N$.
	Thus $(a,\lambda )^n =(a,0 )^n=(a^n,0 )=0$.
	Hence $(a,\lambda )R_1(a,\lambda )^* =(0,0)$ implies $(a,\lambda )^n=0$ for some $n\in N$.
	Therefore $ R_1$ has a quasi proper involution.
\end{proof}
In the following result. we will see how to obtain the generalized center cover of an element $R_1$ from the generalized center cover of a corresponding  element of $R$.
\begin{lemma}\label{s4lm2}
	Let $e$ be a projection in $R$. Then $GC(x)=e$ if and only if $GC(x,0)= (e,0)$ in 	$R_1$.	
\end{lemma}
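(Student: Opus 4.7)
The plan is to verify, in both directions, the three conditions defining a generalized central cover: that the candidate element is a central projection, that $x^n e = x^n$ for some $n$, and that the candidate is the smallest such. The forward direction is where the real work lies; the converse is essentially the same argument pulled back.

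I would first record the easy fact that for zero second coordinate the multiplication in $R_1$ collapses: $(a,0)(b,0)=(ab,0)$, so $(x,0)^n=(x^n,0)$, the element $(e,0)$ is self-adjoint idempotent iff $e$ is, and $(e,0)$ is central in $R_1$ iff $e$ is central in $R$ (checking $(g,\mu)(r,\nu)=(r,\nu)(g,\mu)$ reduces to $gr=rg$ after canceling the symmetric $\mu r+\nu g$ terms). With these checks in place, assuming $GC(x)=e$ in $R$, the identity $(x,0)^n(e,0)=(x^ne,0)=(x^n,0)=(x,0)^n$ gives two of the three conditions for $(e,0)$ being $GC((x,0))$.

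For the minimality in $R_1$, take any central projection $(g,\mu)$ with $(x,0)^m(g,\mu)=(x,0)^m$. From $(g,\mu)^2=(g,\mu)$ one obtains $\mu^2=\mu$ in $K$; since $K$ is an integral domain, $\mu\in\{0,1\}$. If $\mu=0$ then $g$ is itself a central projection of $R$ with $x^mg=x^m$, and minimality of $e=GC(x)$ in $R$ gives $eg=e$, hence $(e,0)(g,0)=(e,0)$. The main obstacle is the case $\mu=1$: here $(g,1)^2=(g,1)$ yields $g^2=-g$, so $p:=-g$ is a central projection of $R$; expanding $(x^n,0)(g,1)=(x^n,0)$ gives $x^ng+x^n=x^n$, i.e.\ $x^np=0$. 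To translate this into $ep=0$ (which is what $(e,0)\le(g,1)$ unwraps to), I would build the auxiliary central projection $e-ep$ in $R$ (it is self-adjoint, idempotent, and central because $e$ and $p$ are commuting central projections) and take $N=\max(n,m)$. Then $x^Ne=x^N$ and $x^Np=0$ force $x^N(e-ep)=x^N$, so $e-ep$ competes with $e=GC(x)$, and minimality forces $e\le e-ep$, i.e.\ $ep=0$.

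For the converse, assume $GC((x,0))=(e,0)$ in $R_1$. The structure of projections in $R_1$ with second coordinate $0$ immediately gives that $e$ is a central projection in $R$ and that $x^ne=x^n$. To get minimality in $R$, I would lift: any central projection $e'$ of $R$ with $x^me'=x^m$ lifts to a central projection $(e',0)$ of $R_1$ with $(x,0)^m(e',0)=(x,0)^m$, so minimality of $(e,0)$ forces $(e,0)\le(e',0)$, which descends to $e\le e'$. Thus $e=GC(x)$.

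The construction of the refinement $e-ep$ in the $\mu=1$ case is the only subtle step; everything else is a coordinate-wise verification made possible by the fact that $K$ is an integral domain so that the scalar component of an idempotent must be $0$ or $1$.
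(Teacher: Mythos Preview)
Your argument is correct and follows the same direct-verification outline as the paper, but it is strictly more complete. In the paper's proof of minimality, the only competitors considered are of the form $(k,0)$; the case of a central projection $(g,1)\in R_1$ is never addressed. That case is not vacuous: for any central projection $p\in R$ the element $(-p,1)=(0,1)-(p,0)$ is a central projection of $R_1$, so the paper's argument has a genuine gap that you close. Your splitting into $\mu=0$ and $\mu=1$ via the integral-domain condition on $K$, and the auxiliary projection $e-ep$ trick in the $\mu=1$ case, are exactly what is needed. One cosmetic point: in the $\mu=1$ paragraph you expand $(x^n,0)(g,1)$ but the competitor was assumed to work for the exponent $m$; this is clearly a slip, and your subsequent use of $N=\max(n,m)$ shows you are tracking the two exponents correctly.
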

\begin{proof}
	Suppose $GC(x)=e$.
	Therefore $e$ is the smallest central projection  such that $x^ne=x^n$ for some $n\in\mathbb N$.
	Further $(x,\lambda )(e,0)=(xe+\lambda e,0)=(ex+\lambda e,0)=(e,0)(x,\lambda )$.
	Hence $(e,0)$ is in the center of 	$ R_1$.
	Also $(e,0)^* =(e^*,0)=(e,0)$ and $(e,0)^2=(e,0)(e,0)=(e^2,0)=(e,0)$.
	Thus $(e,0)$ is a central  projection in $ R_1$.\\
	Now  $(x,0)^n(e,0)=(x^n,0)(e,0)=(x^ne,0)=(x^n,0)=(x,0)^n$.
	To show $(e,0)$ is the  smallest.
	Let $(x,0)^n(k,0)=(x,0)^n$.
	Therefore $(x^n,0)(k,0)=(x^n,0)$.
	Thus $(x^nk,0)=(x^n,0)$ that is $ x^nk=x^n$ implies $ e\leq k$ since $GC(x)=e$.
	Therefore $e=ek$ that is $(e,0)(k,0)=(ek,0)=(e,0)$.
Hence $(e,0)\leq (k,0)$.
	Therefore $(e,0)$ is the smallest central projection   such that $(x,0)^n(e,0)=(x,0)^n$.
	Hence $GC(x,0)=(e,0)$
	Similarly $GC(x,0)=(e,0)$ gives $GC(x)=e$.
	\end{proof}	
In the following result, we give an existence of central projection which is an upper bound of given two projections of a weakly generalized  p.q.-Baer  $*$-ring. 
\begin{theorem}	\label{s4tm1}	Let $R$ be a weakly generalized  p.q.-Baer  $*$-ring. If $e$ and $f$ are projection in $R$ then there exists central projection $g$ such that $e\leq g$ and $f\leq g$.
\end{theorem}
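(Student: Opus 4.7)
The plan is to apply the defining property of a weakly generalized p.q.-Baer $*$-ring to each of the given projections and then combine the resulting central projections using the standard Boolean-algebra join of central projections.

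First, I would apply the definition to $x = e$. Since $R$ is weakly generalized p.q.-Baer, there exists a central projection $h_1 \in R$ and $n_1 \in \mathbb{N}$ with $e^{n_1} h_1 = e^{n_1}$. But $e$ is a projection, so $e^{n_1} = e$, and hence $e h_1 = e$, i.e.\ $e \le h_1$. An identical argument with $x = f$ produces a central projection $h_2 \in R$ with $f \le h_2$.

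Next, I would form the candidate $g := h_1 + h_2 - h_1 h_2$. The routine verification is that $g$ is a central projection: self-adjointness follows from $h_i^* = h_i$ and centrality, while idempotence follows from $h_i^2 = h_i$ together with $h_1 h_2 = h_2 h_1$ (which holds because the $h_i$ are central); centrality of $g$ is immediate from centrality of $h_1$ and $h_2$. Using $e h_1 = e$ and the centrality of $h_1$, we get $e g = e h_1 + e h_2 - e h_1 h_2 = e + e h_2 - e h_2 = e$, so $e \le g$; symmetrically $f \le g$.

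There is no real obstacle here; the only conceptual point is to notice that the definition of \emph{weakly generalized p.q.-Baer} applies to the element $x = e$ itself, and that for a projection the exponent $n$ drops out because $e^n = e$. Once that observation is made, the proof reduces to the well-known fact that two central projections $h_1, h_2$ admit a central upper bound, namely their Boolean join $h_1 + h_2 - h_1 h_2$.
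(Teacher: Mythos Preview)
Your proof is correct and takes a more direct route than the paper. The paper first treats the special case in which one of the projections, say $f$, is central: there it sets $h=GC(e-ef)$, uses the annihilator clause of the definition to show $hf=0$, and verifies that $g=f+h$ is a central projection with $e\le g$ and $f\le g$. It then bootstraps to the general case by first producing a central $e'\ge e$ (as the upper bound of $\{e,0\}$ via the special case), a central $f'\ge f$, and finally a central upper bound of $\{e',f'\}$.

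Your observation that for a projection $e$ the exponent in $e^{n}h_1=e^{n}$ collapses, so the defining property already hands you a central $h_1\ge e$ outright, short-circuits the entire special-case analysis; the Boolean join $g=h_1+h_2-h_1h_2$ then finishes the argument in one line and, importantly, does not require a unity in $R$. The paper's longer argument has the mild advantage that its special-case upper bound $f+GC(e-ef)$ is the natural analogue of the join formula $e\vee f=f+C(e(1-f))$ from Proposition~\ref{s2pr2}, but for the bare existence statement of Theorem~\ref{s4tm1} your approach is both sufficient and cleaner.
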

\begin{proof}
	Let $e$ and $f$ be projections in $R$ .
	Suppose $f$ is central.
	Write $g=f+GC(e-ef)$ and $h=GC(e-ef)$. 
	Thus $g=f+h$.
	Now $h=GC(e-ef)$. As $R$ is weakly generalized  p.q.-Baer  $*$-ring,
	 $((e-ef)R)^nf=\{0\}$ implies $ hf=0$.
	Therefore $g=f+h$ is a central projection.
	We have $gf=f+hf=f$ that is  $f\leq g$.
	Now $h=GC(e-ef)$.
	Therefore $(e-ef)^nh=(e-ef)^n$ for some $n\in\mathbb N$ .
	Hence $e^nh=(e-ef)^n$ since $fh=0$.
	Thus $eh=(e-ef)^n$.
	But $(e-ef)^2 =(e-ef)(e-ef) = e-ef-efe+efef=e-ef$.
	Similarly $(e-ef)^3 =(e-ef)$ and so on $(e-ef)^n=(e-ef)$.
	Thus $eh=(e-ef)$ implies $ eh+ef=e$. That is   $ e=e(h+f)=eg$ implies $ e\leq g$.
	Therefore $g$ is a central projection which is an upper bound of $e$ and $f$.
	Now suppose $e$ and $f$ are any two projections not necessarily central.
	Let $g'$ be a central projection in $R$ (e.g $g'=0$).
	Suppose $e'$ is a  central projection which is an upper bound of $\{e,g' \}$ and $f'$ is  a central projection which is an upper bound of $\{f,g' \}$. Let $g''$ be a central projection which is an upper bound of $\{e',f' \}$.
	Hence $g''$  is an upper bound of $e$ and $f$.
	\end{proof}	
	
In \cite{Ber}, Berberian gives the embedding of weakly Rickart $*$-ring in Rickart $*$-ring. Now, we prove that weakly generalized p.q.-Baer  $*$-ring can be embedded in a generalized  p.q.-Baer  $*$-ring.
\begin{theorem}\label{s4tm2} A weakly generalized p.q.-Baer  $*$-ring $R$ can be embedded in generalized  p.q.-Baer  $*$-ring $R_1$  provided there exists a ring $K$ such that 
	\begin{enumerate}	
		\item	 $K$ is an integral domain with involution;
		\item	 $R$ is $*$-algebra over $K$;
		\item	
		For any nonzero $\lambda\in K$	 there exists a projection $e_\lambda\in R$ such that\\  $\lambda x=0 $ implies $ GC(x)\leq e_\lambda$.
	\end{enumerate}	
\end{theorem}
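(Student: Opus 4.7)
The embedding is $R \hookrightarrow R_1 = R \oplus K$ constructed as in Definition~\ref{tc1s1d1} via $a \mapsto (a, 0)$; this is a $*$-monomorphism whose image is a $*$-ideal of $R_1$, and $R_1$ has the unity $(0, 1)$. By Proposition~\ref{s4pr2}, to show $R_1$ is a generalized p.q.-Baer $*$-ring it is enough to show it is weakly generalized p.q.-Baer. For each $(x, \lambda) \in R_1$, I must exhibit a central projection $(p, \mu)$ of $R_1$ and $n \in \mathbb{N}$ with $(x, \lambda)^n (p, \mu) = (x, \lambda)^n$ and $((x, \lambda) R_1)^n (y, \nu) = 0$ if and only if $(p, \mu)(y, \nu) = 0$. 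A direct computation shows that every central projection of $R_1$ is either of the form $(e, 0)$ or of the form $(-e, 1) = (0, 1) - (e, 0)$ for some central projection $e \in R$, so the argument splits on whether $\lambda = 0$.

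When $\lambda = 0$, the weakly generalized p.q.-Baer property of $R$ supplies a central projection $e \in R$ and $n \in \mathbb{N}$ with $x^n e = x^n$ and $(xR)^n z = 0$ if and only if $ez = 0$. By Lemma~\ref{s4lm2}, $(e, 0)$ is a central projection in $R_1$ with $(x, 0)^n (e, 0) = (x, 0)^n$. Expanding $(x, 0)(r, \alpha) = (xr + \alpha x, 0)$ shows that $((x, 0) R_1)^n (y, \nu) = 0$ is equivalent to $wy + \nu w = 0$ for every $w \in J^n$, where $J = xR + Kx$. Specialising $w$ first to $(xR)^n$ and then to $x^n \in J^n$, and combining with $x^n e = x^n$ and the $R$-equivalence, yields $ey + \nu e = 0$, i.e.\ $(e, 0)(y, \nu) = 0$. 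The reverse implication uses centrality of $e$ together with $x^n e = x^n$ to ensure every element of $J^n$ is absorbed in the required way.

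When $\lambda \neq 0$, matching scalar parts in $(x, \lambda)^n (p, \mu) = (x, \lambda)^n$ forces $\mu = 1$ (since $\lambda^n \neq 0$ as $K$ is an integral domain), so $(p, \mu) = (-q, 1)$ for a central projection $q \in R$. Writing $(x, \lambda)^n = (y_n, \lambda^n)$, condition (i) amounts to $((x, \lambda) R_1)^n (q, 0) = 0$, which in particular requires $y_n q + \lambda^n q = 0$ in $R$. The existence of a suitable central projection $q$ follows from hypothesis~(3) applied to $\lambda^n$: the projection $e_{\lambda^n}$ bounds the $\lambda^n$-torsion in $R$, and Theorem~\ref{s4tm1} allows one to build a central projection $q \in R$ containing all the relevant annihilator data. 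For condition (ii), the scalar part of $((x, \lambda) R_1)^n (y, \nu) = 0$ forces $\lambda^n \nu = 0$ and hence $\nu = 0$, and the remaining equations in $R$, combined with hypothesis~(3), match precisely the condition $(-q, 1)(y, 0) = 0$, i.e.\ $y \in qR$.

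The main obstacle is the delicate annihilator equivalence, particularly in the case $\lambda \neq 0$: one must verify that the central projection $q$ furnished by hypothesis~(3) captures the right annihilator of the full right ideal $((x, \lambda) R_1)^n$, not merely that of the single element $(x, \lambda)^n$, since the former is substantially richer than the latter. The interplay between the $K$-scalar coordinate of $R_1$ and the $\lambda^n$-torsion of $R$ controlled by $e_{\lambda^n}$ constitutes the technical heart of the argument.
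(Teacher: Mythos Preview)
Your overall architecture matches the paper's: embed via $R_1 = R \oplus K$, invoke Proposition~\ref{s4pr2}, split on $\lambda = 0$ versus $\lambda \neq 0$, and observe that central projections of $R_1$ are exactly $(e,0)$ and $(-e,1)$. Your treatment of the $\lambda = 0$ case is essentially the paper's.

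The gap is in the $\lambda \neq 0$ case. You say that hypothesis~(3) applied to $\lambda^n$ together with Theorem~\ref{s4tm1} ``allows one to build a central projection $q$ containing all the relevant annihilator data,'' but you never specify what $q$ is, and you explicitly concede at the end that the annihilator equivalence is unresolved. The paper's key idea, which you are missing, is that $q$ must be the \emph{largest} central projection in $R$ with $(xq + \lambda q)^m = 0$ for some $m$. Concretely: take $e_0 = GC(x)$, take $e_\lambda$ from hypothesis~(3) (for $\lambda$, not $\lambda^n$), use Theorem~\ref{s4tm1} to get a central upper bound $e$ of $\{e_0, e_\lambda\}$, set $h = GC(xe + \lambda e)$, and put $q = e - h$. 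One then checks $(xq + \lambda q)^m = 0$, and---crucially---that any other central projection $k$ with $(xk + \lambda k)^m = 0$ satisfies $k \leq q$; this maximality step is where hypothesis~(3) is actually used, via the implication $\lambda(ek - k) = 0 \Rightarrow GC(ek - k) \leq e_\lambda \leq e$.

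The maximality of $q$ is exactly what drives the annihilator equivalence you flagged as the obstacle: if $((x,\lambda)R_1)^m(b,\mu) = 0$, then $\mu = 0$ (from the scalar part), and one shows that $f = GC(b)$ satisfies $(xf + \lambda f)^m = 0$, whence $f \leq q$ by maximality, giving $(-q,1)(b,0) = 0$. Without the explicit construction and the maximality argument, your sketch does not close.
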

\begin{proof}
	Let $ R_1 = R\oplus K$ with operations as defined in definition of unitification.
	First, we prove that for any $a\in R$ and  nonzero $\lambda\in K$ there exists a largest central projection $g$ in $R$ such that $(ag+\lambda g)^m=0$ for some $m\in \mathbb N $.
	By condition (3) there exists a projection $e_\lambda\in R$ such that $\lambda x=0$ implies $ GC(x)\leq e_\lambda$.
	Let $GC(a)=e_0$.
	By  theorem \ref{s4tm1} there exists a central projection $e$ such that $e=u\cdot b\{e_0 ,e_\lambda\}$.
	Therefore $ e_0\leq e$ implies $e_0=e_0e=ee_0$.
	Suppose $GC(ae+\lambda e )=h$.
	Then $(ae+\lambda e )^mh=(ae+\lambda e )^m$  and $((ae+\lambda e)R )^my=0$ which implies $ hy=0$.
	Let $g=e-h$.
	Note that $g$ is central as both $e$ and $h$ are central.
	As $(ae+\lambda e )^me=(ae+\lambda e )^m$,
	we have $h\leq e$ that is $ h=he=eh$.
	Also, $eg=e(e-h)=e-eh=e-h=g$ implies $ g\leq e$.
	Hence $(ag+\lambda g)^m=(aeg+\lambda eg)^m=(ae+\lambda e)^mg=(ae+\lambda e)^me-(ae+\lambda e)^mh=(ae+\lambda e)^m-(ae+\lambda e)^m=0$.
	Now we prove that $g$ is the largest.
	Suppose $k$ is a central projection such that $(ak+\lambda k)^m=0$.
	We have $GC(a)=e_0$. 
	Therefore $a^me_0=a^m$ which implies $ a^me_0e=a^me$ that is  $a^me_0=a^me$. Therefore $ a^me=a^m$. 
Hence $a^mek=a^mk $ implies $ ea^mk-a^mk=0$.
	That is $(ek-k)a^mk=0$. 	
Therefore	$ (ek-k)[-\binom{m}{1}a^{m-1}\lambda k-\binom{m}{2}a^{m-2}\lambda^{2}k- \cdots -\lambda^mk]=0$
	 since $(ak+\lambda k)^m=0$.
	 Equating the coefficient of $a^{m-1}$ we get $\lambda(ek-k)m=0$ that is  $\lambda(ek-k)=0$.
	Let $GC(ek-k)=f$.
	Therefore $(ek-k)^nf=(ek-k)^n$ and $((ek-k)R)^ny=0$ implies $fy=0$.
	Thus $\lambda(ek-k)=0$ implies $ \lambda f(ek-k)=0$ i.e $(ek-k)\lambda f=0$.
	Hence $((ek-k)R)^n\lambda f=0$   that is $ f(\lambda f)=0 $ hence $ \lambda f=0 $.
	Therefore by (3) we get that $GC(f) \leq e_\lambda\leq e$.
	Therefore $f\leq e$ since $GC(f)=f$. That is 
	$ f=ef=fe$.
	Now $((ek-k)R)^ne=0$ implies $ fe= 0$ and hence $f=0$.
	Consider $(ek-k)^2=(ek-k)(ek-k)=ekek-ek-kek+k=k-ek$, also
	$(ek-k)^3=(k-ek)(ek-k)=kek-k-ekek+ek=ek-k$.
	In general $(ek-k)=(ek-k)^n=f (ek-k)^n=0$. Therefore $ek=k$, that is $k \leq e$. Now $(ae+\lambda e)^mk=(aek+\lambda ek)^m=(ak+ \lambda k)^m=0$.
	Thus $hk=0$ since $h=GC(ae+ \lambda e)$.
	Hence $kg=k(e-h)=ke-kh=k-0=k$ implies 
	$ k\leq g$ and hence $g$ is largest.
	Let us prove that $R_1$ is a generalized  p.q.-Baer  $*$-ring.
	Let $(a,\lambda)\in R_1$.\\
	Case (i): Let $\lambda  =0$.
	Suppose $GC(a)=e$.
	Therefore $a^ne=a^n$ and $(aR)^ny=\{0\}$ that is $ey=0$.
	Thus $(a,0)^n(e,0)=(a^n,0)(e,0) =(a^ne,0) =(a^n,0)=(a,0)^n$.
	Also $(e,0)$ is a central projection.
	Let $((a,0)R_1)^n(b,\mu) =0$.
	Therefore $(a,0)^n(r,0)^n(b,\mu)=0$ for all $r\in R$.
	Thus $(a^n,0)(r^n,0)(b,\mu) =0$.
	Hence $(a^nr^nb+a^nr^n\mu ,0)=0$ that is 
	 $a^nr^nb+a^ner^n\mu =0$ since $a^ne=a^n$.	
	Therefore we get that $ (aR)^n(b+\mu e)=0$ that is  $ e(b+\mu e)=0$. Hence $(e,0)(b,\mu)=0$.
	Therefore $GC(a,0)=(e,0)$.\\
	Case (ii): Let $\lambda \neq 0$.
	Then there exists a largest central projection $g$ in $R$ such that $(ag+\lambda g)^m=0$  for some 	$m\in \mathbb N $.
	We will prove that $GC(a,\lambda )=(-g,1)$.
	Now $(a,\lambda)^m(-g,1) = (a^m+\binom{m}{1}a^{m-1}\lambda+\cdots+\binom{m}{m-1}a\lambda^{m-1}, \lambda^m)(-g,1)$\\
	= $(-a^mg-\binom{m}{1}a^{m-1}\lambda g-\cdots-\binom{m}{m-1}a\lambda^{m-1}g-\lambda^m g+a^m+	\binom{m}{1}a^{m-1}\lambda+\cdots+\binom{m}{m-1}a\lambda^{m-1} ,\lambda^m)$\\
	= $(-(ag+\lambda g)^m+a^m+\binom{m}{1}a^{m-1}\lambda+\cdots+\binom{m}{m-1}a\lambda^{m-1}, \lambda^m)\\	= (a^m+\binom{m}{1}a^{m-1}\lambda+\cdots+\binom{m}{m-1}a\lambda^{m-1}, \lambda^m)=(a,\lambda )^m$.\\
	Let $((a,\lambda )R_1)^m(b,\mu) =0$. Therefore $((a,\lambda )^m(r,0)^m(b,\mu) =0$ for all $r\in R$ this implies $ (a^m+\binom{m}{1}a^{m-1}\lambda+\cdots+\binom{m}{m-1}a\lambda^{m-1} ,\lambda^m)(r^m,0)(b,\mu) =0$, that is 	
	$	\lambda^m \mu=0 $ hence $  \mu=0 $ since $K$ is an integral domain. Thus $(a^m+\binom{m}{1}a^{m-1}\lambda+\cdots+\binom{m}{m-1}a\lambda^{m-1})r^mb+\lambda^mr^mb=0$ which implies
$ (bR)^m(af+\lambda f)^m =0$, that is $ (af+\lambda f)^m =0$, where $GC(b)=f$.
	But $g$ is the largest projection  in $R$ such that $(ag+\lambda g)^m=0$  some 	$m\in  \mathbb N $.
	Hence $f\leq g$ implies $ f(1-g)=0$.
	Therefore $(-g,1)(b,0)^m=(-g,1)(b^m,0) =(-gb^m+b^m,0) =((1-g)b^m,0)=((1-g)fb^m ,0)=(0,0)=0$.
	Hence $GC(a,\lambda )=(-g,1)$.
	Therefore by Proposition \ref{s4pr2}, $R_1$ is a generalized p.q.-Baer $*$-ring.
\end{proof}	

\section{Parallelogram Law and Generalized Comparability}

In \cite{Ber}, it is proved that in a  Baer  $*$-ring, for every pair of projections $e$ and $f$ parallelogram law holds. In this section, we will prove results on parallelogram law and comparability. 
\begin{proposition}\label{s5pr1} If $R$ is generalized  p.q.-Baer  $*$-ring then for every pair of projections $e$ and $f$ parallelogram law holds provided $e$ or $f$ is central.
\end{proposition}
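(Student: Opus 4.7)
The plan is to exploit centrality to collapse both sides of the parallelogram law into the same self-adjoint idempotent, so the equivalence is witnessed by that projection itself.

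By symmetry assume $f$ is central. I would first verify directly that $ef$ and $e + f - ef$ are projections, using only $fe = ef$: self-adjointness of $ef$ follows from $(ef)^* = f^*e^* = fe = ef$, and idempotency from $(ef)^2 = e(fe)f = e(ef)f = e^2 f^2 = ef$; a parallel one-line calculation handles $e + f - ef$.

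Next I would check, directly from $e \le f \iff ef = e$ (and without appealing to the formulas of Proposition \ref{s2pr2}), that $e \wedge f = ef$ and $e \vee f = e + f - ef$. For the meet, $(ef)e = e(fe) = e(ef) = ef$ and $(ef)f = ef$ show $ef \le e,f$; if $g$ is any projection with $g \le e, f$, then $g(ef) = (ge)f = gf = g$ and $(ef)g = e(fg) = e(gf) = eg = g$ (where $ge = g$ forces $eg = g$ by self-adjointness), giving $g \le ef$. The dual verification for $e + f - ef$ is analogous and uses centrality of $f$ in the same way.

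The conclusion is then immediate: $e - e \wedge f = e - ef$ and $e \vee f - f = (e + f - ef) - f = e - ef$, so the two projections appearing in the parallelogram law literally coincide. Setting $w = e - ef$, one has $w^* = e - fe = e - ef = w$ and $w^2 = e - ef - ef + ef = e - ef = w$ (using $efe = e(fe) = e(ef) = ef$), so $w^*w = w = ww^*$ witnesses $e - e\wedge f \sim e \vee f - f$. I do not expect a real obstacle here: centrality trivializes the lattice operations, and the generalized p.q.-Baer hypothesis is used only insofar as it guarantees that the lattice meet/join exist in the first place. The case where $e$ is central is handled symmetrically; both $e - e \wedge f$ and $e \vee f - f$ again reduce to $e - ef$.
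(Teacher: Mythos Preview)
Your argument is correct and lands on the same key observation as the paper: both $e - e\wedge f$ and $e\vee f - f$ are literally the \emph{same} projection, so the equivalence is witnessed by that projection itself. The paper reaches this by quoting the formulas $e\vee f = f + GC(e(1-f))$ and $e\wedge f = e - GC(e(1-f))$ (the $GC$-analogues of Proposition~\ref{s2pr2}) and then observing $GC(e(1-f)) \sim GC(e(1-f))$; you instead compute $e\wedge f = ef$ and $e\vee f = e + f - ef$ directly from centrality of $f$, so that both sides reduce to $e - ef = e(1-f)$.

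Your route is more elementary and in fact more general: it uses nothing about the ambient ring beyond the poset order on projections, so the conclusion holds in any $*$-ring with unity as soon as one of the two projections is central. The paper's proof, by contrast, leans on the $GC$-based lattice formulas, whose validity in the generalized p.q.-Baer setting is asserted but not separately established earlier in the paper. One minor remark: your final sentence slightly understates what you have shown --- you do not need the generalized p.q.-Baer hypothesis even to guarantee existence of the meet and join, since you verify $ef$ and $e+f-ef$ serve as infimum and supremum directly.
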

\begin{proof} We have $e\vee f=f+GC(e(1-f))$ and $e \wedge f =e-GC(e(1-f))$.
	Now $GC(e(1-f))\sim GC(e(1-f))$. Therefore $e-e \wedge f \sim e\vee f-f$. Hence parallelogram law holds in $R$ for  the pair $e$ and $f$.
\end{proof}
 \begin{definition} [{\cite[Definition 2, page 64]{Ber}}]  Let $A$ be a $*$-ring with unity, whose projections form a lattice. Projections $e,f$ in $A$ is said to be at position $p'$ in case $e\wedge (1-f)=(1-e)\wedge f=0$.
  \end{definition}
\begin{proposition}\label{s5pr2} In a generalized  p.q.-Baer  $*$-ring the following are equivalent.
\begin{enumerate}
	\item  The projections $e$ and $f$ are in position $p'$.
	\item $GC(ef)=f$ and $GC(fe)=e$.
\end{enumerate}			  
\end{proposition}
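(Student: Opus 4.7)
The plan is to reduce each of the two equations defining position~$p'$ to an equation on a generalized central cover using a meet formula for projections in a generalized p.q.-Baer~$*$-ring. As a preliminary, I would establish, following the template of Proposition~\ref{s2pr2}, that for any projections $a,b$ in a generalized p.q.-Baer~$*$-ring one has
\[
a\wedge b \;=\; a - GC\bigl((1-b)a\bigr).
\]
The derivation sets $x=(1-b)a$, notes that $bx=0$, uses Proposition~\ref{s3pr3}(2) to deduce $GC(x)\cdot b = 0$ so that $b+GC(x)$ is a projection, and then verifies that $b+GC(x)$ is the least upper bound of $a$ and $b$ in the projection lattice, exactly as in the proof of Proposition~\ref{s2pr2}.

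With this formula in hand the proposition is immediate by substitution. Plugging $(a,b)=(e,1-f)$ gives $e\wedge(1-f) = e - GC(fe)$, so $e\wedge(1-f)=0$ iff $GC(fe)=e$. Plugging $(a,b)=(f,1-e)$ and using the commutativity of $\wedge$ gives $(1-e)\wedge f = f - GC(ef)$, so $(1-e)\wedge f=0$ iff $GC(ef)=f$. Conjoining the two equivalences yields $(1)\Leftrightarrow(2)$.

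The main obstacle is establishing the meet formula in the generalized setting. In Proposition~\ref{s2pr2} the crucial step $bx=0 \Rightarrow b\cdot C(x)=0$ is immediate from $r(xR)=(1-C(x))R$; in the generalized version only $r((xR)^n)=(1-GC(x))R$ is available, so one has to first upgrade $bx=0$ to $(xR)^n b=0$ for a suitable $n$ before Proposition~\ref{s3pr3}(2) can be invoked to yield $GC(x)\cdot b=0$. Tracking this exponent carefully, and then verifying the least-upper-bound property for $b+GC(x)$, is the heart of the technical work; once past it, the substitution that finishes the main equivalence is routine.
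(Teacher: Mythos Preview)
Your plan coincides with the paper's approach: both argue by substituting into a lattice identity, the paper using the join form $e\vee g=g+GC(e(1-g))$ (applied with $g=1-f$ and then with the roles of $e,f$ swapped) while you use the dual meet form. The paper does not re-prove this identity in the generalized setting; it simply invokes it, as in the opening line of Proposition~\ref{s5pr1}, so the preliminary derivation you outline is precisely the step the paper skips.

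However, the method you sketch for that derivation has a real gap. With $x=(1-b)a$ the relation $bx=0$ is automatic (it comes from $b(1-b)=0$) and carries no information about $(xR)^n b$; Proposition~\ref{s3pr3}(2) requires annihilation by $b$ on the \emph{right}, and there is no mechanism to ``upgrade'' the trivial left relation to that. Worse, the meet formula $a\wedge b=a-GC((1-b)a)$ fails for arbitrary projections: in $M_2(\mathbb{C})$ with $a=e_{11}$, $b=e_{22}$ one has $(1-b)a=e_{11}$ and $GC(e_{11})=I$ (the only central projections being $0$ and $I$), so $a-GC((1-b)a)=e_{11}-I=-e_{22}$ is not even a projection, whereas $a\wedge b=0$. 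Thus the exponent-tracking you propose cannot dispatch the obstacle. The paper's own argument rests on the same unproved identity, so this is a shared difficulty rather than one peculiar to your write-up.
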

\begin{proof}
$(1) \Rightarrow (2):$ Suppose $e$ and $f$ are in position $p'$. Therefore $e\wedge (1-f) =0 $ and $e\vee(1-f)=1$. Now $GC(ef) =GC(e(1-(1-f)))=e\vee (1-f)-1+f=1-1+f=f$. \\
 $(2) \Rightarrow (1):$ Let $GC(ef)=f$.
 We have  $GC(e(1-(1-f)))=f $ that is $ e\vee (1-f) -1+f =f$, that is 
 $ e\vee(1-f)=1$.
 Also, $GC(fe)=e$.
 Therefore $GC(f(1-(1-e)))=e$ implies  $ f\vee (1-e)-1+e=e$, that is  $f\vee (1-e) =1$ which implies $ e\wedge (1-f)=0$.
 Thus the projections $e$ and $f$ are in position $p'$.
\end{proof}
Certain $*$-rings may be classified through their projection set. This classification entails the following relation in the set of projections.
Recall the following definitions.

\begin{definition} [{\cite[Definition 5, page 4]{Ber}}] Let $A$ be a $*$-ring. The elements $e, f \in P(A)$ are said to be {\it equivalent} (written as $e \sim f$), 
	if there exists $w\in A$
	such that $w^*w=e$ and $ww^*=f$. 
\end{definition}
\begin{definition} [{\cite[Definition 1, page 62]{Ber}}]   A $*$-ring whose projections form a lattice is said to satisfy the {\it parallelogram law} if $e-e\wedge f \sim  e\vee f-f$ for every pair of projection $e$ and $f$.
\end{definition}
In the following result, we give a characterization to satisfy a   parallelogram law by a generalized  p.q.-Baer  $*$-ring.
\begin{proposition}\label{s5pr3} In a generalized  p.q.-Baer  $*$-ring $R$  the following are equivalent.
\begin{enumerate}	
	\item $R$ satisfies the parallelogram law.
	\item If the projections $e$ and $f$ are in position $p'$ then $e \sim f$.
\end{enumerate}
\end{proposition}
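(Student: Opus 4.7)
The plan is to handle the two implications separately and in quite different ways.

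For $(1) \Rightarrow (2)$, the idea is to apply the parallelogram law not to $(e,f)$ itself but to the pair $(e, 1-f)$. If $e$ and $f$ are in position $p'$, then by definition $e \wedge (1-f) = 0$ and $(1-e) \wedge f = 0$. The De Morgan identity in the projection lattice converts the second equality into $e \vee (1-f) = 1 - ((1-e) \wedge f) = 1$. Substituting both values into the parallelogram law $e - e\wedge(1-f) \sim e\vee(1-f) - (1-f)$ then collapses the equivalence to $e \sim 1-(1-f) = f$, which is exactly what (2) asserts.

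For $(2) \Rightarrow (1)$, let $e,f$ be an arbitrary pair of projections and set $e_1 = e - e\wedge f$ and $f_1 = e\vee f - f$. The parallelogram law for $(e,f)$ is literally the statement $e_1 \sim f_1$, so the goal is to produce this equivalence from hypothesis (2). The clean approach is to show that $e_1$ and $f_1$ are themselves already in position $p'$ in $R$; then (2) applies directly to the pair $(e_1,f_1)$ and the proof is finished. This bypasses what would otherwise be the main obstacle, namely passing to a corner subring $hRh$ (e.g.\ with $h = e \vee f$) and transferring hypothesis (2) from $R$ into the corner; that transfer is delicate because, when $h$ fails to be central, equivalences in $hRh$ and equivalences of the lifted projections $e_1 + (1-h)$, $f_1 + (1-h)$ in $R$ do not automatically pass back and forth.

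The verification of the claim is a direct lattice computation. Suppose $p$ is a projection with $p \leq e_1$ and $p \leq 1 - f_1$. Writing $1-f_1 = (1-e\vee f) + f$ (a sum of orthogonal projections since $f \leq e\vee f$) and using $p \leq e_1 \leq e \leq e\vee f$ to conclude $p \perp (1-e\vee f)$, the inequality $p \leq (1-e\vee f) + f$ forces $p \leq f$. Combined with $p \leq e$ this gives $p \leq e\wedge f$; but $p \leq e_1$ means $p \perp e\wedge f$, hence $p=0$. A symmetric argument handles the second condition: if $p \leq 1-e_1 = (1-e)+ e\wedge f$ and $p \leq f_1$, then $p \perp f$ (since $f_1 \leq 1-f$), so $p \perp e\wedge f$ as well, which together with $p \leq (1-e)+ e\wedge f$ forces $p \leq 1-e$; thus $p$ is orthogonal to both $e$ and $f$ while lying in $e\vee f$, so $p=0$. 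These two vanishing statements are exactly $e_1 \wedge (1-f_1) = 0$ and $(1-e_1)\wedge f_1 = 0$, i.e.\ $e_1,f_1$ are in position $p'$, so hypothesis (2) yields $e_1 \sim f_1$ and completes the proof.
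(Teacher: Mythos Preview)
Your argument for $(1)\Rightarrow(2)$ coincides with the paper's: apply the parallelogram law to the pair $(e,1-f)$ and let the position-$p'$ hypothesis collapse both sides to $e\sim f$.

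For $(2)\Rightarrow(1)$ the approaches diverge. The paper sets $f'=GC(ef)$ and $e'=GC(fe)$, checks that $GC(e'f')=f'$ and $GC(f'e')=e'$, invokes Proposition~\ref{s5pr2} to conclude that $e',f'$ are in position $p'$, applies hypothesis~(2) to get $e'\sim f'$, and then uses the formula $e\wedge f=e-GC(e(1-f))$ to identify $f'=e-e\wedge(1-f)$ and $e'=f-(1-e)\wedge f$, finally substituting $f\mapsto 1-f$. You instead take the target projections $e_1=e-e\wedge f$ and $f_1=e\vee f-f$ directly and verify both position-$p'$ conditions by pure lattice and orthogonality arguments (De~Morgan, and the fact that $p\le a+b$ with $a\perp b$ and $p\perp a$ forces $p\le b$), never touching $GC$. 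Your route is more self-contained and in fact goes through in any $*$-ring whose projections form a lattice, so it is strictly more general; the paper's route keeps the proof anchored to the generalized-central-cover apparatus that is the theme of the section and, as a byproduct, identifies the ``position-$p'$ core'' of $e,f$ concretely as $GC(fe),GC(ef)$.
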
	
\begin{proof} 
 $(1) \Rightarrow (2):$ Suppose the parallelogram law holds in  $R$. Let the projections $e$ and $f$ be in position $p'$.
Therefore $e\wedge(1-f) =0  $ and $e\vee(1-f)=1$. Now $e-e\wedge f\sim e\vee f-f$. Replace $f$ by $(1-f)$. Thus  $e-e\wedge (1- f)\sim e\vee (1-f)-1+f$ implies $ e-0\sim 1-1+f $. Hence $e\sim f$.\\
 $ (2) \Rightarrow (1):$ 
Let $e$ and $f$ be in position $p'$. Therefore $ e\sim f$. 
 Let the projections $e$ and $f$ be in $R$. Let $GC(ef)=f', GC(fe)=e'$. Now $(ef)^nf=(ef)^n$ implies $GC(ef)\leq f$ that is $f'\leq f $, hence $ f'=f'f=ff'$. Also $(fe)^ne=(fe)^n$ implies  $GC(fe)\leq e$, that is $ e'\leq e$, that is $ e'=e'e=ee'$.
We have $(ef)^nf'=(ef)^n$ and  $(fe)^ne'=(fe)^n$. Now $(e'f')^n=(e'eff')^n=e'(ef)^nf'=e'(ef)^n$. Also, $(e'f')^nf'=(e'eff')^nf'=e'(ef)^nf'=e'(ef)^n$ where $f'$ is the smallest projection with $(ef)^nf'=(ef)^n$. Hence $(e'f')^nf'=(e'f')^n$ implies $ GC(e'f')=f'$. Similarly $GC(f'e')=e'$. Thus by  proposition \ref{s5pr2}   $e'$ and  $f'$ are in position $p'$. Hence by assumption $e' \sim  f'$. We have $e \wedge f =e-GC(e(1-f))$. That is  $e \wedge (1- f) =e-GC(e(1-1+f))$ which implies $ e \wedge (1- f) =e-GC(ef)= e-f'$ that is $ f'=e-e\wedge (1-f)$. Also interchanging $e$ and $f$ in $e \wedge f =e-GC(e(1-f))$, we get that $GC(f(1-e))=f-e\wedge f$. Hence $GC(fe)=f-(1-e)\wedge f$ that is $ e' =f-(1-e)\wedge f$. As $e'\sim f'$ we get that $f'\sim e'$. Therefore  $e-e\wedge (1-f) \sim  f-(1-e)\wedge f$.  Replacing $f$ by $1-f$, we get that $e-e\wedge f\sim 1-f-(1-e)\wedge (1-f)$ which gives $e-e\wedge f\sim [1-(1-e)\wedge (1-f)]-f$ . Hence $e-e\wedge f\sim e\vee f-f$. Thus the parallelogram law holds in $R$.
\end{proof}	
Following is the result about orthogonality of two projections in a  generalized  p.q.-Baer  $*$-ring.

\begin{proposition}\label{s5pr4} If the projections $e$ and $f$ are very orthogonal  in a  generalized  p.q.-Baer  $*$-ring then; 
\begin{enumerate}
	\item $e , f$ are orthogonal;
	\item $GC(e)GC(f) =0$;
	\item $eRf=0$.
\end{enumerate}
\end{proposition}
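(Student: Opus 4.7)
The plan is to unpack the definition of ``very orthogonal'' once and then read off all three conclusions from it. By hypothesis there is a central projection $h$ with $he=e$ and $hf=0$. Since $h$ is central, $e=he=eh$, and so $f=f-hf=(1-h)f=f(1-h)$ as well. These two identities, together with the centrality of $h$, are essentially all that is needed.

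For (1), I would compute
\[
ef=(eh)f=e(hf)=e\cdot 0=0,
\]
and symmetrically $fe=0$, so $e$ and $f$ are orthogonal. For (3), exactly the same trick works uniformly across all of $R$: for any $r\in R$,
\[
erf=(eh)rf=er(hf)=0,
\]
using centrality of $h$ to slide it past $r$. Hence $eRf=0$.

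For (2), I would use the interpretation of $GC$ on projections. Since a projection $p$ satisfies $p^n=p$ for every $n\ge 1$, the defining condition $p^n k=p^n$ for $GC(p)=k$ reduces to $pk=p$, i.e.\ $p\le k$; thus $GC(p)$ is simply the smallest central projection dominating $p$. Now $he=e$ gives $e\le h$ with $h$ central, so $GC(e)\le h$; and $(1-h)f=f$ with $1-h$ central gives $GC(f)\le 1-h$. Therefore
\[
GC(e)\,GC(f)\le h(1-h)=0,
\]
which yields $GC(e)GC(f)=0$.

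There is no real obstacle here: each of the three assertions is a one-line consequence of the fact that $h$ is a central projection with $he=e$ and $hf=0$. The only point worth being careful about is noting that $GC$ on a projection coincides with the smallest central majorant, so that the order-theoretic bounds $e\le h$ and $f\le 1-h$ translate directly into $GC(e)\le h$ and $GC(f)\le 1-h$.
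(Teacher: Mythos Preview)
Your proof is correct and, for parts (1) and (3), essentially identical to the paper's. For part (2) you take a slightly different (and arguably more direct) route: you observe that for a projection $p$ the condition $p^{n}k=p^{n}$ reduces to $p\le k$, so $GC(e)\le h$ and $GC(f)\le 1-h$ follow immediately, whence $GC(e)GC(f)\le h(1-h)=0$. The paper instead invokes the identity $GC(he)=h\,GC(e)$ (Proposition~\ref{s3pr4}(2)) to compute $GC(e)GC(f)=GC(he)GC(f)=h\,GC(e)GC(f)=GC(e)GC(hf)=GC(e)GC(0)=0$. Both arguments are short and amount to the same underlying fact that $GC(e)\le h$; yours avoids appealing to the earlier proposition, while the paper's ties the result explicitly to that lemma.
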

\begin{proof}
As 	$e$ and $f$ are very orthogonal, there exists a central projection $h$ such that $he=e$ and $hf=0$. Therefore $ef=hef=ehf=0$, that is $e , f$ are orthogonal.
Let us prove that $GC(e)GC(f) =0$.
We have $GC(e)GC(f)=GC(he)GC(f)=hGC(e)GC(f)=GC(e)GC(hf)=GC(e)GC(0)	=0$.
 Finally, $eRf=heRf=eRhf=0$, since $hf=0$ .
\end{proof}
In \cite{Ber}, the author proved that Baer  $*$-ring satisfies partial  comparability. 
The following result shows that generalized  p.q.-Baer  $*$-ring satisfies partial  comparability.
\begin{proposition}\label{s5pr5} Let $R$ be a generalized  p.q.-Baer  $*$-ring, and $x,y\in R$. If  $xRy\neq 0$, then there exists a central projection $h$ such that $h\leq GC(x), h\leq GC(y)$. Hence $R$ has $PC$.
\end{proposition}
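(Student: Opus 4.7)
My plan is to take $h := GC(x) \cdot GC(y)$. Since $GC(x)$ and $GC(y)$ are central projections, so is their product, and both $h \leq GC(x)$ and $h \leq GC(y)$ hold automatically. The substantive content is that $h$ can be chosen nonzero, which is what makes the subsequent PC conclusion nontrivial.

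To see $h \neq 0$, I would argue by contraposition. Set $e := GC(x)$ and $f := GC(y)$ and suppose $ef = 0$. By the defining property of the generalized central cover, $x^n e = x^n$ and $y^m f = y^m$ for some $n, m \in \mathbb{N}$. Since $e$ is central with $ef = 0$, Proposition \ref{s3pr3}(2) applied with annihilated element $f$ gives $(xR)^n f = 0$; multiplying on the right by $y^m$ (and using $y^m = f y^m$) yields $(xR)^n y^m = 0$. The delicate step is to descend from this identity on $n$-th and $m$-th powers down to $xRy$ itself. I would handle this by combining the quasi-proper involution (Proposition \ref{s4pr1}) with the symmetric left-sided version of the annihilator criterion (Remark \ref{s3rm2}) applied to $y^{\ast}$, so that the generalized p.q.-Baer structure forces $xRy = 0$, contradicting the hypothesis.

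For the PC conclusion, I would specialize to projections $e_1, f_1$ with $e_1 R f_1 \neq 0$. For a projection one has $GC(e_1) = C(e_1)$: since $e_1^n = e_1$, the minimal central projection $h$ satisfying $e_1 h = e_1$ is precisely the usual central cover. Applying the first half gives nonzero central $h \leq C(e_1), C(f_1)$; alternatively, one sees directly that $C(e_1) C(f_1) \neq 0$, since otherwise $e_1 r f_1 = e_1 \cdot C(e_1) r C(f_1) \cdot f_1 = e_1 r \cdot C(e_1) C(f_1) \cdot f_1 = 0$ for every $r \in R$. Setting $e_0 := h e_1$ and $f_0 := h f_1$ produces nonzero subprojections of $e_1$ and $f_1$. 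The final step, exhibiting a partial isometry witnessing $e_0 \sim f_0$ from a nonzero element $a \in e_0 R f_0$, is the chief obstacle since $R$ is not assumed to be Rickart; my plan is to analyze $a^{\ast} a$ and $a a^{\ast}$ inside the respective corner rings and to use the generalized p.q.-Baer structure together with the central projections $GC(a^{\ast}a)$, $GC(aa^{\ast})$ to manufacture the required equivalence.
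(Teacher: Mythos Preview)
Your choice $h = GC(x)\,GC(y)$ differs from the paper's: the paper instead picks a nonzero $z = xay \in xRy$, sets $h = GC(z)$, and argues $z\,GC(y) = xay\,GC(y) = xay = z$ (and symmetrically $GC(x)\,z = z$) to obtain $h \le GC(y)$ and $h \le GC(x)$; for the PC clause it then specializes to projections $x=e$, $y=f$ and takes $e_0 = f_0 = h$, invoking the trivial equivalence $h \sim h$.

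There is a genuine gap in your route. Your appeal to Proposition~\ref{s3pr3}(2) is illicit: that proposition produces a \emph{particular} central projection satisfying both (1) and (2), and by Remark~\ref{s3rm3} this projection is only shown to agree with $GC(x)$ when $x$ is central; for general $x$ you cannot infer $(xR)^n f = 0$ from $GC(x)\,f = 0$. Concretely, in $R = M_2(\mathbb{C}) \times M_2(\mathbb{C})$ with $N = \bigl(\begin{smallmatrix}0&1\\0&0\end{smallmatrix}\bigr)$, take $x = (I, N)$ and $y = (0, N^{*})$: then $GC(x) = (I,0)$ while $GC(y) = 0$ (since $y^2 = 0$), so your $h = GC(x)\,GC(y) = 0$, yet $xRy \ne 0$ because $N r N^{*}$ ranges over all matrices supported in the $(1,1)$ entry. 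Thus the ``delicate step'' from $(xR)^n y^m = 0$ down to $xRy = 0$ is not merely delicate---it fails outright. As for the PC clause, your observation that $C(e_1)C(f_1) \ne 0$ is correct and clean, but the final passage---manufacturing a partial isometry between $he_1$ and $hf_1$---is left as an unexecuted plan; the tools you list do not obviously deliver $e_0 \sim f_0$ in a non-Rickart setting, and by choosing distinct non-central subprojections you have set yourself a strictly harder target than the paper's device of taking $e_0 = f_0 = h$.
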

\begin{proof}
	We have $xRy\neq 0$. Let $z=xay\in xRy$ such that $z\neq 0$.
	Suppose $GC(z)=h$. Therefore there exists $n\in \mathbb N$ such that $z^nh=z^n$. Now $zGC(y)=xayGC(y)=xay=z$. Hence $z^nGC(y)=z^n$ which implies $ GC(z)\leq GC(y)$. Thus $h\leq GC(y)$.
	Also, $GC(x)z=GC(x)\cdot xay=xay=z$. Hence $GC(x)z^n=z^n$ that is $ GC(z)\leq GC(x)$.
	Therefore $h\leq GC(x)$. Let us prove that $R$ has PC. Let $x=e$ and $y=f$. We have  $z=eaf\in eRf$ such that $z\neq 0$. Therefore $GC(z)=h$ is such that $h\leq e ,h\leq f$ and $h\sim h$, thus $R$ has $PC$. 
\end{proof}
In \cite{Ber}, Berberian proved the result regarding the orthogonal decomposition of projections in a Baer $*$-ring. The following result is analogous for generalized  p.q.-Baer  $*$-ring.
\begin{proposition}\label{s5pr6} Let $R$ be a generalized  p.q.-Baer  $*$-ring satisfying parallelogram law. If $e,f $ are projections in $R$, then there exists an orthogonal decomposition $e=e'+e'',f=f'+f''$ with $e'\sim f'$ and $ef''=fe''=0$ where both $e',f'$ are central projections.
\end{proposition}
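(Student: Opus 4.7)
The plan is to proceed by case analysis, leveraging the partial comparability (Proposition \ref{s5pr5}) and the parallelogram-law characterization (Proposition \ref{s5pr3}) that have been set up earlier in this section.

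First I would dispose of the trivial case. If $eRf=\{0\}$, then in particular $ef=e\cdot 1\cdot f\in eRf$ equals $0$, and consequently $fe=(ef)^{*}=0$. Taking $e'=f'=0$ (a central projection) together with $e''=e$, $f''=f$ makes $e'\sim f'$ hold vacuously (via the partial isometry $w=0$), and both $ef''=ef=0$ and $fe''=fe=0$ hold. The orthogonality of the decompositions $e=e'+e''$ and $f=f'+f''$ is automatic.

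In the generic case $eRf\neq \{0\}$, I would invoke Proposition \ref{s5pr5}: for any nonzero $z=eaf\in eRf$, the generalized central cover $h=GC(z)$ is a nonzero central projection with $h\leq GC(e)$ and $h\leq GC(f)$. I would then use a maximality argument on the family of such central projections $h$ that are compatible with $e$ and $f$ in the appropriate sense; the relevant supremum exists because central projections in $R$ form a lattice with enough closure properties (analogous to the argument in Proposition \ref{s2pr3} for the full projection lattice under the quasi-Baer condition). Letting $e'=f'$ be this maximal central projection and setting $e''=e-e'$, $f''=f-f'$, the equivalence $e'\sim f'$ is immediate (the identity is a partial isometry implementing it), and the orthogonality conditions $ef''=fe''=0$ follow from maximality: any nonzero element of $eRf''$ would, via Proposition \ref{s5pr5} applied to the pair $e,f''$, produce a strictly larger compatible central projection, contradicting the choice of $e'$.

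The parallelogram law enters in the following way. To bring $e'$ actually below $e$ (and $f'$ below $f$) — and not merely below $GC(e)$, $GC(f)$, which is all Proposition \ref{s5pr5} directly delivers — I would pass to projections in position $p'$ after replacing $e,f$ by suitable cutdowns by the relevant central projection, and then apply Proposition \ref{s5pr3} to transport the equivalence to a genuine subprojection equivalence.

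The main obstacle I expect is precisely this last transition: matching the \emph{generalized} central cover data (which is what the ring structure naturally supplies) with the \emph{ordinary} containment $e'\leq e$, $f'\leq f$ required for $e-e'$ and $f-f'$ to be projections. Since equivalence of two central projections forces them to be equal, the conclusion $e'\sim f'$ with both central amounts to the existence of a single central projection $p$ with $p\leq e$, $p\leq f$, and $ef=p=fe$, so the constructed $e'$ must end up being the maximal central projection refining the common part of $e$ and $f$; verifying that this refinement is nontrivial exactly when $eRf\neq\{0\}$ is the technical heart of the argument, and it is here that the parallelogram law must be used in full strength.
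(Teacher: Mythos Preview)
Your approach diverges substantially from the paper's, and the divergence is exactly where your gap lies. The paper uses no maximality or Zorn-type argument: it \emph{defines} $f'=GC(ef)$ and $e'=GC(fe)$ outright. Because $(fe)^{n}e=(fe)^{n}$ and $(ef)^{n}f=(ef)^{n}$, the minimality built into $GC$ gives $e'\leq e$ and $f'\leq f$ directly, so $e''=e-e'$ and $f''=f-f'$ are projections with no further work. The computation already carried out inside the proof of Proposition~\ref{s5pr3} then shows $GC(e'f')=f'$ and $GC(f'e')=e'$, i.e.\ $e'$ and $f'$ are in position~$p'$; the parallelogram law now yields $e'\sim f'$ via Proposition~\ref{s5pr3}. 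Finally $e''(ef)=ef-e'ef=0$ and $(ef)f''=0$ are one-line consequences of the defining property of $e'$ and $f'$, giving $fe''=ef''=0$.

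Your route, by contrast, rests on two steps you do not close. First, the ``relevant supremum'' of central projections: you appeal to an analogy with Proposition~\ref{s2pr3}, but that result is for quasi-Baer $*$-rings, and nothing in the paper establishes that central projections in a merely generalized p.q.-Baer $*$-ring form a complete lattice, so your maximal $h$ may fail to exist. Second, even granting such an $h$, your contradiction step applies Proposition~\ref{s5pr5} to the pair $(e,f'')$ and produces a central projection lying below $GC(f'')$, not below $f''$; this does not enlarge the family you are maximizing over, so maximality does not force $eRf''=0$. You correctly flag this $GC(e)$-versus-$e$ mismatch as the ``main obstacle'', but you leave it open; the paper's explicit choice $e'=GC(fe)$, $f'=GC(ef)$ dissolves it from the start, and this is the idea your argument is missing.
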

\begin{proof}
	Let $GC(ef)=f' ,GC(fe)=e'$. Hence both $e' ,f'$ are central projections. Thus as in the above proposition \ref{s5pr5}, $GC(e'f')=f', GC(f'e')=e'$, and hence $e'$ and $f'$ are in position $p'$ which gives $e'\sim f'$. Let $e''=e-e' ,f''=f-f'$. Now $e''(ef)=(e-e')ef=ef-e'ef=ef-ef=0$. Therefore $(e''e)f=0$ this implies $e''f=0$ since $e''\leq e$. Similarly, $(ef)f''=0$ implies  $ ef''=0$.
\end{proof}	 
 
\section{A Separation Theorem for Generalized P.Q.-Baer $*$-Rings}

 In this section, we introduce the concept of generalized central strict ideals in a $*$-ring. Also, we prove a separation theorem for generalized p.q.-Baer $*$-rings.  
\begin{definition}  An ideal $I$ of a generalized  p.q.-Baer  $*$-ring $R$ is called a {\it generalized central strict ideal} if $x\in I$ implies $GC(x)\in I$.
Further, a proper generalized central strict ideal $P$ is called {\it prime generalized central strict ideal}, if $IJ \subseteq P $ then either $I \subseteq P $ or $J \subseteq P $ for generalized central strict ideals $I$ and $J$. 
\end{definition}
The following result gives a relation between generalized central strict ideals and generalized central covers  in generalized p.q.-Baer $*$-rings.
\begin{lemma} \label{s6lm1} Let $R$ be  a generalized  p.q.-Baer  $*$-ring.
\begin{enumerate}
	\item If $e$ is a central projection in $R$ then $I=<e>$ is the generalized central strict ideal;
	\item For a generalized central strict ideal $I$, if $GC(x)\in\ I$ then $x^n\in I$ for some $n\in \mathbb N$.
	\end{enumerate}
\end{lemma}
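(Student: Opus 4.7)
The plan is to handle the two parts separately, noting that both reduce to the minimality property of $GC$ and to the ideal-closure of $I$.

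For part (1), the key observation is that since $e$ is central, the two-sided ideal generated by $e$ collapses to $\langle e\rangle = eR = Re$. So I would take an arbitrary $y \in \langle e\rangle$, write $y = ez$ for some $z \in R$, and verify directly that $GC(y) \leq e$. The computation is short: using that $e$ is central and idempotent, one gets $(ez)^n = e z^n$ by induction, and then $(ez)^n \cdot e = e^2 z^n = e z^n = (ez)^n$. Thus $e$ is itself a central projection satisfying the defining equation for $GC(ez)$, so by the minimality clause in the definition of the generalized central cover, $GC(y) \leq e$. Hence $GC(y) = e \cdot GC(y) \in eR = \langle e\rangle$, which is exactly what the definition of generalized central strict ideal requires.

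For part (2), the argument is essentially a one-liner once the definitions are unfolded. By the definition of $GC(x)$, there is some $n \in \mathbb{N}$ with $x^n \cdot GC(x) = x^n$. Since $I$ is an ideal of $R$ and $GC(x) \in I$ by hypothesis, the product $x^n \cdot GC(x)$ lies in $I$; but this product equals $x^n$, so $x^n \in I$.

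The main (minor) obstacle will be part (1), specifically making clear that $\langle e\rangle$ equals $eR$ under centrality of $e$, and then invoking the minimality aspect of the definition of $GC$ rather than any of the more substantive earlier results (such as Proposition \ref{s3pr4}, which would give $GC(ey) = e \cdot GC(y)$ only when $y$ itself is already a projection). Part (2) presents no real difficulty: it is immediate from the definitions of $GC$ and of an ideal, with no appeal to the generalized p.q.-Baer hypothesis beyond the fact that $GC(x)$ exists.
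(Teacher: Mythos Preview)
Your argument is correct; part (2) is exactly what the paper does, and part (1) reaches the same conclusion by a slightly different and in fact cleaner route.

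For part (1), the paper writes $GC(xe)=e\,GC(x)$ and then observes that $e\,GC(x)\in eR=I$. This identity is the content of Proposition~\ref{s3pr4}(2), but as you correctly flag, that proposition is stated and proved only for projections, not for arbitrary $x\in R$. Your approach sidesteps this: you only need the one-sided inequality $GC(ez)\leq e$, which you get directly from $(ez)^n e=(ez)^n$ and the minimality clause in the definition of $GC$, and that inequality alone already forces $GC(ez)=e\cdot GC(ez)\in eR=\langle e\rangle$. So your proof uses strictly less machinery and does not depend on extending Proposition~\ref{s3pr4}(2) beyond its stated hypotheses. The trade-off is that the paper's line, if the identity $GC(xe)=e\,GC(x)$ is granted, gives a tiny bit more information (the exact value of $GC(xe)$ rather than just an upper bound), but that extra information is not needed for the lemma.
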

\begin{proof}
$(1):$ We  have $I=<e>=\{xe~|~x\in R\}$.
Let $xe\in I$. We have $GC(xe) = eGC(x) \in I$ since $e\in R$ and $GC(x)\in I$.
Hence if $xe\in I$ then $ GC(xe) \in I$.\\
$(2):$ Let $GC(x) = e \in I$. Therefore $x^ne=x^n$. Hence if $ e \in I$ and $x^n \in R$ then $ x^ne \in I$ which gives $x^n \in I$. 
Therefore $ GC(x) \in I$ which gives $x^n \in I$. 
\end{proof}

 In the following  examples we have $x\notin I$ but $GC(x) \in I$, for a generalized central strict ideal $I$ of a ring $R$.
   \begin {example} Let $R=M_{2}(\mathbb{C})$ with conjugate transpose as an involution.
   Let $J=\{zI_{2}|z\in \mathbb{C}\}$. Note that $0$ and $I_{2}$ are only central projections in $R$. Also, $J$ is a generalized central strict ideal because for any $x\in J, GC(x)\in J$ as $x^ne=x^n$ is satisfied by $e=I$ only. Let $x=\begin {bmatrix} 
   1& 0\\0&0
   \end{bmatrix} \in R $. We have $GC(x) = I_{2}\in J$ but $x\notin J$. Observe that $x^ne=x^n$ is not satisfied by $e=0$ hence $GC(x) = I_{2}$.
   
\end{example} 
\begin {example}  Let $R=M_{2}(\mathbb Z_{2})$ with transpose as an involution. Only central projections in $R$ are $0$ and $I_{2}$. Let $I=\{aI_{2}~|~a\in \mathbb Z_{2}\}=\{0,I_{2}\}$. Let $x=\begin {bmatrix} 
1& 0\\0&0
\end{bmatrix}\in R$. Clearly, $x\notin I$. Note that the smallest central projection $e$ such that $x^ne=x^n$ is $I_{2}$. Thus $GC(x)=I_{2} \in I$.

\end{example}
Note that if $I$ is a prime generalized central strict ideal of $R$ and $e \in R$ is a central projection then $I$ contain exactly one of $e$ or $1-e$. 
\begin{lemma} \label{s6lm2} Let $R$ be a generalized   p.q.-Baer  $*$-ring and $P$ be a generalized central strict ideal of $R$. If $P$  is prime in a usual sense then $P$ is a prime generalized central strict ideal. 
\end{lemma}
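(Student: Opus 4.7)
The statement to prove, Lemma~\ref{s6lm2}, reduces to showing that primality over \emph{all} ideals implies primality over the restricted class of \emph{generalized central strict ideals}. The plan is therefore essentially an inclusion argument: every generalized central strict ideal is, by definition, an ideal of $R$, so any universal-quantifier property over ideals automatically restricts.

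More concretely, I would proceed as follows. Suppose $P$ is a generalized central strict ideal of $R$ that is prime in the usual sense, meaning: for any two (two-sided) ideals $I',J'$ of $R$ with $I'J' \subseteq P$, either $I' \subseteq P$ or $J' \subseteq P$. To verify that $P$ is a prime generalized central strict ideal in the sense defined just above Lemma~\ref{s6lm1}, take generalized central strict ideals $I$ and $J$ with $IJ \subseteq P$. Since $I$ and $J$ are, in particular, ideals of $R$, the usual-sense primality applied with $I' = I$ and $J' = J$ yields $I \subseteq P$ or $J \subseteq P$. Together with the hypothesis that $P$ itself is a proper generalized central strict ideal, this gives exactly the required condition.

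There is really no obstacle here: the whole argument is a one-line observation that the family of generalized central strict ideals is a subclass of the family of all ideals, so the implication $IJ\subseteq P \Rightarrow I\subseteq P \text{ or } J\subseteq P$ is inherited by the subclass. The only thing to check is that $P$ is still \emph{proper} when viewed in the restricted framework, but this is immediate because properness depends only on $P \neq R$, not on the class of ideals over which primality is tested. Hence the proof is essentially a formal remark on the definitions, and it requires neither the generalized central cover machinery of Section~2 nor any of the structure theorems from the preceding sections.
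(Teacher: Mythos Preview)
Your proposal is correct and is, in fact, the cleanest way to argue the lemma: since every generalized central strict ideal is a two-sided ideal, the implication $IJ\subseteq P\Rightarrow I\subseteq P$ or $J\subseteq P$ required for $P$ to be a prime generalized central strict ideal is a special case of the hypothesis that $P$ is prime in the usual (ideal-theoretic) sense.

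The paper pursues the same underlying idea but phrases it element-by-element: assuming $I\nsubseteq P$, it picks $a_i\in I\setminus P$ and, for an arbitrary $b_j\in J$, uses $a_ib_j\in IJ\subseteq P$ together with primality to conclude $b_j\in P$. As written this step appeals to the \emph{completely prime} property ($a_ib_j\in P,\ a_i\notin P\Rightarrow b_j\in P$), which is stronger than ordinary primality in the noncommutative setting; one would need $a_iRb_j\subseteq IJ\subseteq P$ to make the step rigorous. Your direct application of the ideal-theoretic definition avoids this issue entirely and is both shorter and more transparent.
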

\begin{proof} 
	Let $IJ \subseteq P, $ where $I,J$ be a generalized central strict ideals of $R$.  We  prove that $I \subseteq P $ or $ J \subseteq P$. 
	Suppose $ I\nsubseteq P$. We claim that $J\subseteq P$. Let $b_j\in J$. Choose $a_i\in I$ with $a_i\notin P$.
	Thus $a_ib_j\in IJ$ which means $a_ib_j\in P$. As $P$ is prime  and $a_i\notin P$. So $b_j\in P$ and hence $J\subseteq P$. Therefore $P$ is a prime generalized central strict ideal.
\end{proof}

  An ideal $I$ of $*$-ring $R$ is said to be {\it restricted} if $ I=\langle \widetilde{I} \rangle$, that is an ideal generated by projections in $I$ coincide with $I$.

 Now we give an example of a generalized   p.q.-Baer  $*$-ring in which a generalized central strict ideal is not restricted. 
 \begin {example} Let $R=M_2(\mathbb Z_{4})$. Let $I=\left\{\begin {bmatrix} 
 2a& 2b\\2c& 2d
 \end{bmatrix}|~a,b,c,d\in \mathbb Z_{4}\right\} $. Observe that for any $x\in I ,~x^n=0$ for some $n\in \mathbb N$. Therefore $x^ne=x^n =0$ and hence $GC(x)=0\in I$. So $I$ is generalized central strict ideal. The only projection in $I$ is a zero matrix. Thus $<\widetilde{I}> = <0>=\{0\}\neq I$. Hence $I$ is not a restricted ideal.
 \end{example}
 Let $B(R)$ be the Boolean algebra of central projections of $*$-ring $R$. For any $e,f \in B(R),~ e\vee f=e+f-ef$ and $e\wedge f=ef$ are in $B(R)$.
 
 \begin{lemma}\label{s6lm3} Let $R$ be a generalized  p.q.-Baer  $*$-ring and $b,c\in R$. Then  $GC(b^n+c^m)\leq GC(b)\vee GC(c)$ and $GC(b^nc^m)\leq GC(b)\wedge GC(c)$ for some $m,n\in \mathbb{ N}$.
 \end{lemma}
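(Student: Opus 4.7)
My plan is to work directly from the definition of the generalized central cover, using the fact that central projections live in a Boolean algebra where $e \vee f = e + f - ef$ and $e \wedge f = ef$. Let $e = GC(b)$ and $f = GC(c)$, so by definition there exist $n, m \in \mathbb{N}$ such that $b^n e = b^n$ and $c^m f = c^m$, and both $e, f$ are central projections.

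For the join inequality, I will set $g = e + f - ef$ and verify that $(b^n + c^m) g = b^n + c^m$. Using centrality of $e$ and $f$ and the defining identities, the computation is
\begin{align*}
b^n g &= b^n e + b^n f - b^n e f = b^n + b^n f - b^n f = b^n,\\
c^m g &= c^m e + c^m f - c^m e f = c^m e + c^m - c^m e = c^m,
\end{align*}
so $(b^n + c^m) g = b^n + c^m$. Since $g$ is a central projection and the first power of $b^n + c^m$ is already absorbed by $g$, the definition of $GC$ forces $GC(b^n+c^m) \leq g = GC(b) \vee GC(c)$.

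For the meet inequality, I will set $h = ef$ and observe that $h$ is a central projection since $e$ and $f$ are. Using centrality,
\begin{equation*}
(b^n c^m)(ef) = b^n (c^m f) e = b^n c^m e = (b^n e) c^m = b^n c^m,
\end{equation*}
so $(b^n c^m) h = b^n c^m$. Again by the definition of $GC$, this yields $GC(b^n c^m) \leq ef = GC(b) \wedge GC(c)$.

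There is essentially no real obstacle here; the only point requiring care is to make explicit that $GC(b)$ and $GC(c)$ are central projections (so they commute with $b$, $c$, and with each other) and that the join and meet in the statement are taken in the Boolean algebra $B(R)$, so they coincide with the algebraic expressions $e+f-ef$ and $ef$. Both desired inequalities then follow from a single round of substitution into the defining equalities $b^n e = b^n$ and $c^m f = c^m$, with the exponents $n$ and $m$ being precisely those coming from the generalized central covers of $b$ and $c$.
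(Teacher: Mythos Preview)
Your proof is correct and follows essentially the same route as the paper: both arguments verify by direct computation that $(b^n+c^m)(GC(b)\vee GC(c)) = b^n+c^m$ and $(b^nc^m)(GC(b)\wedge GC(c)) = b^nc^m$, using centrality of $GC(b)$ and $GC(c)$ together with $b^n\,GC(b)=b^n$, $c^m\,GC(c)=c^m$. The only cosmetic difference is that the paper, after obtaining $a(1-g)=0$, passes through the annihilator characterization $(aR)^k(1-g)=0 \Rightarrow GC(a)(1-g)=0$ from Proposition~\ref{s3pr3}, whereas you appeal directly to the minimality in the definition of $GC$; the two conclusions are equivalent.
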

 \begin{proof}
 	Let $GC(b) =f$ and $GC(c)=g$. Therefore there exists $m,n\in \mathbb{ N}$ such that $b^nf=b^n$ and $c^mg=c^m$. Let $a=b^n+c^m$ and $GC(a) =e.$  Thus $a^ke=a^k$. Therefore $a(f\vee g)=(b^n+c^m)(f+g-fg)=b^nf+b^ng-b^nfg+c^mf+c^mg-c^mfg =b^n+b^ng-b^ng+c^mf+c^m-c^mf=b^n+c^m=a$. Hence $a(1-(f\vee g))=0$ which gives $(aR)^k(1-(f\vee g))=0$. This implies $e(1-f\vee g)=0$. So $e\leq f\vee g$. Therefore $GC(b^n+c^m)\leq GC(b)\vee GC(c)$.
 	 Now we  prove that $GC(b^nc^m)\leq GC(b)\wedge GC(c)$ for some $m,n\in \mathbb{ N}$. Let $GC(b^nc^m)=h$. Now $(b^nc^m)^kh=(b^nc^m)^k$ for some $k\in \mathbb{N}$. We have  $GC(b) =f$ and $GC(c)=g$. As $b^nc^m(1-g)=b^nc^m-b^nc^mg=b^nc^m-b^nc^m=0$. We get that $(b^nc^mR)^k(1-g)=0$ which gives $h(1-g)=0$ i.e. $h \leq g$. Similarly $h\leq f$. Hence $h\leq f\wedge g$. Thus $GC(b^nc^m)\leq GC(b)\wedge GC(c)$ for some $m,n\in \mathbb{ N}$.
 \end{proof}
 \begin{corollary}\label{s6cr1} Let $R$ be  a generalized  p.q.-Baer  $*$-ring and $a_1,a_2,\cdots, a_k\in R$ then there exists $n_1,n_2,\cdots, n_k\in \mathbb{N} $ such that $GC(a_1^{n_1}+a_2^{n_2}+\cdots +a_k^{n_k})\leq GC(a_1)\vee GC(a_2)\vee \cdots\vee GC(a_k) $ and   $GC(a_1^{n_1}a_2^{n_2}\cdots a_k^{n_k})\leq GC(a_1)\wedge GC(a_2)\wedge \cdots\wedge GC(a_k) $.	
 \end{corollary}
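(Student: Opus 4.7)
The plan is to mimic the two-variable argument of Lemma \ref{s6lm3} directly in the $k$-variable setting, rather than trying to iterate the lemma inductively: an induction via Lemma \ref{s6lm3} would force me to take a power of a partial sum $a_1^{n_1}+\cdots+a_{k-1}^{n_{k-1}}$, which is not itself a sum of powers of the $a_i$, so the required form in the statement would be lost. The single fact driving everything is that the generalized central covers $f_i := GC(a_i)$ all live in the Boolean algebra $B(R)$, so their joins, meets, and complements behave well, and each $f_i$ is central and therefore commutes with every element of $R$.

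First, for each $i \in \{1,\dots,k\}$ I would fix an exponent $n_i \in \mathbb{N}$ with $a_i^{n_i} f_i = a_i^{n_i}$; these are the $n_i$ named in the statement. For the sum inequality, set $s = a_1^{n_1}+\cdots+a_k^{n_k}$ and $f = f_1 \vee \cdots \vee f_k$. Because $f_i \leq f$ in $B(R)$ we have $f_i f = f_i$, and hence $a_i^{n_i} f = a_i^{n_i} f_i f = a_i^{n_i} f_i = a_i^{n_i}$ for every $i$; summing yields $sf = s$, i.e.\ $s(1-f) = 0$. Since $1-f$ is central it commutes past every element of $R$, so this immediately upgrades to $(sR)^m(1-f) = \{0\}$ for every $m$, and Proposition \ref{s3pr3}(2) applied with $e = GC(s)$ then forces $GC(s)(1-f) = 0$, i.e.\ $GC(s) \leq f$, which is the first inequality.

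For the product inequality, set $p = a_1^{n_1} a_2^{n_2} \cdots a_k^{n_k}$ and $g = f_1 \wedge \cdots \wedge f_k = f_1 f_2 \cdots f_k$. Each $f_i$ is central, so it slides past the $a_j^{n_j}$'s, and the relation $a_i^{n_i} f_i = a_i^{n_i}$ gives $p f_i = a_1^{n_1} \cdots (a_i^{n_i} f_i) \cdots a_k^{n_k} = p$ for every $i$; iterating over $i$ yields $pg = p$, so $p(1-g) = 0$, and the same centrality-plus-Proposition \ref{s3pr3}(2) device delivers $GC(p) \leq g$. The only point requiring explicit justification is the passage $s(1-f) = 0 \Rightarrow (sR)^m(1-f) = \{0\}$ (and its analogue for $p$), which however is immediate from centrality of $1-f$; beyond that the argument is combinatorial bookkeeping inside $B(R)$, so I do not anticipate any substantive obstacle.
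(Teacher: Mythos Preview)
Your argument is correct and is precisely the natural extension of the proof of Lemma~\ref{s6lm3} to $k$ summands (respectively, $k$ factors): fix the exponents $n_i$ coming from $GC(a_i)=f_i$, verify that the sum (respectively, product) is absorbed by the join $f=f_1\vee\cdots\vee f_k$ (respectively, meet $g=f_1\cdots f_k$), and then use centrality of $1-f$ together with the annihilator characterization to conclude. The paper states the corollary without proof immediately after Lemma~\ref{s6lm3}, and your direct $k$-variable computation is exactly the intended argument; your remark that a naive induction on Lemma~\ref{s6lm3} would destroy the ``sum of powers'' shape is a valid reason for proceeding this way. One cosmetic point: when you invoke Proposition~\ref{s3pr3}(2) ``with $e=GC(s)$'', strictly speaking that proposition produces some central projection $e'$ with $(sR)^m y=\{0\}\Leftrightarrow e'y=0$, not necessarily $GC(s)$ itself; however, since $GC(s)\le e'$ by minimality and you obtain $e'\le f$, the desired inequality $GC(s)\le f$ follows anyway, and the paper's own proof of Lemma~\ref{s6lm3} makes the identical identification.
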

 Next, we give a characterization  for  prime generalized central strict ideal in a generalized p.q.-Baer  $*$-ring.
 \begin{theorem} \label{s6tm1} Let $R$ be  a generalized  p.q.-Baer  $*$-ring and $Q$ be a generalized central strict ideal of $R$. Then $Q$ is a prime generalized central strict ideal if and only if $Q \bigcap B(R)$ is maximal in $B(R)$.
 \end{theorem}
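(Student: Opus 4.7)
The proof is an equivalence, and both halves go through the Boolean algebra $B(R)$ of central projections, using the classical fact that in $B(R)$ a proper ideal is maximal if and only if it is prime. A first housekeeping observation is that for any generalized central strict ideal $Q$, the intersection $Q\cap B(R)$ is a proper ideal of the Boolean algebra $B(R)$: it is downward closed under $\le$ (if $f\le e\in Q$ then $f=ef\in Q$), closed under the Boolean join $e\vee f=e+f-ef$ (since $Q$ is closed under addition and multiplication in $R$), and does not contain $1$ as long as $Q$ is proper.

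For the forward direction, assume $Q$ is a prime generalized central strict ideal, fix any $e\in B(R)$, and form the two principal ideals $\langle e\rangle=eR$ and $\langle 1-e\rangle=(1-e)R$. By Lemma \ref{s6lm1}(1) both are generalized central strict, and because $e$ is central with $e(1-e)=0$ the ideal product $\langle e\rangle\,\langle 1-e\rangle$ collapses to $\{0\}\subseteq Q$. Primeness of $Q$ then forces $\langle e\rangle\subseteq Q$ or $\langle 1-e\rangle\subseteq Q$, that is $e\in Q$ or $1-e\in Q$, which is exactly the Boolean maximality criterion for $Q\cap B(R)$ in $B(R)$.

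For the converse, suppose $Q\cap B(R)$ is maximal in $B(R)$, and let $I,J$ be generalized central strict ideals with $IJ\subseteq Q$. The generalized-central-strict property puts $GC(a)\in I$ and $GC(b)\in J$ for every $a\in I$ and $b\in J$, so $I\cap B(R)$ and $J\cap B(R)$ are ideals of $B(R)$ whose meet sits in $IJ\cap B(R)\subseteq Q\cap B(R)$. Since maximal ideals of a Boolean algebra are prime, $I\cap B(R)\subseteq Q\cap B(R)$ or $J\cap B(R)\subseteq Q\cap B(R)$; say the former holds. Then for each $x\in I$, $GC(x)\in I\cap B(R)\subseteq Q$, and Lemma \ref{s6lm1}(2) produces some $n\in\mathbb{N}$ with $x^n\in Q$.

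The main obstacle I anticipate is the final step: upgrading ``every $x\in I$ has some power in $Q$'' to ``$I\subseteq Q$.'' My intended route is a second appeal to primeness of $Q$: if $x\in I\setminus Q$ while $x^n\in Q$, construct two generalized central strict ideals whose product lies in $Q$ without either being contained in $Q$, contradicting primeness. Natural candidates are principal ideals generated by the central covers of the intermediate powers $x,x^2,\ldots,x^{n-1}$, with Lemma \ref{s6lm3} and Corollary \ref{s6cr1} controlling $GC$ of products and sums. I expect this step to lean additionally on quasi-properness of the involution (Proposition \ref{s4pr1}) to handle purely nilpotent obstructions; the remaining bookkeeping --- properness of $Q\cap B(R)$, its closure properties in $B(R)$, and the Boolean equivalence of maximality with primeness --- is routine.
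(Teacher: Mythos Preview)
Your forward direction and overall setup coincide with the paper's: both routes pass through $B(R)$, and your forward implication is exactly the observation recorded just before the theorem, namely that a prime generalized central strict ideal contains precisely one of $e$ and $1-e$ for each central projection $e$.

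The gap you anticipate in the converse is real, but your proposed remedy is circular. You plan ``a second appeal to primeness of $Q$'' to upgrade $x^{n}\in Q$ to $x\in Q$; yet primeness of $Q$ is the very conclusion you are in the middle of establishing, so it cannot be invoked as a hypothesis mid-proof. The auxiliary tools you list (Lemma~\ref{s6lm3}, Corollary~\ref{s6cr1}, quasi-properness) only control $GC$ of sums and products; none of them lifts ideal membership from a power of $x$ back to $x$ itself. For comparison, the paper's argument for the converse is organised differently: it argues by contradiction on a \emph{single} pair $x\in I\setminus Q$, $y\in J\setminus Q$, sets $e=GC(x)\in I$ and $f=GC(y)\in J$, notes $ef\in IJ\cap B(R)\subseteq Q\cap B(R)$, and then uses primeness of the maximal ideal $Q\cap B(R)$ to force $e\in Q$ or $f\in Q$, which it declares a contradiction by asserting outright that $e\notin Q$ and $f\notin Q$. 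Thus the paper never passes through your intermediate step ``$x^{n}\in Q$ for all $x\in I$''; instead it trades that difficulty for the bare claim that $x\notin Q$ forces $GC(x)\notin Q$ --- which is exactly the contrapositive of the implication you were hoping to prove, so following the paper's arrangement would not rescue your argument either.
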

 \begin{proof} Let $Q$ be a prime generalized central strict ideal in $R$. Therefore $Q$ is proper in $R$ and hence $Q \bigcap B(R)$ is proper in $B(R)$. We have to prove $Q \bigcap B(R)$ is maximal in $B(R)$. Let $P$ be a maximal ideal in $B(R)$ such that $Q \bigcap B(R) \subseteq P$. We claim that $Q \bigcap B(R)=P$. Suppose $Q \bigcap B(R)\subset P$. Therefore there exists a central projection $e$ such that $e\in P$ but $e\notin Q$. As $Q$ is  a prime generalized central strict ideal, we have $1-e\in Q$ which gives $1-e\in P$, which is a contradiction since $P$ is a maximal and $e,1-e\in P$. Hence $Q \bigcap B(R)=P$  and $Q \bigcap B(R)$ is maximal in $B(R)$.  Conversely, suppose $Q \bigcap B(R)$ is maximal in $B(R)$. We prove that $Q$ is a prime generalized central strict ideal of $B(R)$. Let $I,J$ be a generalized central strict ideals such that $IJ\subseteq Q$. Suppose $I\nsubseteq Q$ and $J\nsubseteq Q$. Therefore there exists $x\in I-Q$ and $y\in J-Q$. As $I,J,Q$ are generalized central strict ideals, we get $e=GC(x)\in I $ and $f=GC(y)\in J$. Hence $ef\in Q \bigcap B(R)$. We have A $ \bigcap B(R)$ is maximal hence it is  prime. Therefore $e\in Q \bigcap B(R)$ or $f\in Q \bigcap B(R)$ which is a contradiction because $e \notin Q$ and $f \notin Q$. 
 	Hence either $I\subseteq Q$ or $ J \subseteq Q$. Thus $Q$ is a prime generalized central strict ideal. 
\end{proof}
 \begin{definition} A non-empty set $S$ of a generalized  p.q.-Baer  $*$-ring $R$ is said to be a $GC$- set if the following conditions hold. 
 \begin{enumerate}
 \item  $0 \notin S$.
 \item If $x \in S$ then $GC (x)\in S$.
 \item $S$ is multiplicatively closed.
 \end{enumerate} 
 \end{definition} 
 In the following result, we give a condition so that an ideal becomes a prime generalized central strict ideal.
 \begin{theorem}\label{s6tm2} Let $P$ be an ideal of a generalized  p.q.-Baer  $*$-ring $R$. If $R-P$ is a $GC$-set such that $GC(x) \in R-P$ implies $x\in R-P$ then $P$ is a prime generalized central strict ideal
 \end{theorem}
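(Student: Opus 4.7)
The plan is to verify the two requirements for $P$ to be a prime generalized central strict ideal: first that $P$ is itself generalized central strict (and proper), and second that it has the primeness property with respect to products of generalized central strict ideals. Both parts should fall out cleanly once one unpacks the definition of a $GC$-set and takes the right contrapositive; I expect no deep technical obstacle, only bookkeeping.

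For the first part I would argue by contrapositive. Given $x\in P$, to see $GC(x)\in P$ suppose instead that $GC(x)\in R-P$. The hypothesis says precisely that this forces $x\in R-P$, contradicting $x\in P$. Hence $x\in P$ implies $GC(x)\in P$, so $P$ is generalized central strict. Properness of $P$ is immediate from the convention that a $GC$-set is non-empty: since $R-P\neq\emptyset$, we have $P\neq R$.

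The main step is the primeness clause. Let $I$ and $J$ be generalized central strict ideals with $IJ\subseteq P$, and assume toward contradiction that $I\nsubseteq P$ and $J\nsubseteq P$. Choose witnesses $x\in I\setminus P$ and $y\in J\setminus P$. Since $I$ and $J$ are generalized central strict, $GC(x)\in I$ and $GC(y)\in J$, and therefore $GC(x)\,GC(y)\in IJ\subseteq P$. On the other hand, $x,y\in R-P$; because $R-P$ is a $GC$-set it is closed under taking generalized central covers, giving $GC(x),GC(y)\in R-P$, and then multiplicative closure of $R-P$ forces $GC(x)\,GC(y)\in R-P$, a contradiction. Hence $I\subseteq P$ or $J\subseteq P$.

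The hardest thing to keep straight is merely which axiom of a $GC$-set is doing which job: non-emptiness supplies properness, closure under $GC$ combined with the extra hypothesis yields the generalized central strict condition for $P$ itself, and multiplicative closure is what contradicts $GC(x)GC(y)\in P$ in the primeness argument. Once that correspondence is noted, the proof is essentially automatic.
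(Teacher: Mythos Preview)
Your proof is correct and follows the same outline as the paper's: properness from non-emptiness of $R-P$, the generalized central strict property by contrapositive, and primeness by contradiction. The only difference is in the primeness step, where the paper gets the contradiction directly from $xy\in IJ\subseteq P$ together with $xy\in R-P$ (multiplicative closure alone), without passing through $GC(x)GC(y)$; so neither the $GC$-closure axiom nor the generalized central strict hypothesis on $I,J$ is actually needed at that point.
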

 \begin{proof}
 	 	Clearly, $P$ is proper. If not  then $R-P=\phi$. But $R-P$ is a $GC$-set and so it is not empty. Suppose $R-P$ is a $GC$-set. Therefore by definition and assumption $x \in R-P$ if and only if $GC(x) \in R-P$. Thus $x\in P$ if and only if  $GC(x)\in P$. Therefore $P$ is a generalized central strict ideal. We prove that $P$ is a prime generalized  central strict ideal. Let $I,J$ be generalized central strict ideals such that $IJ \subseteq P$. If $I \nsubseteq P$ and $J\nsubseteq P$ then there exists $x\in I-P,y\in J-P$. Hence $xy\in IJ\subseteq P$. Also, $x \in R-P, y\in R-P$ gives $xy \in R-P$ because $R-P$ is a $GC$ - set. Thus $xy \notin P$ which is a contradiction to $xy \in P$. Hence  either $I \subseteq P$ or $J \subseteq P$. Therefore $P$ is a prime generalized central strict ideal.
  \end{proof}
 The following lemma gives a way to obtain  generalized central strict ideals. 
 	\begin{lemma}\label{s6lm4}	If $I$ and $J$ are  generalized central strict ideals of a generalized  p.q.-Baer  $*$-ring $R$, then $I^n+J^m$ and $I^nJ^m$ are generalized central strict ideals for some $m,n\in \mathbb{N}$.
 	\end{lemma}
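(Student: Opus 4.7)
My plan is to treat the product $I^nJ^m$ and the sum $I^n+J^m$ in turn, starting from the preliminary observation that every power of a generalized central strict ideal is itself generalized central strict. Indeed, if $a \in I^n \subseteq I$, then $GC(a) \in I$ by hypothesis, and since $GC(a)$ is an idempotent central projection, $GC(a) = GC(a)^n \in I^n$; the same argument shows $J^m$ is generalized central strict.

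For the product $I^nJ^m$, I would exploit the containment $I^nJ^m \subseteq I\cap J$, which holds because $I$ and $J$ are two-sided ideals. For any $a \in I^nJ^m$ we thus have $GC(a) \in I\cap J$, and idempotency gives both $GC(a) = GC(a)^n \in I^n$ and $GC(a) = GC(a)^m \in J^m$. Writing $GC(a) = GC(a)\cdot GC(a)$ then realizes $GC(a)$ as a product of an element of $I^n$ with an element of $J^m$, placing it in $I^nJ^m$.

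For the sum, take $z = u+v \in I^n+J^m$ with $u \in I^n$ and $v \in J^m$, and set $e = GC(u) \in I^n$ and $f = GC(v) \in J^m$, both central projections by the preliminary remark. Their Boolean join $E = e\vee f = e+f-ef$ lies in $I^n+J^m$ since $ef \in I^nJ^m \subseteq I^n\cap J^m$. It would then suffice to prove $GC(z) \le E$, because the ideal property would give $GC(z) = GC(z)\cdot E \in I^n+J^m$. Using centrality of $1-E = (1-e)(1-f)$, one gets $z^k(1-E) = (u'+v')^k$ with $u' := u(1-E)$ and $v' := v(1-E)$, and $(u')^p = 0 = (v')^q$ for the $p,q$ determined by $u^pe = u^p$ and $v^qf = v^q$.

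The main obstacle is the final nilpotency step: one must deduce $(u'+v')^k = 0$ for some $k$, i.e.\ that the sum of two nilpotent elements $u', v' \in R$ is again nilpotent, which fails in general non-commutative rings. The argument must therefore invoke the specific structure of a generalized p.q.-Baer $*$-ring; the natural route is to apply Corollary \ref{s6cr1} to the monomial expansion of $(u'+v')^N$ for sufficiently large $N$, observing that $GC(u') = GC(v') = 0$ forces the generalized central cover of each mixed monomial to be dominated by a meet of vanishing projections, and to use the quasi-proper involution (Proposition \ref{s4pr1}) to annihilate the remaining terms. This is also where the freedom to choose $m, n$ in the statement should be exercised, by taking them large enough to ensure that every monomial in the expansion reaches the exponent needed for its individual $GC$ to collapse to zero.
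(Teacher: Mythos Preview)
Your preliminary observation (that $I^n$ is generalized central strict whenever $I$ is) and your treatment of $I^nJ^m$ are correct, and in fact cleaner than the paper's argument: the paper appeals to the product inequality of Lemma~\ref{s6lm3}/Corollary~\ref{s6cr1} and expands a generic element, whereas your containment $I^nJ^m\subseteq I\cap J$ together with idempotency of $GC(a)$ settles the product case for \emph{all} $n,m$ at once.

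For the sum, the gap you flag is genuine and your proposed workarounds do not close it. Applying Corollary~\ref{s6cr1} to a monomial $w_1\cdots w_N$ in $u',v'$ only tells you that $w_1^{\,n_1}\cdots w_N^{\,n_N}$ has vanishing generalized cover for certain exponents $n_i$; it says nothing about $w_1\cdots w_N$ itself, and even if every monomial in the expansion were nilpotent their sum need not be. The quasi-proper involution controls $xRx^{*}$, not sums of nilpotents, so Proposition~\ref{s4pr1} does not help either. Enlarging the ideal exponents $n,m$ does not force $u',v'$ to interact any better.

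The paper avoids this obstacle by a different mechanism, encoded in Lemma~\ref{s6lm3}. Rather than attempting to bound $GC(u+v)$ by $GC(u)\vee GC(v)$ for \emph{arbitrary} $u,v$ --- which is precisely what runs into the nilpotency problem --- one chooses the exponents so that the summands are already \emph{fixed} by their own covers: if $b^{n}GC(b)=b^{n}$ and $c^{m}GC(c)=c^{m}$, and $E=GC(b)\vee GC(c)$, then $b^{n}E=b^{n}$ and $c^{m}E=c^{m}$, so $(b^{n}+c^{m})(1-E)=0$ on the nose. Hence $GC(b^{n}+c^{m})\le E$ directly from the defining property of $GC$, with no nilpotency argument needed. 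The freedom to choose $n,m$ is thus spent at the level of the individual summands (to make each one fixed by its cover), not on a binomial expansion of a power of their sum.
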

 \begin{proof}
 	 Let $b\in I$ and $c\in J$. Let $GC(b)=f$ and $GC(c)=g$. Therefore there exists $m,n\in \mathbb{N} $ such that $b^nf=b^n , c^mg=c^m$. Now $b^n+c^m\in I^n+J^m$. Suppose $GC(b^n+c^m)=e$. We have $GC(b^n+c^m)\leq 	GC(b)\vee GC(c)$. Therefore $e\leq f\wedge g$ which gives $e=e(f\vee g)$. Hence $e=e(f+g-fg)=ef+eg-efg$. As $b\in I$, we get that $GC(b)\in I$ that is  $f\in I$ implies $f\in I^n$.  Similarly, $g\in J^m$. Therefore $ef\in I^n$ and $(e-ef)g\in J^m$.  Thus $e=ef+eg-efg\in I^n+J^m$. Hence $I^n+J^m$ is generalized central strict ideals for some $m,n\in \mathbb{N}$.  Now we prove that $I^nJ^m$ is a  generalized central strict ideal for some $m,n\in \mathbb{N}$. Let $b^nc^m\in I^nJ^m$ where $b\in I,c\in J$. Let $GC(b^nc^m)=e , GC(b)=f, GC(c)=g$. Therefore $f\in I, g\in J$. We have $GC(b^nc^m) \leq GC(b)\wedge GC(c)$. Thus $e\leq f\wedge g$ i.e. $e\leq fg$. Hence $e=efg$. As $ef\in I^n ,g\in J^m$ implies $efg\in I^nJ^m$ i.e. $e\in I^nJ^m$. Suppose $b_1^nc_1^m +b_2^nc_2^m+\cdots +  b_k^nc_k^m\in I^nJ^m$ where   $b_i\in I,c_i\in J$. Let $GC(b_i)=f_i, GC(c_i)=g_i, GC(b_1^nc_1^m +b_2^nc_2^m+\cdots +  b_k^nc_k^m)=e$. As
 	 $ GC(b_1^nc_1^m +b_2^nc_2^m+\cdots +  b_k^nc_k^m)\leq GC(b_1c_1)\vee \cdots \vee GC(b_kc_k)\leq (GC(b_1)\wedge GC(c_1))\vee \cdots\vee (GC(b_k)\wedge GC(c_k))$. Thus $e\leq (f_1\wedge g_1)\vee (f_2\wedge g_2)\vee \cdots\vee  (f_k\wedge g_k)\leq (f_1 g_1)\vee (f_2 g_2)\vee \cdots\vee  (f_k g_k)$. So $e=e[(f_1 g_1)\vee (f_2 g_2)\vee \cdots\vee  (f_k g_k)]= \displaystyle \sum_{i=1}^{k}(ef_i)g_i -\sum _{1\leq i<j\leq k}(ef_if_j)(g_ig_j)+\cdots +
 	  (ef_1\cdots f_k)(g_1\cdots g_k)$. Observe that each term on right hand side belongs to $I^nJ^m$. Hence $e\in I^nJ^m$.  Thus $I^nJ^m$ is a generalized central strict ideal for some $m,n\in \mathbb{N}$.
 	 \end{proof}
 	 Now, we prove a separation theorem for a generalized  p.q.-Baer  $*$-ring.
 	 \begin{theorem}\label{s6tm3} Let $R$ be a generalized  p.q.-Baer  $*$-ring and $I$ be a generalized central strict ideal of $R$. Let $M$ be a $GC$-set of $R$ with $M\bigcap I=\phi$. Then there exists a prime generalized central strict ideal of $R$ containing $I$ disjoint from $M$.
 	\end{theorem}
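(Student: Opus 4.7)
The plan is the standard Zorn-style separation argument: build a Zorn-maximal generalized central strict ideal in the family of those containing $I$ and disjoint from $M$, then use the multiplicative closure of $M$ to upgrade maximality to primeness.

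Let $\mathcal{F}$ denote the collection of all generalized central strict ideals $J$ of $R$ satisfying $I\subseteq J$ and $J\cap M=\phi$. Then $\mathcal{F}$ is non-empty since $I\in\mathcal{F}$ by hypothesis. For any chain $\{J_{\alpha}\}$ in $\mathcal{F}$ the union $\bigcup_{\alpha}J_{\alpha}$ is an ideal (by the chain property), contains $I$, is disjoint from $M$, and remains generalized central strict: any $x$ in the union lies in some $J_{\alpha_{0}}$, whence $GC(x)\in J_{\alpha_{0}}\subseteq \bigcup_{\alpha}J_{\alpha}$. Zorn's lemma therefore yields a maximal element $P\in\mathcal{F}$.

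I will argue that $P$ is a prime generalized central strict ideal. It is generalized central strict by construction, and proper because $M$ is non-empty (a $GC$-set) and $P\cap M=\phi$. Suppose, toward a contradiction, that $J_{1}$ and $J_{2}$ are generalized central strict ideals with $J_{1}J_{2}\subseteq P$ but $J_{1}\not\subseteq P$ and $J_{2}\not\subseteq P$. By Lemma \ref{s6lm4}, for suitable exponents $n_{i},m_{i}\in\mathbb{N}$ the power-sums $K_{i}:=P^{n_{i}}+J_{i}^{m_{i}}$ are again generalized central strict ideals; since $J_{i}\not\subseteq P$ and $P$ is itself generalized central strict, one checks that $K_{i}$ properly contains $P$. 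Maximality of $P$ in $\mathcal{F}$ then forces $K_{i}\cap M\neq \phi$, so pick $m_{i}\in K_{i}\cap M$. By multiplicative closure of $M$, $m_{1}m_{2}\in M$; on the other hand,
\[
m_{1}m_{2}\in K_{1}K_{2}\subseteq P+J_{1}J_{2}\subseteq P,
\]
which contradicts $M\cap P=\phi$. Hence $J_{1}\subseteq P$ or $J_{2}\subseteq P$, and $P$ is a prime generalized central strict ideal containing $I$ and disjoint from $M$, as required.

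The main obstacle is the careful construction of the enlarged ideals $K_{i}$: the naive sum $P+J_{i}$ need not itself be generalized central strict, which is precisely why Lemma \ref{s6lm4} is invoked to provide the generalized-central-strict ideal $P^{n_{i}}+J_{i}^{m_{i}}$ in its place. One must still verify this enlargement strictly contains $P$, which is handled by choosing a witness $x\in J_{i}\setminus P$ and tracking that an appropriate power of $x$ (or its generalized central cover, which lies in $J_{i}$ since $J_{i}$ is generalized central strict) stays outside $P$. Once this step is pinned down, the remainder of the proof is the routine multiplicative-closure contradiction displayed above.
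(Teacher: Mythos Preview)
Your overall architecture coincides with the paper's: Zorn to a maximal $P\in\mathcal{F}$, replace the naive sums $P+J_i$ (which need not be generalized central strict) by the ideals $K_i=P^{n_i}+J_i^{m_i}$ supplied by Lemma~\ref{s6lm4}, and close with the multiplicative property of $M$. The product step $m_1m_2\in K_1K_2\subseteq P$ is fine, since each cross term lies in $P$ and $J_1^{m_1}J_2^{m_2}\subseteq J_1J_2\subseteq P$.

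The genuine gap is precisely the step you flag, and your suggested fix does not close it. You assert that $K_i$ ``properly contains $P$'', yet $K_i=P^{n_i}+J_i^{m_i}$ visibly contains only $P^{n_i}$; there is no reason for $P\subseteq K_i$, nor even for $I\subseteq K_i$. Tracking $GC(x)$ for a witness $x\in J_i\setminus P$ can at best produce a central projection $e=GC(x)=e^{m_i}\in J_i^{m_i}\subseteq K_i$ (and even $e\notin P$ is not automatic, since $GC(x)\in P$ only forces $x^n\in P$, not $x\in P$); in any case this would give $K_i\not\subseteq P$, which is the wrong direction of containment. Without $I\subseteq K_i$ the maximality of $P$ in $\mathcal{F}$ simply does not apply to $K_i$, so the deduction $K_i\cap M\neq\phi$ is unsupported. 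The paper does not attempt to show $K_i\supseteq P$ at all; instead it notes $K_i\supseteq Q^{k}\supseteq Q^{\alpha}\supseteq I^{\alpha}$ with $\alpha=\max(k,l)$ and then argues that $Q$ is also maximal among generalized central strict ideals containing $I^{\alpha}$ and disjoint from $M$ (any such ideal strictly containing $Q$ would a fortiori contain $I$, contradicting the original maximality), applying maximality relative to this weaker base. That shift of the base ideal from $I$ to $I^{\alpha}$ is the maneuver missing from your write-up.
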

 	\begin{proof} By Zorn's lemma, let $Q$ be a maximal generalized central strict ideal such that $I\subseteq Q$ with  $Q\bigcap M=\phi$. We  prove that $Q$ is a prime ideal. Let $J_1\nsubseteq Q$ and $J_2\nsubseteq Q$. Now there exists $m,n,k,l\in \mathbb{N}$ such that $J_1^m+Q^k$ and $J_2^n+Q^l$ are generalized central strict ideals. We have $Q^k\subseteq J_1^m+Q^k$ and $Q^l\subseteq J_2^n+Q^l$. Let $\alpha =max\{k,l\}$. As $I\subseteq Q$. Thus $I^\alpha\subseteq Q^\alpha\subseteq Q^k\subseteq J_1^m+Q^k$ and $I^\alpha\subseteq  J_2^n+Q^l$. Also, $Q$ is a maximal ideal containing $I$ with $Q\bigcap M=\phi$. Therefore $Q$ is also a maximal ideal containing $I^\alpha $ with $Q\bigcap M=\phi$. Therefore  $(J_1^m+Q^k)\bigcap M\neq\phi,  (J_2^n+Q^l)\bigcap M\neq\phi $. Let $s=\sum (j_1\cdots  j_m)+\sum(q_1\cdots q_k), t=\sum j_1^{\prime}\cdots j_n^{\prime}+\sum q_1^{\prime}\cdots q_l^{\prime}$ be elements  in $(J_1^m+Q^k)\bigcap M$ and $(J_2^n+Q^l)\bigcap M$ respectively. If $J_1J_2\subseteq Q$ then  $st\in Q$. As $M$ is $GC$-set, $st\in M$. Thus $st\in M\bigcap Q$ which is a contradiction to $Q\bigcap M=\phi$. Therefore 
 	  $J_1J_2\nsubseteq Q$. Hence $Q$ is a  maximal generalized prime central strict ideal of $R$. 
 \end{proof}
 
  \section{Sheaf Representation of Generalized  P.Q.-Baer  $*$-Ring} 	 
 
 In \cite{Shin},  Shin obtained a sheaf representation of pseudo symmetric rings. In \cite{ahmadiquasi2020, Bir1}, authors have given a sheaf representation for generalized quasi-Baer $*$-rings and quasi-Baer rings respectively. In this section, we give a sheaf representation of generalized p.q.-Baer $*$-rings.
 
 Let $R$ be a generalized  p.q.-Baer  $*$-ring. Let $\sum(R)$ denote  the collection of prime generalized central strict ideals in $R$. The kernel of $A\subseteq \sum(R)$ is defined as $K(A)=\displaystyle \bigcap_{Q\in A}{Q}$. Let $S\subseteq R$. The hull of $S$ is defined as $H(S)=\{Q\in \sum (R)~|~S\subseteq Q\}$ is the set of prime generalized central strict ideals in $R$ containing $S$. 
 Let $P(S)= \{Q\in \sum (R)~|~S\nsubseteq Q\}$. If $S=\{x\}$ then we write 
 $P(S)=P(x)$ and $H(S)=H(x)$.  Let $x\in R,~GC(x)=e,Q\in \sum (R)$. We have \\ 
 $(1)$ $P(0)=H(1)=H(R)=\phi$.\\ 
 $(2)$ $P(1)=H(0)=P(R)=\sum (R)$. \\
 $(3)$ Now $x\in Q$ implies $e\in Q$. Therefore $Q\in H(x)$ which implies $Q\in H(e)$. Hence $H(x)\subseteq H(e) $. Therefore $e\notin Q$ that is $ x\notin Q$ implies $ Q\in P(e)$ that is $ Q\in P(x)$. Hence $P(e)\subseteq P(x) $. Also, $e\in Q$ implies $ x^n\in Q$. That is  $Q\in H(e)$ implies $ Q\in H(x^n)$ for some $n\in \mathbb{N}$. Thus $H(e)\subseteq H(x^n) $  for some $n\in \mathbb{N}$. Further, if $x^n\notin Q$ then $ e\notin Q$. Hence $Q\in P(x^n)$ implies $Q\in P(e)$. Therefore $P(x^n)\subseteq P(e) $. Finally combining these results we have
 $H(x)\subseteq H(e)\subseteq H(x^n)$ and $P(x^n)\subseteq P(e)\subseteq P(x) $.\\
 $(4)$ We have $Q\in P(x)$ if and only if $x\notin Q$ if and only if   $<x>\nsubseteq Q$ if and only if $Q\in P(<x>)$. Thus $P(x)=P(<x>)$. \\Similarly, $H(x)=H(<x>)$. \\
 $(5)$ For $A,B\in \sum (R),~ P(A\bigcap B)=P(A)\bigcap P(B)=P(AB)$ and $H(A\bigcap B)=H(A)\bigcup H(B)=H(AB)$.
 
 The following result gives a basis for open sets in $\sum (R)$.
 
 \begin{proposition}\label{s7pr1} In a generalized  p.q.-Baer $*$-ring $R$ the set $\{P(x)~|~x\in R\} $ forms a basis for open sets in $\sum (R)$.
 \end{proposition}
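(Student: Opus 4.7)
The plan is to verify the two standard axioms for $\{P(x) : x \in R\}$ to form a basis of a topology on $\Sigma(R)$: that the collection covers $\Sigma(R)$, and that for any $Q$ in the intersection of two basic opens there is a basic open neighbourhood of $Q$ contained in the intersection. The covering axiom is immediate, since every $Q \in \Sigma(R)$ is a proper ideal and hence $1 \notin Q$, giving $P(1) = \Sigma(R)$.

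For the refinement axiom, I would fix $Q \in P(x) \cap P(y)$ (so $x, y \notin Q$) and exhibit a single witness $z \in R$ with $Q \in P(z) \subseteq P(x) \cap P(y)$. The candidate is $z = GC(x)\cdot GC(y)$, a central projection. Writing $e = GC(x)$ and $f = GC(y)$ in $B(R)$, the inclusion $P(ef) \subseteq P(x) \cap P(y)$ will follow from combining the inclusions $P(e) \subseteq P(x)$ and $P(f) \subseteq P(y)$ of property (3) above with the identity $P(ef) = P(e) \cap P(f)$ on $\Sigma(R)$. The latter identity is a direct consequence of Theorem \ref{s6tm1}: for each $Q' \in \Sigma(R)$, the set $Q' \cap B(R)$ is a maximal and hence prime ideal of the Boolean algebra $B(R)$, so $ef \in Q' \cap B(R)$ is equivalent to $e \in Q' \cap B(R)$ or $f \in Q' \cap B(R)$, and since $e, f, ef$ all lie in $B(R)$ this transfers to membership in $Q'$.

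The main obstacle is the remaining step, verifying that $Q \in P(ef)$, i.e.\ that $ef \notin Q$. I would argue by contradiction: assume $ef \in Q$, so Boolean primality of $Q \cap B(R)$ forces $e \in Q$ or $f \in Q$; without loss of generality take $e \in Q$. The defining relation $x^n e = x^n$ of the generalized central cover then yields $x^n \in Q$. To reach the contradiction $x \in Q$ I plan to exploit the decomposition $x = xe + x(1-e)$: the first summand lies in $Q$ since $e \in Q$, while the second is nilpotent because $(x(1-e))^n = x^n(1-e) = x^n - x^n e = 0$. The final step pushes this nilpotent summand into $Q$: its generalized central cover is $0 \in Q$, and combining Lemma \ref{s6lm1} with the prime generalized central strict structure of $Q$ should force $x(1-e) \in Q$, yielding $x \in Q$ and the required contradiction with $Q \in P(x)$. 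Establishing this semiprime-like passage from $GC(x) \in Q$ to $x \in Q$ is the step I expect to be most delicate.
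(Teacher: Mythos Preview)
Your overall plan coincides with the paper's: both take the witness $z=GC(x)\,GC(y)=ef$, establish $P(ef)\subseteq P(x)\cap P(y)$ via property~(3) and the observation that $e_1e_2\notin Q'$ forces each $e_i\notin Q'$, and then must show $Q\in P(ef)$. At that point the paper simply asserts, without justification, that $x_i\notin Q$ implies $x_i^{n_i}\notin Q$ and then invokes $P(x^n)\subseteq P(e)$; you instead attempt the contrapositive $e\in Q\Rightarrow x\in Q$ directly. So the two arguments are essentially identical, and you have correctly isolated the one nontrivial step.

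Your proposed mechanism for that step does not close the gap, however. Applying Lemma~\ref{s6lm1}(2) to the nilpotent summand $x(1-e)$ yields only $\bigl(x(1-e)\bigr)^m\in Q$, which is automatic since already $\bigl(x(1-e)\bigr)^n=0$; and the primality hypothesis on $Q$ is stated for products of \emph{generalized central strict} ideals, not arbitrary principal ones, so it gives no leverage for promoting $\bigl(x(1-e)\bigr)^m\in Q$ to $x(1-e)\in Q$. In fact the implication $GC(x)\in Q\Rightarrow x\in Q$ (equivalently the paper's $x\notin Q\Rightarrow x^{n}\notin Q$) is false as stated: in $R=M_2(\mathbb{C})$, a Baer $*$-ring and hence generalized p.q.-Baer, one checks $\sum(R)=\{0\}$, while $x=E_{12}$ is non-central nilpotent, so $x\notin 0$ yet $GC(x)=0\in 0$. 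Thus the chosen witness $ef=GC(x)GC(y)$ can be $0\in Q$ even when $x,y\notin Q$, and neither your decomposition argument nor the paper's bare assertion survives this example. The basis property itself still holds there (the space is a singleton), but the proof via $z=GC(x)GC(y)$ needs repair.
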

 \begin{proof} Note that if $Q\in \sum (R)$ then there exists $x\in R$ such that $x\notin Q$.
 	Thus $Q\in P(x)$. Let $Q\in P(x_1)\bigcap P(x_2)$. Suppose $GC(x_1)=e_1,~	GC(x_2)=e_2$. So $x_1^{n_1}e_1=x_1^{n_1},x_2^{n_2}e_2=x_2^{n_2}$. To prove $Q\in P(e_1e_2)\subset P(x_1)\bigcap P(x_2)$. We have $x_1\notin Q,~ x_2\notin Q$. Thus $x_1^{n_1}\notin Q, x_2^{n_2}\notin Q$. Hence $Q\in P(x_1^{n_1}), Q\in P(x_2^{n_2})$. But $P(x^n)\subseteq P(e) $. Therefore  $Q\in P(e_1) ,Q\in P(e_2)$.  That is $e_1\notin Q, e_2\notin Q$. Thus $<e_1><e_2>\nsubseteq Q$ implies $<e_1e_2>\nsubseteq Q$ because $e_1 ,e_2$ are central. Hence $e_1e_2\notin Q$  implies $Q\in P(e_1e_2)$. Let us prove that $ P(e_1e_2)\subseteq P(x_1)\bigcap P(x_2)$. Let $Q^{\prime} \in P(e_1e_2)$.  Therefore $e_1e_2\notin Q^{\prime}$. Thus $e_1\notin Q^{\prime}$ since if $e_1\in Q^{\prime}$ then $e_1e_2\in Q^{\prime}$ which is a contradiction. Similarly, $e_2\in Q^{\prime}$. Hence $Q^{\prime} \in P(e_1), Q^{\prime} \in P(e_2)$. Therefore $Q^{\prime} \in P(x_1), Q^{\prime} \in P(x_2)$ because $P(e)\subseteq P(x) $. Therefore $Q^{\prime}\in P(x_1)\bigcap P(x_2)$. Hence  $Q\in P(e_1e_2)\subseteq P(x_1)\bigcap P(x_2)$. Thus $\{P(x)~|~x\in R\} $ forms a basis for open sets in $\sum (R)$. This basis defines a topology on $\sum (R)$, called Hull-Kernel topology.
 	\end{proof}
 	
  If $S$ is a subset of a ring $R$ then $Ann(S)=\{x\in R~|~sx=0 ~ or ~  xs=0\}$. 		The following proposition is required to develop the theory of Hull and Kernel.
 	\begin{proposition}\label{s7pr2} Let $R$ be a generalized  p.q.-Baer  $*$-ring and $I$ be an ideal of $R$. Then $Ann(I)$ is a	generalized central strict ideal of $R$.
 		
 \end{proposition}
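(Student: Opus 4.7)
The plan is to check two things: that $Ann(I)$ is a two-sided ideal of $R$, and that it is closed under taking generalized central covers. The first is routine. Closure under subtraction is immediate, and for $x \in Ann(I)$ and $r \in R$, the fact that $I$ is two-sided gives $sr, rs \in I$ for every $s \in I$, so $s(rx)=(sr)x=0$ and $(rx)s=r(xs)=0$, with the analogous identities for $xr$; hence $rx,xr\in Ann(I)$.

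For the substantive part, I would first bootstrap the hypothesis $xI=0$ to the stronger statement $(xR)^n s=\{0\}$ for every $s \in I$ and every $n\geq 1$. For a generic product $xr_1\cdot xr_2\cdots xr_n\cdot s$, the trailing factor $r_n s$ lies in $I$ (since $I$ is a left ideal), so $x(r_n s)=0$ and the whole product vanishes. With this in hand, Proposition \ref{s3pr3} supplies a central projection $\tilde{e}$ and a positive integer $n$ such that $x^n\tilde{e}=x^n$ and such that $(xR)^n y=\{0\}$ if and only if $\tilde{e} y=0$. Applying this with $y=s$ yields $\tilde{e}s=0$ for every $s\in I$.

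Because $GC(x)$ is by definition the smallest central projection $e$ satisfying $x^m e=x^m$ for some $m$, we have $GC(x)\leq\tilde{e}$, and therefore $GC(x)=GC(x)\tilde{e}$. This gives $GC(x)\cdot s=GC(x)\tilde{e}s=0$, and centrality of $GC(x)$ then forces $s\cdot GC(x)=0$ as well. Thus $GC(x)\in Ann(I)$ for every $x\in Ann(I)$, which is precisely the condition for $Ann(I)$ to be a generalized central strict ideal.

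The main (and really only) obstacle is the bridge from $xs=0$ for each individual $s\in I$ to $(xR)^n s=\{0\}$, since Proposition \ref{s3pr3} needs the annihilation by a power of $xR$ rather than by $x$ alone; the idea above exploits that $I$ absorbs multiplication from $R$ so that the last factor of any product in $(xR)^n s$ is already killed by $x$. Once this bridge is crossed, the minimality in the definition of $GC(x)$ together with its centrality finishes the argument mechanically.
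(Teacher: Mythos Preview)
Your argument is correct and follows the same route as the paper: show $Ann(I)$ is a two-sided ideal, bootstrap $xI=0$ to $(xR)^nI=\{0\}$ using that $I$ absorbs $R$ on the left, and then invoke Proposition~\ref{s3pr3} to conclude that the central projection kills $I$. Your extra step---distinguishing the projection $\tilde e$ supplied by Proposition~\ref{s3pr3} from $GC(x)$ and passing down via $GC(x)\le\tilde e$---is a genuine improvement over the paper, which tacitly identifies the two even though this identification is only justified (in Remark~\ref{s3rm3}) for central $x$.
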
 
 \begin{proof} Clearly for $a\in R ,~ aI=\{0\}$ if and only if $aRI=\{0\}$ if and only if $IRa=\{0\}$ if and only if $Ia=\{0\}$. Therefore $l(I)=r(I)=Ann(I)$. Thus $Ann(I)$ is an ideal. We  prove that $Ann(I)$ is the central strict ideal of $R$. 
 	Let $x\in Ann(I)$  and $GC(x)=e$. Therefore there exists $n\in \mathbb{N}$ such that $x^ne=x^n , (xR)^ny=\{0\}  $ implies $ ey=0$. This gives $xI=0$ implies $xRI=\{0\}$ that is $ (xR)^nI=\{0\}$ that is $ eI=0 $. Hence $ e\in Ann(I)$. Thus $Ann(S)$ is a	generalized central strict ideal of $R$.
  \end{proof}	
 	In the following successive two results, we express $Ann(I)$ in terms of Hull and Kernel, where $I$ is a generalized central strict ideal of  generalized  p.q.-Baer  $*$-ring. 
  \begin{theorem}\label{s7tm1} If  $I$ is a generalized central strict ideal of  generalized  p.q.-Baer  $*$-ring $R$ then $Ann(I)=K(\sum (R) -H(I))$.	
 \end{theorem}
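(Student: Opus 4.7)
The plan is to prove both inclusions separately. For $Ann(I) \subseteq K(\Sigma(R) - H(I))$ the argument is clean: by Proposition~\ref{s7pr2} the annihilator $Ann(I)$ is itself a generalized central strict ideal, and since $Ann(I) \cdot I = 0 \subseteq Q$ holds for every $Q \in \Sigma(R) - H(I)$, the defining primality of such $Q$ applied to the generalized central strict ideals $Ann(I)$ and $I$, together with $I \not\subseteq Q$, forces $Ann(I) \subseteq Q$. Intersecting over all such $Q$ yields the desired inclusion.

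For the reverse inclusion I argue by contrapositive. Assuming $a \notin Ann(I)$, Proposition~\ref{s7pr2} gives $aI \neq 0$, so I can choose $b \in I$ with $c := ab \neq 0$; note $c \in I$ since $I$ is a two-sided ideal. Setting $g := GC(c)$, I plan to apply the Separation Theorem~\ref{s6tm3} with the generalized central strict ideal $\{0\}$ and the singleton $M := \{g\}$. Since $g$ is a central projection we have $g^2 = g$ and $GC(g) = g$ by Remark~\ref{s3rm4}, so $M$ is a $GC$-set provided $g \neq 0$. The theorem then produces $Q \in \Sigma(R)$ with $g \notin Q$; because $Q$ is generalized central strict, $c \in Q$ would force $g = GC(c) \in Q$, so $c = ab \notin Q$. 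Since $Q$ is both a left and right ideal, the factorisation $c = ab$ then forces both $a \notin Q$ and $b \notin Q$. Together with $b \in I$ this gives $I \not\subseteq Q$, so $Q \in \Sigma(R) - H(I)$ while $a \notin Q$, contradicting $a \in K(\Sigma(R) - H(I))$.

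The main obstacle is to guarantee the non-triviality $g = GC(ab) \neq 0$, equivalently that one can choose $b \in I$ so that $ab$ is non-nilpotent; otherwise the singleton $M$ contains $0$ and the Separation Theorem does not apply. I expect to overcome this by exploiting the quasi-proper involution of $R$ (Proposition~\ref{s4pr1}) together with the partial comparability output of Proposition~\ref{s5pr5}, which lets me replace a bad $b$ with a modified element---built from $b,b^{\ast},a,a^{\ast}$---still lying in $I$ but whose product with $a$ carries a non-zero generalized central cover. This replacement step, rather than the primality bookkeeping or the application of Theorem~\ref{s6tm3}, is where the real care is needed; the remainder is routine.
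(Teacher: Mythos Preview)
Your forward inclusion is fine and is essentially a cleaned-up version of the paper's argument: the paper takes $y\in I\setminus Q$ and argues from $yx=0\in Q$, whereas you wrap this into the single observation $Ann(I)\cdot I\subseteq Q$ and invoke primality of $Q$ on the two generalized central strict ideals directly.

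The reverse inclusion, however, is where your plan diverges from the paper and where the gap lies. You already see the obstacle: the Separation Theorem requires a $GC$-set not containing $0$, and with the paper's convention that $GC(x)=0$ for every non-central nilpotent $x$, there is no reason your $g=GC(ab)$ should be non-zero. Your proposed repair---using the quasi-proper involution and Proposition~\ref{s5pr5} to manufacture a better $b'\in I$ with $ab'$ non-nilpotent---is not a concrete argument, and Proposition~\ref{s5pr5} does not help: its output $h=GC(z)$ suffers from exactly the same defect (if $z=xay$ is nilpotent then $h=0$). So as written the reverse inclusion is not proved.

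The paper sidesteps this problem completely by a much shorter argument that never invokes the Separation Theorem. Suppose $x\in K(\Sigma(R)-H(I))$ and $y\in I$. For any $Q\in\Sigma(R)$ either $I\not\subseteq Q$, in which case $x\in Q$ by hypothesis and hence $xy\in Q$; or $I\subseteq Q$, in which case $y\in Q$ and again $xy\in Q$. Thus $xy\in\bigcap_{Q\in\Sigma(R)}Q=\{0\}$, so $xy=0$. Since $y\in I$ was arbitrary, $x\in Ann(I)$. Note that this argument never looks at $GC(xy)$ at all, so the nilpotence issue simply does not arise; the work is instead carried by the global fact $\bigcap_{Q\in\Sigma(R)}Q=\{0\}$, which the paper uses freely here and again in Lemma~\ref{s7lm1}. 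You should replace your contrapositive-plus-Separation-Theorem strategy with this two-case argument.
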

 \begin{proof} By Proposition \ref{s7pr2}, $Ann(I)$ is a	generalized central strict ideal of $R$. Let $x\in Ann(I)$. Thus $Ix=0$. Now for any $Q$ with $I\nsubseteq Q$ there exists $y\in I$ such that $y\notin Q$. As $yx=0$. Therefore $yx\in Q$. So $x\in Q$ as $y\notin Q$. Hence $x\in Q$ for all $Q$ with 	$I\nsubseteq Q$. Thus $x\in \bigcap \{Q\in \sum(R)~|~I\nsubseteq Q \}$ which implies $Ann(I) \subseteq \bigcap \{Q\in \sum(R)~|~I\nsubseteq Q \}$. Let us prove that $\bigcap \{Q\in \sum(R)~|~I\nsubseteq Q \} \subseteq Ann(I)$. Suppose that $x\in \bigcap \{Q\in \sum(R)~|~I\nsubseteq Q \} $ but $x\notin Ann(I)$. Thus there exists $y\in I$ such that $xy\neq 0$. Now if $I\nsubseteq Q$ then $x\in Q$ and hence $xy\in Q$. If $I\subseteq Q $ then $y\in Q$ which gives $xy\in Q$. Therefore $xy\in \bigcap \{ Q~|~ Q\in \sum(R) \}=\{0\}$ which implies $xy=0$ which is a contradiction. Thus $x\in Ann(I)$. Hence $Ann(I)=\bigcap \{Q\in \sum(R)~|~I\nsubseteq Q \}$. We have
 	 $\bigcap \{Q\in \sum(R)~|~I\nsubseteq Q \} =\bigcap \{Q\in \sum(R)~|~Q\notin H(I) \}=\bigcap \{Q\in \sum(R)~|~Q\in \sum(R)-H(I) \} =K(\sum(R)-H(I))$. Thus $Ann(I)=K(\sum (R) -H(I))$.
 \end{proof}

 \begin{theorem}\label{s7tm21} Let $I$ be a generalized central strict ideal of a generalized  p.q.-Baer  $*$-ring $R$ then $I=K(H(I))$.
 \end{theorem}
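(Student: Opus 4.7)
The inclusion $I \subseteq K(H(I))$ is immediate from the definitions: for $x \in I$ and any $Q \in H(I)$ we have $x \in I \subseteq Q$, so $x$ lies in the intersection $K(H(I))$.

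For the reverse inclusion $K(H(I)) \subseteq I$, I plan to argue by contrapositive. Given $x \notin I$, the aim is to produce a prime generalized central strict ideal $Q \in H(I)$ with $x \notin Q$, which will force $x \notin K(H(I))$. The natural device is the separation theorem (Theorem \ref{s6tm3}), applied to a $GC$-set $M$ with $M \cap I = \emptyset$; the resulting prime $Q$ will satisfy $Q \cap M = \emptyset$, and if I can arrange $GC(x) \in M$ then generalized central strictness of $Q$ will give $x \notin Q$, because $x \in Q$ would force $GC(x) \in Q$, contradicting $Q \cap M = \emptyset$.

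The natural candidate is $M = \{GC(x)\}$. By Remark \ref{s3rm4} we have $GC(GC(x)) = GC(x)$, and $GC(x)$ is an idempotent, so $M$ is closed under multiplication and under $GC$; it is a $GC$-set provided $GC(x) \neq 0$. In the case $GC(x) \notin I$, the hypotheses of Theorem \ref{s6tm3} are met and it delivers a prime generalized central strict ideal $Q \supseteq I$ with $GC(x) \notin Q$; the argument of the preceding paragraph then yields $x \notin Q$, completing this case.

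The main obstacle is the residual case $x \notin I$ with $GC(x) \in I$, equivalent by Lemma \ref{s6lm1}(2) to $x \notin I$ and $x^n \in I$ for some $n \in \mathbb{N}$. Here $\{GC(x)\}$ meets $I$, so the separation theorem does not apply directly. My plan is to enlarge $M$ to the multiplicative $GC$-closure of $\{x, GC(x)\}$, and to use the primeness characterization of Theorem \ref{s6tm1} (primes correspond to maximal ideals of $B(R)$) together with the generalized central strictness of $I$ to build the desired prime $Q$ by hand, rather than through a single appeal to Theorem \ref{s6tm3}. I expect this reduction to be the technically delicate step, since it is precisely where the asymmetry between ``$x \in I \Rightarrow GC(x) \in I$'' and its partial converse in Lemma \ref{s6lm1}(2) must be controlled.
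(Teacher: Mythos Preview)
Your argument in the case $GC(x)\notin I$---taking $M=\{GC(x)\}$ and applying Theorem~\ref{s6tm3}---is exactly the paper's proof. The paper, however, does not treat your residual case at all: after assuming $x\in K(H(I))$ with $x\notin I$ and writing $e=GC(x)$, it simply asserts ``We have $e\notin I$'' without justification and then runs the separation argument with $F=\{e\}$. So the step you have singled out as delicate is precisely the one the paper takes for granted.

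Your sketch for the residual case does not close the gap. A $GC$-set must avoid $0$, and by the paper's own convention $GC(x)=0$ whenever $x$ is non-central and nilpotent; in that situation no $GC$-set can contain $GC(x)$. Even when $GC(x)\neq 0$, any enlargement of $M$ that still contains $GC(x)$ meets $I$ by hypothesis, so Theorem~\ref{s6tm3} is inapplicable. Passing to $B(R)$ via Theorem~\ref{s6tm1} fares no better: once $GC(x)\in I\cap B(R)$, it lies in every maximal ideal of $B(R)$ extending $I\cap B(R)$, and the Boolean level cannot distinguish $x$ from $GC(x)$. In short, you have correctly located a lacuna that the paper glosses over; neither your outline nor the paper's bare assertion resolves it, and Lemma~\ref{s6lm1}(2) only yields $x^n\in I$, not $x\in I$, so the implication ``$x\notin I\Rightarrow GC(x)\notin I$'' genuinely requires additional argument or hypothesis.
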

 \begin{proof} We have for any $GC$-set $M$ with $M\bigcap I= \phi$, there exists generalized prime central strict ideal $Q$ such that $I\subseteq Q,$ and $ Q\bigcap M=\phi $. Hence $H(I) \neq \phi$. Thus by definition of $H$ and $K$  we have $I\subseteq K(H(I))$. Let us prove that $K(H(I)) \subseteq I$. Let $x\in K(H(I))$ and $x\notin I$. Suppose  $GC(x)=e$. We have $e\notin I$. As $x\in K(H(I))$, so $e\in K(H(I))$. Take $F=\{e\}$ which is a $GC$-set such that $F\bigcap I= \phi$. By separation theorem there exists a generalized prime central strict ideal $Q'$ such that $I\subseteq Q' , Q'\bigcap F=\phi $. Therefore $e\notin Q'$ and $I\subseteq Q'$. Thus $e\notin K(H(I))$, a contradiction. So $K(H(I)) \subseteq I$ and hence $I=K(H(I))$.
 \end{proof}	
 \begin{corollary}\label{s7cr1}	 Let $R$ be a generalized  p.q.-Baer  $*$-ring and $a,b\in R$ then $<GC(a)>\bigcap <GC(b)>  =  <GC(a)GC(b)>$. 
 \end{corollary}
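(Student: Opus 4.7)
The plan is to reduce the statement to an elementary fact about central idempotents and then verify the two inclusions separately. Set $e = GC(a)$ and $f = GC(b)$; by construction both are central projections in $R$. Since $R$ has unity (by Proposition \ref{s4pr2} it does, being generalized p.q.-Baer), the principal ideals generated by central elements satisfy $\langle e\rangle = eR$, $\langle f\rangle = fR$, and $\langle ef\rangle = efR$, and each of these is a generalized central strict ideal by Lemma \ref{s6lm1}(1).

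For the inclusion $\langle ef\rangle \subseteq \langle e\rangle \cap \langle f\rangle$, I would simply observe that any element of $efR$ can be written as $efr = e(fr) \in eR$ and, using centrality of $e$, also as $efr = f(er) \in fR$. For the reverse inclusion, take $x \in eR \cap fR$, say $x = er_1 = fr_2$. Then $ex = e^2 r_1 = er_1 = x$ and $fx = f^2 r_2 = fr_2 = x$, so $x = ex = e(fx) = efx \in efR$. This closes the argument.

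An alternative (and slightly more in line with the surrounding sheaf-theoretic material) is to invoke Theorem \ref{s7tm21}: both $\langle e\rangle \cap \langle f\rangle$ and $\langle ef\rangle$ are generalized central strict ideals, so each equals the kernel of its hull, and using property (5) preceding Proposition \ref{s7pr1} we get
\begin{equation*}
H(\langle e\rangle \cap \langle f\rangle) = H(\langle e\rangle) \cup H(\langle f\rangle) = H(\langle e\rangle \langle f\rangle) = H(\langle ef\rangle),
\end{equation*}
from which the equality of ideals follows by applying $K$. I would probably present only the direct proof, since it is one line once centrality is used, and merely mention the hull-kernel route as a remark if desired.

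There is essentially no obstacle here; the only thing to be careful about is writing the argument symmetrically so that the centrality of both $e$ and $f$ is visible in each step, and flagging the appeal to Lemma \ref{s6lm1}(1) so that the reader sees why the expressions $\langle e\rangle$, $\langle f\rangle$, $\langle ef\rangle$ reduce to $eR$, $fR$, $efR$ in the first place.
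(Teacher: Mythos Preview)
Your direct argument is correct and is more elementary than the route the paper actually takes. The paper proves the corollary by the hull--kernel method you describe as an alternative: it shows directly the two inclusions
\[
H(\langle GC(a)\rangle \cap \langle GC(b)\rangle)\subseteq H(\langle GC(a)GC(b)\rangle)
\quad\text{and}\quad
H(\langle GC(a)GC(b)\rangle)\subseteq H(\langle GC(a)\rangle \cap \langle GC(b)\rangle),
\]
the second by using that each $Q'\in\Sigma(R)$ is prime among generalized central strict ideals, and then applies Theorem~\ref{s7tm21} ($I=K(H(I))$) to conclude. So what you present as a remark is in fact the paper's main proof.

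Your primary proof, by contrast, never leaves the ring: once $e=GC(a)$ and $f=GC(b)$ are identified as central projections in a ring with unity, the identity $eR\cap fR=efR$ is immediate from idempotency and centrality, exactly as you wrote. This buys simplicity and avoids any dependence on the separation theorem or on $K\circ H$ being the identity on generalized central strict ideals. The paper's approach, on the other hand, has the virtue of illustrating the hull--kernel machinery just developed, and makes explicit that the equality is a reflection of the identity $H(I\cap J)=H(IJ)$ at the level of $\Sigma(R)$. Either proof is fine; if you keep the direct one, it would be natural to note afterward that the same conclusion follows from Theorem~\ref{s7tm21}, since that is presumably why the statement is placed as a corollary.
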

 \begin{proof} First we prove that  $H(<GC(a)>\bigcap <GC(b)>) =H(<GC(a)GC(b)>)$. Let $Q\in H(<GC(a)>\bigcap <GC(b)>) $. Therefore   $<GC(a)>\bigcap <GC(b)> \subseteq Q$. Hence $<GC(a)GC(b)> =<GC(a)><GC(b)>\subseteq <GC(a)>\bigcap <GC(b)>\subseteq Q$. Therefore $Q\in H(<GC(a)GC(b)>) $. Thus $H(<GC(a)>\bigcap <GC(b)>) \subseteq H(<GC(a)GC(b)>)$.	Let $Q'\in H(<GC(a)GC(b)>) $. This gives $<GC(a)GC(b)>\subseteq Q'$, that is $<GC(a)><GC(b)>\subseteq Q'$. As $Q'$ is a generalized prime central strict ideal, we have $<GC(a)>\subseteq Q'$ or $<GC(b)>\subseteq Q'$. Therefore  $<GC(a)>\bigcap <GC(b)> \subseteq Q'$. Hence $Q' \in H(<GC(a)>\bigcap <GC(b)>)$.  Therefore $H(<GC(a)>\bigcap <GC(b)>)=H(<GC(a)GC(b)>)$. By Theorem  \ref{s7tm21},  $<GC(a)>\bigcap <GC(b)> = <GC(a)GC(b)>$. 
 \end{proof}	
 According to Hong et al. \cite{Hong}, a ring $R$ is said to have the {\it right annihilator condition} (or right a.c.), if for any $a,b\in R$ there exists $c\in R$,  such that $r(aR)\bigcap r(bR) =r(cR)$. Rings with left a.c. are defined similarly. A ring $R$ is said to have a.c. if $R$ has left and right a.c..
 \begin{definition} A ring $R$ is said to have a {\it generalized  right annihilator condition} if for any $a,b\in R$ there exists $c\in R$ and $m,n\in \mathbb{N}$  such that $r(aR)^n \bigcap r(bR)^m =r(cR)$. Similarly, a ring with a {\it generalized  left  annihilator condition} is defined. A ring $R$ is said to have a {\it  generalized a.c.} if $R$ has generalized left a.c. and generalized right a.c..
 \end{definition}
 In \cite{Hong}, authors proved a result regarding the annihilator condition for  rings. 
 Next result shows that generalized p.q.-Baer $*$-rings have the right annihilator condition.
 \begin{lemma}\label{s7lm1} Let $R$ be a  generalized  p.q.-Baer  $*$-ring then it has a generalized  right annihilator condition.
 \end{lemma}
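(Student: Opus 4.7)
The plan is to reduce everything to central projections. Since $R$ is a generalized p.q.-Baer $*$-ring, Proposition \ref{s3pr3} (or equivalently the defining property together with Proposition \ref{s3pr6}) gives, for each $a \in R$, a central projection $g_a$ and an integer $n \in \mathbb{N}$ such that $r(aR)^n = g_a R$. Apply this twice: fix $a, b \in R$ and obtain central projections $g_a, g_b$ with $r(aR)^n = g_a R$ and $r(bR)^m = g_b R$ for suitable $m, n \in \mathbb{N}$.

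Next I would compute the intersection. If $y \in g_a R \cap g_b R$, then $y = g_a y = g_b y$, so $y = g_a g_b y \in g_a g_b R$; conversely any $g_a g_b r$ lies in both $g_a R$ and $g_b R$ since $g_a$ and $g_b$ are central and idempotent. Hence
\begin{equation*}
r(aR)^n \cap r(bR)^m \;=\; g_a R \cap g_b R \;=\; g_a g_b R.
\end{equation*}

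Finally I would exhibit the element $c$. Set $c = 1 - g_a g_b$, which is a central projection since $g_a g_b$ is. Because $c$ is central, for any $y \in R$ the condition $c R y = 0$ is equivalent to $c y = 0$ (using $1 \in R$ in one direction and centrality of $c$ in the other). Hence $r(cR) = \{y : cy = 0\} = (1-c)R = g_a g_b R$, which matches the intersection computed above, so $r(aR)^n \cap r(bR)^m = r(cR)$ and $R$ has the generalized right annihilator condition. There is no real obstacle here: the whole argument rests on the centrality of the annihilator-generators supplied by Proposition \ref{s3pr3}, after which the usual lattice-of-central-projections calculation closes the proof.
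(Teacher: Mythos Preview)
Your proof is correct and follows the same outline as the paper: obtain central projections generating $r(aR)^n$ and $r(bR)^m$, compute their intersection, and identify $c$. The one difference is in how the intersection is computed. The paper writes $r(xR)^n=(1-GC(x))R$ and $r(yR)^m=(1-GC(y))R$ and then invokes Corollary~\ref{s7cr1} (which rests on the separation theorem via Theorem~\ref{s7tm21}) to conclude $\langle 1-GC(x)\rangle\cap\langle 1-GC(y)\rangle=\langle(1-GC(x))(1-GC(y))\rangle$, arriving at $c=e=GC(x)+GC(y)-GC(x)GC(y)$. You compute $g_aR\cap g_bR=g_ag_bR$ directly from centrality and idempotence of $g_a,g_b$, which is the one-line elementary calculation that Corollary~\ref{s7cr1} is overkill for in this special case. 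Your $c=1-g_ag_b$ is exactly the paper's $e$ once one substitutes $g_a=1-GC(a)$, $g_b=1-GC(b)$. So the two arguments are the same in substance; yours is slightly more self-contained.
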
	
 \begin{proof} Let $x,y\in R$. As $R$ is {\it generalized  p.q.-Baer  $*$-ring}, there exists $m,n\in \mathbb{N}$  such that $r(xR)^n=(1-GC(x))R$ and $r(yR)^m=(1-GC(y))R$. We have $1-GC(x) , 1-GC(y) $ are central projections, therefore $GC(1-GC(x))=1-GC(x)$ and $GC(1-GC(y))=1-GC(y)$. Thus by corollary \ref{s7cr1}, $<1-GC(x)>\bigcap <1-GC(y)> = <(1-GC(x))(1-GC(y))> =<1-( GC(x)+GC(y)-GC(x)GC(y))>= <1-e>$ where $e=GC(x)+GC(y)-GC(x)GC(y)$. Hence $r(xR)^n \bigcap r(yR)^m =r(eR)$.
 \end{proof}	
 
 It is known that for a commutative ring $R$ with unity, the $Spec(R)$ is a $T_0$ space. 
 In the following result, we prove that if $R$ is a generalized  p.q.-Baer  $*$-ring then $\sum(R) $ with Hull-Kernel topology is a $T_2$ space. 
 \begin{theorem}\label{s7tm3}If $R$ is  generalized  p.q.-Baer  $*$-ring then $\sum(R) $ with Hull-Kernel topology is a zero-dimensional Hausdorff space. 	
 \end{theorem}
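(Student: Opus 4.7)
The plan is to exhibit a basis of clopen sets on $\Sigma(R)$ indexed by the central projections $B(R)$, and then to separate any two distinct points of $\Sigma(R)$ by two such clopens; this will deliver zero-dimensionality and the Hausdorff property simultaneously.

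The first step is to verify that $P(e)$ is clopen for every $e\in B(R)$. No proper ideal can contain both $e$ and $1-e$, and by the dichotomy recorded just after Lemma \ref{s6lm2} every prime generalized central strict ideal contains at least one of them. So $\Sigma(R)=P(e)\sqcup P(1-e)$, and since $P(1-e)=H(e)$, each piece is simultaneously open and closed.

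Next I would show that $P(x)=P(GC(x))$ for every $x\in R$, so that $\{P(e):e\in B(R)\}$ refines the basis $\{P(x):x\in R\}$ of Proposition \ref{s7pr1}. The inclusion $P(GC(x))\subseteq P(x)$ is item (3) in the list preceding Proposition \ref{s7pr1}. The reverse amounts to the semi-prime type equivalence $GC(x)\in Q\Rightarrow x\in Q$ for $Q\in\Sigma(R)$, already tacitly used in the proof of Proposition \ref{s7pr1}. I would justify it using the decomposition $x=x\,GC(x)+x(1-GC(x))$: once $GC(x)\in Q$, the first summand is in $Q$, and the second is nilpotent since $(x(1-GC(x)))^n=x^n(1-GC(x))=x^n-x^n GC(x)=0$ by centrality of $GC(x)$ and the defining identity $x^n GC(x)=x^n$; then primality of $Q$ against the generalized central strict ideal generated by the nilpotent places it in $Q$ as well. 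With $P(x)=P(GC(x))$ in hand, $\Sigma(R)$ acquires a basis of clopens, hence is zero-dimensional.

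Finally, for Hausdorffness, given distinct $Q_1,Q_2\in\Sigma(R)$ pick $x$ in exactly one of them, say $x\in Q_1\setminus Q_2$. Let $e=GC(x)$. The defining property of a generalized central strict ideal gives $e\in Q_1$, while the equivalence from the previous paragraph forces $e\notin Q_2$. Thus $Q_1\in H(e)=P(1-e)$ and $Q_2\in P(e)$, disjoint clopens from the first step. The main obstacle I expect is exactly the semi-prime type equivalence $x\in Q\iff GC(x)\in Q$ for $Q\in\Sigma(R)$: one direction is immediate from the definition of a generalized central strict ideal, but the reverse requires a careful application of primality to the nilpotent remainder $x(1-GC(x))$, and tracking the non-commutative interaction between nilpotents and generalized central strict ideals is the delicate point.
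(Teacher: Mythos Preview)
Your proposal is correct and matches the paper's approach: both establish $P(x)=\Sigma(R)\setminus P(1-GC(x))$ and then read off zero-dimensionality and Hausdorffness from the dichotomy $\Sigma(R)=P(e)\sqcup P(1-e)$ for central projections, separating distinct $Q_1,Q_2$ via $e=GC(x)$ for any $x\in Q_1\setminus Q_2$. The paper simply asserts the equivalence $Q\in P(x)\iff GC(x)\notin Q$ without comment; your nilpotent-remainder idea for the nontrivial direction is the right one---with $e=GC(x)$ central, the two-sided ideal $J=(1-e)RxR$ satisfies $J^n=(1-e)R(xR)^n=0$, every element of $J$ is nilpotent with $GC=0$ so $J$ is a generalized central strict ideal, and iterating primality on $J^n=0\subseteq Q$ forces $J\subseteq Q$, hence $x(1-e)\in Q$.
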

 \begin{proof} For any $x\in R $ and  $Q\in \sum(R)$ we have $Q\in P(x)$ if and only if $GC(x)\notin Q$ if and only if $1-GC(x)\in Q$ if and only if $Q\notin P(1-GC(x))$ if and only if $Q\in \sum(R)-P(1-GC(x))$. This gives $P(x)= \sum(R)-P(1-GC(x))$. Thus basis elements of Hull-Kernel topology on $\sum(R)$ are open and closed sets. Thus $\sum(R)$ with Hull-Kernel topology is zero-dimensional space.  To prove this is  Hausdorff space. Let $A ,B$ be distinct generalized  prime central strict ideals of $R$. Take $x \in A-B$. Therefore $x \in A$ and $x \notin B$. Hence $B\in P(x)=P(GC(x))$ and $A\in P(1-GC(x))$. So $P(GC(x))$ and $P(1-GC(x))$ are disjoint open sets containing $B$ and $A$ respectively. Hence $\sum(R) $ with Hull-Kernel topology is a zero-dimensional Hausdorff space.
 \end{proof}	
 In general, $MinSpec(R)$ is not compact even if $R$is a commutative reduced ring with unity. However, the space $\sum(R) $ with the hull-kernel topology is compact, when $R$ is a p.q.-Baer $*$-ring even though the elements of $\sum(R) $ are incomparable. The compactness of space with all elements incomparable plays a special role in the case of rings of continuous functions (see \cite{Hen}). 
 The following is an analogous result for a generalized p.q.-Baer $*$-rings.
 
 \begin{theorem}\label{s7tm4}If $R$ is generalized  p.q.-Baer  $*$-ring then; 
 	\begin{enumerate}
 		\item $\sum(R) $ with Hull-Kernel topology is  compact;
 		\item  The map $\phi :  \sum(R) \rightarrow \sum(B(R)),~\phi(Q)=Q\bigcap B(R)$ is a continuous bijection. 
 	\end{enumerate}	 
 \end{theorem}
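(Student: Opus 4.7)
The plan is to handle the two parts in turn, both exploiting the bijective correspondence between prime generalized central strict ideals of $R$ and maximal ideals of the Boolean algebra $B(R)$ furnished by Theorem \ref{s6tm1}. For part (1), I would argue compactness from the basis $\{P(x) : x \in R\}$ of Proposition \ref{s7pr1}. Let $\{P(x_\alpha)\}_{\alpha \in A}$ be a basic open cover of $\sum(R)$, so that no $Q \in \sum(R)$ contains every $x_\alpha$. Since $x \in Q$ forces $GC(x) \in Q$ for any generalized central strict $Q$, Theorem \ref{s6tm1} implies that no maximal ideal of $B(R)$ contains $\{GC(x_\alpha) : \alpha \in A\}$; hence the Boolean ideal generated by these central projections is all of $B(R)$, and some finite join $GC(x_{\alpha_1}) \vee \cdots \vee GC(x_{\alpha_n}) = 1$ holds. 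For any $Q \in \sum(R)$, properness of $Q \cap B(R)$ then forces $GC(x_{\alpha_i}) \notin Q$ for some $i$, whence $x_{\alpha_i} \notin Q$ and $Q \in P(x_{\alpha_i})$, producing the required finite subcover.

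For part (2), that $\phi$ lands in $\sum(B(R))$ is exactly Theorem \ref{s6tm1}, and continuity is immediate: a basic open set of $\sum(B(R))$ has the form $\{M : e \notin M\}$ for some $e \in B(R)$, and its preimage under $\phi$ is precisely $P(e)$, which is basic open in $\sum(R)$. For surjectivity, given $M \in \sum(B(R))$, I would verify that the two-sided ideal $RM = \sum_{e \in M} Re$ is a proper generalized central strict ideal of $R$, then invoke the separation theorem (Theorem \ref{s6tm3}) with the $GC$-set $\{1\}$ to produce a prime generalized central strict ideal $Q \supseteq RM$ with $1 \notin Q$; maximality of $M$ combined with Theorem \ref{s6tm1} then forces $\phi(Q) = Q \cap B(R) = M$. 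For injectivity, if $Q_1 \cap B(R) = Q_2 \cap B(R)$ and $x \in Q_1$, then $GC(x) \in Q_2$, and I intend to recover $x \in Q_2$ by splitting $x = x\,GC(x) + x(1 - GC(x))$: the first summand lies in $\langle GC(x) \rangle \subseteq Q_2$, while $y := x(1 - GC(x))$ satisfies $y^n = x^n - x^n GC(x) = 0$ (for the $n$ supplied by $GC(x)$) and hence is nilpotent, so should be absorbed into $Q_2$ by an appropriate primeness argument.

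The main obstacles are (i) verifying that $RM$ is generalized central strict in the surjectivity step, and (ii) completing the nilpotent-absorption step in the injectivity step. For (i), the difficulty is that $GC$ does not distribute over sums pointwise: while $GC(r_i e_i) = e_i\,GC(r_i) \leq e_i$ lies in $M$ by Proposition \ref{s3pr4}(2) together with downward closure of the Boolean ideal $M$, concluding $GC(\sum_i r_i e_i) \in M$ requires passing through Corollary \ref{s6cr1}, which bounds $GC$ only of a sum of appropriate powers, so the argument must pick a suitably large exponent and exploit centrality of the $e_i$. For (ii), the natural tool is primeness of $Q_2$ applied to $\langle y \rangle$ and $\langle GC(x) \rangle$ (both generalized central strict by Lemma \ref{s6lm1}(1) and closure under $GC$ of a nilpotent's ideal), since $GC(x)$ central with $y\,GC(x) = 0$ gives $\langle y \rangle \cdot \langle GC(x) \rangle = 0 \subseteq Q_2$; however primeness only guarantees one of the two ideals lies in $Q_2$, and the trivially-true option $\langle GC(x) \rangle \subseteq Q_2$ is unhelpful. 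Closing this gap will likely require invoking the maximality of $Q_2 \cap B(R)$ --- which places $1 - GC(x)$ outside $Q_2$ --- or, more cleanly, establishing separately that every nilpotent element of $R$ lies in every prime generalized central strict ideal (a semiprime-type property).
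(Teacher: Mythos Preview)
Your compactness argument is correct and takes a different route from the paper's. The paper does not pass through $B(R)$ for part~(1): it shows that every open set in $\sum(R)$ has the form $P(I)$ for some generalized central strict ideal $I$, so that a cover $\{P(A_i)\}$ satisfies $\sum(R)=P\bigl(\sum_i A_i\bigr)$, forcing $\sum_i A_i=R$ and hence $1=\sum_{i\in F}a_i$ for a finite set $F$; the $P(A_i)$ with $i\in F$ then cover. Your passage through maximal ideals of $B(R)$ is equally valid and arguably more conceptual, since it makes the Stone--space heuristic explicit.

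For part~(2), obstacle~(i) is lighter than you fear: for $x=\sum r_i e_i$ with $e_i\in M$, set $e=\bigvee_i e_i\in M$ (Boolean ideals are closed under finite joins); then $xe=x$ gives $GC(x)\le e\in M$ directly from the definition of $GC$, with no need for powers or Corollary~\ref{s6cr1}, and properness of $RM$ follows the same way. Obstacle~(ii), however, is the real crux, and the paper's route does not avoid it. The paper writes down an explicit inverse $\psi(M)=MR$ and verifies $\psi\phi(Q)=Q$ by declaring that a prime generalized central strict ideal $Q$ is ``restricted'', so that $(Q\cap B(R))R=Q$. Unwinding that assertion for $x\in Q$ produces precisely your decomposition $x=x\,GC(x)+x(1-GC(x))$ and the same demand that the nilpotent second summand be absorbed. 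Thus the two approaches converge on the statement you isolate at the end---that every nilpotent lies in every prime generalized central strict ideal, equivalently that $GC(x)\in Q$ forces $x\in Q$, equivalently that $P(x)=P(GC(x))$---and this is exactly what the paper also invokes without proof in the preceding Hausdorff argument. So your diagnosis of the gap is accurate; the paper simply asserts the needed fact rather than closing it, and your two proposed fixes (maximality of $Q_2\cap B(R)$, or a direct semiprime-type lemma) are the right places to look.
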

 \begin{proof} $(1):$ Suppose $V$ is an open set in $\sum(R) $. So $V=\bigcup _{i\in J}P(x_i)=\bigcup _{i\in J}P(e_i)$ where $e_i=C(x_i)$. Let $I$ be the ideal generated by $\{e_i~|~i\in J\}$. We prove that $I$ is a central strict ideal such that $V=P(I)$. If $x\in I$, then $x=\sum e_ir_i$ for some $r_i\in R$ with $r_i=0$ except finitely many $i$. We have $GC(x)=GC(x)(\vee (e_iGC(r_i)))\in I$. So $I$ is a central strict ideal of $R$. If $Q\in V$ , then $Q\in P(x_i)$ for some $i\in J$ and hence $e_i\notin Q$. Consequently $I\nsubseteq Q$, and thus $Q\in P(I)$. Let us prove that $P(I)\subseteq V $. If  $Q'\in P(I)$ then  $I\nsubseteq Q'$. Therefore there exists $e_i\in I$ such that $e_i\notin Q'$ that is $Q'\in P(e_i)\subseteq V$. Hence $V=P(I)$. To prove the compactness of $\sum(R) $, it is sufficient to prove that every open cover of $\sum(R) $ having open sets of the form $P(A_i)$, where $A_i$ are generalized   central strict ideals that admits a finite sub cover. Let $\bigcup \{P(A_i)/i\in \textit{I}\}$ be an open cover of $\sum(R) $ where $\textit{I}$ is an indexing set and $A_i$ are generalized   central strict ideals of $R$. Then $\sum(R)= P(\sum _{i\in \textit{I}} A_i) $, where $\sum _{i\in \textit{I}} A_i $ is the ideal  sum of the generalized   central strict ideals $A_i $. Now $\sum _{i\in \textit{I}} A_i \nsubseteq Q$ for any $Q\in \sum(R) $. Thus $\sum _{i\in \textit{I}} A_i=R$. So $\sum a_i=1  , a_i\in A_i$, and only finitely many $a_i\neq 0$. Let $\textit{F}$ be the set of $i\in \textit{I}$ for which $a_i\neq 0$. Then $\sum _{i\in \textit{F}} a_i=1$. Consequently $\sum(R)=P(\sum _{i\in \textit{F}} A_i) $. Hence $\{P(A_i)/i\in \textit{F}\}$ is a finite subcover of $\{P(A_i)~|~i\in \textit{I}\}$. Hence $\sum(R) $ with Hull-Kernel topology is  compact.\\ 
 	$(2):$ Define  $\psi : \sum(B(R))\rightarrow \sum(R)$	  by $\psi (M)=MR$, where $M\in \sum(B(R)$. Clearly, the map is well defined. Let $P\in \sum(R)$, then $P$ is a minimal prime generalized   central strict ideal of $R$. Since  $\phi(P)=P\bigcap B(R)\in \sum(B(R)$, therefore $\psi(\phi(P))=\psi(P\bigcap B(R))=(P\bigcap B(R))R=P$ as $P$ is a restricted ideal. Let $M\in \sum(B(R))$, then $\phi (\psi(M))=\phi(MR))=MR\bigcap B(R)\in \sum(B(R))$. This gives $M\subseteq \phi(\psi(M))$. By maximality of $M$, we get $\phi (\psi(M))=M$. Thus $\phi$ and $\psi$ becomes inverses of each other. So $\phi$ and $\psi$ are bijections. To prove the continuity of $\phi$. Let $X$ be an open subset of $\sum(B(R))$.  Then $X=\{M\in \sum(B(R))/Y \nsubseteq M\}$ for some $Y\subseteq B(R)$. Hence $\phi^{-1}(X)=\{\phi^{-1}(M)\in \sum(R)/M\in X\}=\{Q\in\sum(R)/Y\nsubseteq Q\}$. Thus $\phi^{-1}(X)$ is an open set in $\sum(R)$. So $\phi $ is a continuous map. 
 \end{proof}		
 	
	The spectrum of a generalized  p.q.-Baer  $*$-ring can be used to a deep understanding of the sheaf Representation of such rings.
	\begin{definition} A triple $<S,\pi,X>$ consisting of two given sets $X$ and $S$ and $\pi:S\longrightarrow X$ a function of $S$ onto $X$, is called a {\it sheaf of $*$-rings} over the base set $X$ if the following conditions are satisfied.\\
		$(1)$ $\pi^{-1}(x)$ is a $*$-ring for each $x\in X$.\\
		$(2)$ $S$ and $X$ are topological spaces.\\ 
		$(3)$ Each point in $S$ has an open neighbourhood which is homeomorphically mapped onto an open set in $X$ under $\pi$.\\ $(4)$ The functions $(s,t) \rightarrow s+t$ and $(s,t) \rightarrow st$ from the set $S\Delta S=\{(x,y)\in S\times S~|~\pi (x)=\pi(y)\}$ into $S$ are continuous. Here $S\Delta S$ with the subspace topology induced from the product space $S\times S$.\\
		$(5)$ The function which assigns to every $x\in X$, the identity of $\pi^{-1}(x)$ is continuous.\\
		$(6)$ For every $x\in X$ ,the function $y\rightarrow y^*$ from $\pi^{-1}(x)$ to $S$ is continuous. If  for each $x\in X$, $\pi^{-1}(x)$ is a generalized  p.q.-Baer  $*$-ring then we say that $<S,\pi,X>$ is a sheaf of a generalized  p.q.-Baer  $*$-ring.
		
  \end{definition}	
  	\begin{definition} If $F=<S,\pi,X>$ is a sheaf of $*$-ring. For $x\in X$, the set $\pi^{-1}(x)$ is called a {\it stalk}. Any continuous function $\sigma :X\rightarrow S$ for which $\pi\circ \sigma$ is the identity map of $X$ is called a {\it section} over $X$. The set of all sections over $X$ is a $*$-ring, denoted by $\Gamma(X,S)$.
  \end{definition}	
  	If $R$ is a {\it generalized  p.q.-Baer  $*$-ring} and $r\in R$, then $\hat{r}:\sum(R)\rightarrow\displaystyle\bigcup_{Q\in \sum (R)}{R/Q}$ is the function defined by $\hat{r}(Q)=r+Q$ is a homomorphism. Let $\tau $ denote the hull-kernel topology on $\sum (R)$ defined as before. If $A\in \tau $ then $\hat{r}(A)=\{\hat{r}(Q)~|~Q\in A\}$.
  
  	We denote the set of all sections of the sheaf by $\Gamma(\sum (R),S)$. Observe that $\hat {r}\in\Gamma(\sum (R),S)$.
  	
  	\begin{lemma} \label{s7lm1} Let $R$ be a generalized  p.q.-Baer  $*$-ring and $x,y\in R$. Then $y\in Q$ for every $Q\in P(x)$ if and only if $GC(x)GC(y)=0$.
  		\end{lemma}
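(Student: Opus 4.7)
My plan is to prove the biconditional using the Boolean-algebra characterization from Theorem~\ref{s6tm1}, namely that a generalized central strict ideal $Q$ is prime precisely when the contraction $Q \cap B(R)$ is a maximal (hence prime) ideal in the Boolean algebra $B(R)$ of central projections, combined with the separation theorem (Theorem~\ref{s6tm3}).

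For the ``only if'' direction I would argue by contradiction. Suppose $GC(x)GC(y) \neq 0$ and consider the singleton $F = \{GC(x)GC(y)\}$. Its single element is a nonzero central projection, so it is idempotent (hence multiplicatively closed), equals its own generalized central cover, and is distinct from $0$; thus $F$ is a $GC$-set disjoint from the generalized central strict ideal $\{0\}$. Theorem~\ref{s6tm3} then yields a prime generalized central strict ideal $Q$ with $GC(x)GC(y) \notin Q$. The primality of $Q \cap B(R)$ in $B(R)$ forces both $GC(x) \notin Q$ and $GC(y) \notin Q$. The containment $P(GC(x)) \subseteq P(x)$ from bullet~(3) preceding Proposition~\ref{s7pr1} now gives $Q \in P(x)$, so the hypothesis supplies $y \in Q$, which, since $Q$ is generalized central strict, forces $GC(y) \in Q$, contradicting the choice of $Q$.

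For the ``if'' direction, fix an arbitrary $Q \in P(x)$. From $0 = GC(x)GC(y) \in Q$ and the primality of $Q \cap B(R)$ in $B(R)$, at least one of $GC(x)$, $GC(y)$ lies in $Q$. I would exclude $GC(x) \in Q$ by combining $x \notin Q$ with Lemma~\ref{s6lm1}(2), which links $GC(x) \in Q$ to $x^n \in Q$ for some $n$, and with the closure properties from Lemma~\ref{s6lm4} applied to the generalized central strict ideal generated by $x$, to contradict the primality of $Q$. The surviving case $GC(y) \in Q$ then delivers $y \in Q$ via the symmetric chain of reasoning: Lemma~\ref{s6lm1}(2) gives $y^n \in Q$, and the same factorization argument promotes this to $y \in Q$. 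The main obstacle will be the semiprime-type passage from $GC(z) \in Q$ (equivalently $z^n \in Q$) to $z \in Q$ inside a prime generalized central strict ideal, since primality is defined only on generalized central strict ideal factorizations rather than on elements; I expect to close this gap by exhibiting, for each potential obstruction, a product of two generalized central strict ideals contained in $Q$ whose factors cannot both avoid $Q$, and then invoking Theorem~\ref{s6tm1} to translate the membership question into $B(R)$.
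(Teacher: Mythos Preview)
Your forward direction is correct. The paper argues it differently: it shows $GC(x)GC(y)\in Q$ for every $Q\in\sum(R)$ by splitting into the cases $Q\in P(x)$ and $Q\notin P(x)$, and then invokes $\bigcap_{Q\in\sum(R)}Q=\{0\}$ (which is Theorem~\ref{s7tm21} applied to $I=\{0\}$). Your contrapositive via the separation theorem is a legitimate alternative that ultimately rests on the same ingredient.

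For the converse you have put your finger precisely on the crux, and it is a genuine gap. The implication you need---that $GC(z)\in Q$ (equivalently $z^{n}\in Q$) forces $z\in Q$ for a prime generalized central strict ideal $Q$---is exactly what the paper asserts without justification: it writes ``$x\notin Q$ and hence $x^{n}\notin Q$ for all $n$'' and, at the end, ``$y^{m}\in Q$ that is $y\in Q$''. Your proposed remedy cannot close it: the two-sided ideal generated by a single element need not be generalized central strict, so there is no factorization of the required shape to feed into the primality hypothesis, and passing to $B(R)$ via Theorem~\ref{s6tm1} only controls central projections, not the element itself. Concretely, in $R=M_{2}(\mathbb{C})$ the only ideals are $\{0\}$ and $R$, so $\sum(R)=\{\{0\}\}$, and the nilpotent $z=\left(\begin{smallmatrix}0&1\\0&0\end{smallmatrix}\right)$ satisfies $GC(z)=0\in\{0\}$ while $z\notin\{0\}$; the implication you are trying to establish therefore fails outright in this example. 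You were right to be uneasy: the step you could not justify is exactly the one the paper takes for granted.
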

  	\begin{proof} Let $y\in Q$ for every $Q\in P(x)$. Therefore $GC(y)\in Q$	for every $Q\in P(x)$. That is  $GC(x)GC(y)\in Q$ for every $Q\in P(x)$. Also, if $Q\notin P(x)$ then $x\in Q$ implies $ GC(x)\in Q$ that is $ GC(x)GC(y)\in Q$. Hence $GC(x)GC(y)\in Q$ for any $Q\in \sum (R)$. As $\displaystyle\bigcap_{Q\in \sum (R)}{Q}=\{0\}$ we have $ GC(x)GC(y)=0$. Conversely, let $GC(x)GC(y)=0$. Suppose $Q\in P(x)$. Thus $x\notin Q$ and hence $x^n\notin Q$ for all $n$. Therefore $GC(x)\notin Q$.  Now $GC(x)GC(y)\in Q$ and $Q$ is prime generalized central strict ideal. Hence $GC(y)\in Q $ implies $ y^mGC(y)\in Q$ where $y^mGC(y)=y^m$. Therefore $ y^m\in Q $ that is $ y\in Q$ because $x\notin Q$ implies $x^n\notin Q$.
  \end{proof}
  Now, we determine the form of sections of the sheaf  and prove that all sections are injective.
  	\begin{theorem}\label{s7tm1 } Let $R$ be a  generalized  p.q.-Baer  $*$-ring and $F=<S,\pi,\sum (R)>$ be  a sheaf of $*$-rings as before. Then for every section $f$ of the sheaf $F$ over $S$ there exists $m\in\mathbb{N}$ such that $f^m$ has the form $\hat{r}$ for some $r\in R$. Moreover all sections are injective.  		
 \end{theorem}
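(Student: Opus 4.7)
The plan is to combine the compactness of $\Sigma(R)$ (Theorem \ref{s7tm4}) with the clopen basis $\{P(e) : e \in B(R)\}$ of central projections that arises from the identities $P(x) = P(GC(x)) = \Sigma(R) \setminus P(1-GC(x))$ noted in Theorem \ref{s7tm3}, together with a patching argument via pairwise orthogonal central projections. The first step is a local-representation observation: because $F$ is an \'etale space (sheaf axiom (3)), the locus where two sections agree is open, so for each $Q \in \Sigma(R)$ and any choice of representative $r_Q \in R$ with $f(Q) = r_Q + Q$, the section $\widehat{r_Q}$ agrees with $f$ on an open neighbourhood of $Q$. Using the clopen basis I may shrink this neighbourhood to $P(e_Q)$ for some central projection $e_Q \notin Q$, on which $f|_{P(e_Q)} = \widehat{r_Q}|_{P(e_Q)}$.

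Compactness of $\Sigma(R)$ then yields a finite subcover $P(e_1), \ldots, P(e_k)$ together with elements $r_1, \ldots, r_k \in R$ satisfying $f|_{P(e_i)} = \widehat{r_i}|_{P(e_i)}$. I would orthogonalize by setting $g_1 = e_1$ and $g_i = e_i(1-e_1)\cdots(1-e_{i-1})$ for $i \ge 2$; since $B(R)$ is a Boolean algebra the $g_i$ are pairwise orthogonal central projections with $g_1 + \cdots + g_k = e_1 \vee \cdots \vee e_k$ and $P(g_i) \subseteq P(e_i)$. The key point I expect to be the main obstacle is showing $e_1 \vee \cdots \vee e_k = 1$. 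I would argue by contradiction: if $g := e_1 \vee \cdots \vee e_k \neq 1$, then by Remark \ref{s3rm4} the singleton $\{1-g\}$ is a $GC$-set (closed under multiplication since $(1-g)^2 = 1-g$) disjoint from the zero ideal, so the separation theorem (Theorem \ref{s6tm3}) supplies a prime generalized central strict ideal $Q$ with $1-g \notin Q$; by the maximality of $Q \cap B(R)$ in $B(R)$ (Theorem \ref{s6tm1}) this forces $g \in Q$, hence $e_i \le g$ gives $e_i \in Q$ for every $i$, contradicting $\bigcup P(e_i) = \Sigma(R)$.

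With $\sum g_i = 1$ established, I would set $r = r_1 g_1 + r_2 g_2 + \cdots + r_k g_k$ and verify $\widehat{r} = f$ stalkwise. For each $Q$ there is a unique index $i$ with $Q \in P(g_i)$, so $g_i \notin Q$ while $g_j \in Q$ for $j \neq i$; hence $r + Q = r_i g_i + Q$. Combining this with $1 - g_i = \sum_{j \neq i} g_j \in Q$, which gives $r_i + Q = r_i g_i + Q$, one obtains $\widehat{r}(Q) = r_i + Q = f(Q)$. This yields $f = \widehat{r}$ globally, so in particular $f^m = \widehat{r^m}$ for every $m \in \mathbb{N}$, which is stronger than the stated conclusion. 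Finally, injectivity of any section is automatic from the defining property $\pi \circ f = \mathrm{id}_{\Sigma(R)}$: if $f(Q_1) = f(Q_2)$ as elements of $S$, then applying $\pi$ gives $Q_1 = Q_2$, so no separate verification is needed beyond unwinding the definition of a section.
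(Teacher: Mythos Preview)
Your argument is correct and in fact establishes the stronger conclusion $f=\hat{r}$ (so $m=1$ suffices), but the route is genuinely different from the paper's. The paper also begins with a finite cover $\Sigma(R)=\bigcup_{i=1}^{n}P(x_i)$ and local data $f|_{P(x_i)}=\widehat{r_i}|_{P(x_i)}$, but it does \emph{not} orthogonalize. Instead it works on overlaps: from $r_i-r_j\in Q$ for all $Q\in P(e_ie_j)$ it invokes Lemma~\ref{s7lm1} to obtain $GC(r_i-r_j)\,e_ie_j=0$, which by Proposition~\ref{s3pr3} only yields $(r_i-r_j)^{m}e_ie_j=0$ for some $m$ depending on $i,j$. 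This is where the exponent $m$ enters. The paper then uses a partition-of-unity style identity $\sum e_iy_i=1$ and sets $a=\sum e_ir_i^{m}y_i$, carrying out a direct computation to show $a-r_j^{m}\in Q$ on $P(x_j)$, hence $f^{m}=\hat{a}$.

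Your orthogonalization of the central projections sidesteps the overlap analysis entirely: once the $g_i$ are pairwise orthogonal with $\sum g_i=1$, the verification that $\widehat{\sum r_ig_i}$ agrees with $f$ is purely stalkwise and uses only that a prime generalized central strict ideal contains exactly one of a pair of complementary central projections. This is more elementary, avoids Lemma~\ref{s7lm1}, and never forces a power. What the paper's approach buys, by contrast, is a display of how the ``generalized'' annihilator condition $(xR)^ny=0\Leftrightarrow GC(x)y=0$ interacts with sections; your argument shows this machinery is unnecessary here. Your treatment of injectivity via $\pi\circ f=\mathrm{id}_{\Sigma(R)}$ is also cleaner than the paper's one-line appeal to injectivity of $\hat{r}$.
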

  	\begin{proof} Let $f:\sum (R) \rightarrow S$ be any section of sheaf $F$. Therefore $f(Q)=r+Q=\hat{r}(Q)$ for any $Q\in \sum (R)$ and for some $r\in R$. As $f$ and $\hat{r}$ are continuous, there exists an open set $P(x)$ such that $Q\in P(x)$ and $f(A)=\hat{r}(A)$ for every $A\in P(x)$ for some $x\in R$. Since $\sum (R)$ is compact,  $\sum (R)= \displaystyle\bigcup_{i=1}^{n}P(x_i)$ for some $x_1, \cdots x_n\in R$. Hence $f(Q)=\hat{r_i}(Q)$ for all $Q\in P(x_i)$ for some  $r_1, \cdots r_n\in R$.	Write $GC(x_i)=e_i,GC(x_j)=e_j$. We have $P(x_i)\bigcap P(x_j) =P(<x_i>)\cap P(<x_j>)=P(e_ie_j)$. Hence if $Q\in P(x_i)\bigcap P(x_j)$ then $f(Q)=\hat{r_i}(Q)=\hat{r_j}(Q)$. Hence $r_i+Q=r_j+Q$ implies $r_i-r_j\in Q$. Thus $r_i-r_j\in Q$ for every $Q\in  P(e_ie_j)$. So $GC(r_i-r_j)GC(e_ie_j)=0$. Therefore  $GC(r_i-r_j)e_ie_j=0$ implies $((r_i-r_j)R)^me_ie_j=0$ for some $m\in \mathbb{N}$. Hence  $(r_i-r_j)^me_ie_j=0$. Now  $\sum (R)= \displaystyle\bigcup_{i=1}^{n}P(x_i)=P(\sum <e_i>)$. So $\sum <e_i>=R$ and hence 	$\displaystyle\sum _{i=1}^{n}e_iy_i=1$ for some $y_i\in R$. As $(r_i-r_j)^me_ie_j=0$ we get that $ e_i(r_i-r_j)^my_ie_j=0$ for $i=1,2\cdots$. Thus $e_i(r_i^m+g(r_i,r_j)+r_j^m)y_ie_j=0$. So $e_i(r_i^m+g(r_i,r_j)+r_j^m)y_i(x_jR)^n=0$ for some $n\in \mathbb{N}$. Hence $e_i(r_i^m+g(r_i,r_j)+r_j^m)y_ix_j^n=0$. Therefore \begin{equation} \label{eq1} e_ir_i^my_ix_j^m= e_i(g(r_i,r_j)+r_j^m)y_ix_j^n. \end{equation}  	  		
  	Let $a=	\displaystyle\sum _{i=1}^{n}e_ir_i^my_i$. Therefore for $x_j^n,\acute{r}\in R, a\acute{r}x_j^n= (\displaystyle\sum _{i=1}^{n}e_ir_i^my_i) \acute{r}x_j^n	=(\displaystyle\sum _{i=1}^{n}e_ir_i^my_i x_j^n)\acute{r}= \acute{r}	\displaystyle\sum _{i=1}^{n}e_i(g(r_i,r_j)+r_j^m)y_ix_j^n =\acute{r}\{(\displaystyle\sum _{i=1}^{n}e_iy_i)r_j^mx_j^n+\displaystyle\sum _{i=1}^{n}e_ig(r_i,r_j)y_ix_j^n\}$ (by \ref{eq1}).   	
  	But 	$\displaystyle\sum _{i=1}^{n}e_iy_i=1$. This gives $a\acute{r}x_j^n=\acute{r}\{r_j^mx_j^n+\displaystyle\sum _{i=1}^{n}e_ig(r_i,r_j)y_ix_j^n\}
  =\acute{r}x_j^n\{r_j^m+\displaystyle\sum _{i=1}^{n}e_ig(r_i,r_j)y_i\}=\acute{r}x_j^nr_j^m+\acute{r}x_j^n\displaystyle\sum _{i=1}^{n}e_ig(r_i,r_j)y_i$. Hence $(a-r_j^m+\displaystyle\sum _{i=1}^{n}e_ig(r_i,r_j)y_i)\acute{r}x_j^n=0$. \\
  Therefore $(a-r_j^m+\displaystyle\sum _{i=1}^{n}e_ig(r_i,r_j)y_i)(Rx_j)^n=0$. That is $GC(a-r_j^m+\displaystyle\sum _{i=1}^{n}e_ig(r_i,r_j)y_i)e_j=0$. So $a-r_j^m+\displaystyle\sum _{i=1}^{n}e_ig(r_i,r_j)y_i)\in Q$ for every $Q\in P(x_j)$ and hence $a-r_j^m\in Q$ for every $Q\in P(x_j)$. Hence $a+Q=r_j^m+Q$. That is $\hat{a}(Q)= \hat{r_j^m}(Q)=f^m(Q)$. Thus $  \hat{a}=f^m$. As $\hat{r}$ is injective, we have $f$ in injective.
   \end{proof}
   
   A ring isomorphism $\phi: R \rightarrow \acute{R} $ is said to be {\it $*$-isomorphism} if $\phi(r^*)=(\phi(r))^*$ for any $r\in R$. Now we prove that every generalized  p.q.-Baer  $*$-ring is $*$-isomorphic to the $*$- ring of sections of the sheaf.
   \begin{theorem}\label{s7tm2} Every  generalized  p.q.-Baer  $*$-ring is $*$-isomorphic to $\Gamma(\sum (R),S)$.   
\end{theorem}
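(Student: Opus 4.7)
The plan is to show that the evaluation map $\Phi \colon R \to \Gamma(\sum(R), S)$ defined by $\Phi(r)(Q) = r + Q$ is a $*$-isomorphism. That $\Phi$ is a $*$-ring homomorphism is routine: addition, multiplication and involution on the sheaf are defined stalkwise via the quotient $*$-homomorphisms $R \to R/Q$, and each $\hat{r}$ is a section by construction. For injectivity, if $\Phi(r) = 0$ then $r \in Q$ for every $Q \in \sum(R)$; since $\{0\}$ is trivially a generalized central strict ideal, Theorem \ref{s7tm21} applied to $I = \{0\}$ yields $\{0\} = K(H(\{0\})) = K(\sum(R)) = \bigcap_{Q \in \sum(R)} Q$, and hence $r = 0$.

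Surjectivity is the main obstacle, and my strategy is a partition-of-unity refinement of the local-to-global argument used in the proof of Theorem \ref{s7tm1 }. Given $f \in \Gamma(\sum(R), S)$, continuity of $f$ together with the compactness of $\sum(R)$ (Theorem \ref{s7tm4}(1)) yields a finite basic open cover $\sum(R) = \bigcup_{i=1}^{n} P(x_i)$ and elements $r_i \in R$ with $f|_{P(x_i)} = \hat{r_i}$. Set $e_i = GC(x_i)$; Theorem \ref{s7tm3} identifies $P(x_i) = P(e_i)$, and the resulting identity $\sum(R) = \bigcup_i P(e_i) = P\bigl(\sum_i \langle e_i\rangle\bigr)$ forces $\bigvee_i e_i = 1$. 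The critical upgrade is the following: on $P(e_i e_j)$ the pointwise agreement $r_i - r_j \in Q$ holds for every $Q$, so Theorem \ref{s7tm21} applied to the generalized central strict ideal $\langle 1 - e_i e_j\rangle = K(P(e_i e_j))$ promotes this to the ring-level identity $(r_i - r_j)\,e_i e_j = 0$.

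With this identity in hand, replace $\{e_i\}$ by the orthogonal refinement $e_1' = e_1$ and $e_k' = e_k \prod_{j<k}(1-e_j)$; these are mutually orthogonal central projections with $e_i' \le e_i$ and $\sum_i e_i' = \bigvee_i e_i = 1$. Define $b = \sum_i e_i' r_i$ and verify $\hat{b} = f$ pointwise. For any $Q \in \sum(R)$, choose $k$ with $Q \in P(x_k) = P(e_k)$; using $\sum_i e_i' = 1$ we compute $b - r_k = \sum_i e_i'(r_i - r_k)$, and the identity $(r_i - r_k)e_i e_k = 0$ together with $e_i' \le e_i$ (so that $e_i'(1-e_i) = 0$) yields $e_i'(r_i - r_k)\,e_k = 0$. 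Since $\langle e_k\rangle \nsubseteq Q$ and $Q$ is a prime generalized central strict ideal, primeness forces $\langle e_i'(r_i - r_k)\rangle \subseteq Q$, i.e., $e_i'(r_i - r_k) \in Q$ for each $i$; hence $b - r_k \in Q$ and $\Phi(b)(Q) = b + Q = r_k + Q = f(Q)$, so $\Phi(b) = f$. The hardest step is precisely the algebraic upgrade $(r_i - r_j)\,e_i e_j = 0$, which allows the exact lift $b$ rather than only the $m$-th-power lift obtained in Theorem \ref{s7tm1 }.
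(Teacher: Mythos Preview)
Your argument is essentially correct and in fact supplies what the paper omits. The paper's own proof of this theorem simply asserts that $\phi$ is ``well defined and onto'' and then checks injectivity and the homomorphism properties; surjectivity is not argued, and the preceding Theorem~\ref{s7tm1 } only yields $f^{m}=\hat a$ for some $m$, not $f=\hat a$. Your partition-of-unity construction genuinely closes this gap: the decisive improvement over Theorem~\ref{s7tm1 } is the identity $(r_{i}-r_{j})\,e_{i}e_{j}=0$, obtained via Theorem~\ref{s7tm21} (equivalently Theorem~\ref{s7tm1} applied to $\langle e_{i}e_{j}\rangle$, since $K(P(e_{i}e_{j}))=\mathrm{Ann}(\langle e_{i}e_{j}\rangle)=(1-e_{i}e_{j})R$). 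This exact gluing relation, rather than the weaker $(r_{i}-r_{j})^{m}e_{i}e_{j}=0$ used in Theorem~\ref{s7tm1 }, is precisely what allows the orthogonal-refinement lift $b=\sum_{i}e_{i}'r_{i}$ to hit $f$ on the nose.

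One small repair: your final step ``primeness forces $\langle e_{i}'(r_{i}-r_{k})\rangle\subseteq Q$'' is not quite licensed, because the primeness hypothesis on $Q$ applies only to products of \emph{generalized central strict} ideals, and $\langle e_{i}'(r_{i}-r_{k})\rangle$ need not be one. The conclusion, however, follows more directly: from $e_{i}'(r_{i}-r_{k})e_{k}=0$ you get $e_{i}'(r_{i}-r_{k})=e_{i}'(r_{i}-r_{k})(1-e_{k})\in(1-e_{k})R$, and since $e_{k}\notin Q$ forces $1-e_{k}\in Q$ (a prime generalized central strict ideal contains exactly one of a central projection and its complement), you obtain $e_{i}'(r_{i}-r_{k})\in Q$ as desired.
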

  \begin{proof} Let  $\phi: R \rightarrow \Gamma(\sum (R),S)$ by $\phi(r)=\hat{r}$. Clearly $\phi$ is well defined and onto. Let $ a,b\in R$ and $Q\in \sum( R) $. Suppose that $\phi(a)=\phi(b)$, that is $\hat{a}=\hat{b}$. Thus $a-b\in Q$ for all $Q\in \sum (R)$. As $\displaystyle\bigcap_{P\in \sum(R)}P=\{0\}$, this implies that $a=b$. Hence $\phi$ is a one-one function. Further $\hat{(a+b)}(Q)=(a+b)+Q=(a+Q)+(b+Q)=\hat{a}(Q)+  \hat{b}(Q)=(\hat{a} +\hat{b})(Q)$. Therefore $\phi(a+b)=\phi(a)+\phi(b)$. Similarly, we can prove that $\phi(ab)=\phi(a)\phi(b)$. Since central strict ideals are closed under involution, $\hat{a^*}(Q)=a^*+Q=(a+Q)^*=(\hat{a}(Q))^*$. That is,  $\phi(a^*)=(\phi(a))^*$. Thus, $\phi$ is a $*$-isomorphism.
  \end{proof}
  
 \begin{remark}\label{s7rm1} If $I$ is generalized central strict ideal of  {\it generalized  p.q.-Baer  $*$-ring} then $I$ need not be equal to $\sum Re$, where the sum is taken over all central projections $e\in I$ as explained in the  following example. 
   \end{remark}  
    \begin {example} 
    Consider the ring $R=M_{2}(\mathbb Z_{2})\times M_{2}(\mathbb Z_{3})$ with multiplication and $*$ defined component wise, where $M_{2}(\mathbb Z_{2}),M_{2}(\mathbb Z_{3})$ are $*$-rings with  transpose as an involution. Let $e_{1}=(I,0) , e_{2}=(0,I)$ which are central. Clearly $e_{i}^2=e_{i} ~, e_{i}^*=e_{i}$. Thus $ e_{1} , e_{2} $ are central projections. Define $J=\{(A,0)\in R~/ ~tr(A)=0 ~in~ Z_{2}\}$. We have $tr(I)=0$ in $\mathbb Z_{2}$. Thus $e_{1}=(I,0)\in J$. For $x= (A,0)\in J ,~x^n=(A^n,0)$. The smallest central projection $e_{1}=(I,0)$ is such that  $x^ne=x^n$. So $GC(x)=e_{1}\in J$. Hence $J$ is a generalized central strict ideal. But $J \neq \sum(Re)$ because not all $(A,0)\in Re$ satisfy $tr(A)=0$.
    \end{example}
   Let $P$ be a prime ideal of ring $R$. Then $O(P)=\{a\in R~|~aRs=0~ {\rm for~ some} ~s\notin P\}=\{a\in  R~|~r(aR)\nsubseteq P\}$.  
   	 Clearly, $O(P)$ is an ideal of $R$ contained in $P$.   
   
   \begin{definition} Let $P$ be a prime ideal of ring $R$. Then $GO(P)=\{a\in R~|~(aR)^ns=0  ~ {\rm for ~ some} ~s\notin P,~ {\rm for ~some } ~n\in \mathbb{N}\}=\{a\in  R~|~r(aR)^n\nsubseteq P ~{\rm for ~ some}~ n\in \mathbb{N}\}$. 
   \end{definition}
  In the following result, we prove that $GO(P)$ is  a prime generalized central strict ideal of a generalized  p.q.-Baer  $*$-ring $R$ if $P$ is a prime ideal of $R$.
   \begin{theorem}\label{s7tm3} Let $R$ be a generalized  p.q.-Baer  $*$-ring and $P$ be a prime ideal of $R$. Then $GO(P)$ is a
   prime generalized central strict ideal of $R$.   
\end{theorem}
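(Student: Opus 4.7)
The plan is to reduce everything to the characterization $a \in GO(P) \iff GC(a) \in P$ and then read off each of the four required properties (ideal, proper, generalized central strict, prime) from this. To establish the characterization I would first note, using Propositions \ref{s3pr3} and \ref{s3pr6}, that for every $a \in R$ there is an $n$ with $r(aR)^n = (1-GC(a))R$; centrality of $1-GC(a)$ together with the equivalence ``$(aR)^n y = 0$ iff $(1-GC(a))y = 0$'' shows that the nondecreasing chain $\{r(aR)^k\}_k$ stabilises from this $n$ onward. Hence ``$r(aR)^n \nsubseteq P$ for some $n$'' is equivalent to ``$1 - GC(a) \notin P$'', and by primeness of $P$ applied to the identity $GC(a)(1-GC(a)) = 0$, this is in turn equivalent to $GC(a) \in P$.

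With the characterization in hand, the verifications are short. Properness: if $(aR)^n s = 0$ with $s \notin P$, primeness of $P$ gives $aR \subseteq P$ and hence $a \in P$, so $GO(P) \subseteq P$. Absorption under multiplication: $(raR)^n \subseteq r \cdot (aR)^n$ and $(arR)^n \subseteq (aR)^n$, so $(raR)^n s = 0 = (arR)^n s$ whenever $(aR)^n s = 0$. Generalized central strict: if $a \in GO(P)$ then $GC(a) \in P$, and since $GC(a)$ is a central projection Remark \ref{s3rm4} gives $GC(GC(a)) = GC(a) \in P$, so $GC(a) \in GO(P)$. Primeness: if $IJ \subseteq GO(P)$ for generalized central strict ideals $I, J$ and $x \in I \setminus GO(P)$, $y \in J \setminus GO(P)$, then $GC(x) \in I$, $GC(y) \in J$, so $GC(x)GC(y) \in IJ \subseteq GO(P)$; since $GC(x)GC(y)$ is a central projection, this forces $GC(x)GC(y) \in P$, and primeness of $P$ places one of $GC(x), GC(y)$ in $P$---a contradiction.

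The step I expect to be the main obstacle is additive closure. Given $a, b \in GO(P)$, take $s_1 = 1 - GC(a)$ and $s_2 = 1 - GC(b)$---both central and outside $P$---and form the central element $s = s_1 s_2$, which lies outside $P$ by primeness. To see $((a+b)R)^{n+m-1}\cdot s = 0$ for suitable $n, m$, I would expand a generic product as a sum of monomials $(c_1 r_1)\cdots(c_{n+m-1} r_{n+m-1})$ with each $c_i \in \{a, b\}$. Pigeonhole forces every such monomial to have at least $n$ copies of $a$ or at least $m$ copies of $b$. In the first case, absorbing the intermediate $c_j r_j$ blocks into enlarged connectors between consecutive $a$'s rewrites the monomial as $A \cdot (a \tilde r_1)(a \tilde r_2)\cdots(a \tilde r_n)$ for some $A \in R$, whose product with $s$ vanishes because $(aR)^n s_1 = 0$ and $s_1$ is central; the second case is symmetric using $s_2$. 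The combinatorial rewriting is straightforward but fiddly, and the crucial ingredient is the centrality of $s_1, s_2$, which lets a single witness $s$ simultaneously annihilate monomials of both types.
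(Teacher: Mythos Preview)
Your proposal is correct and follows essentially the same approach as the paper: both hinge on the characterization $a\in GO(P)\iff GC(a)\in P$, from which the generalized central strict property is immediate. The only noteworthy differences are that you spell out the ideal axioms (particularly additive closure via the pigeonhole argument), which the paper omits entirely, and that for primeness the paper takes the slightly shorter route $IJ\subseteq GO(P)\subseteq P\Rightarrow I\subseteq P$ (say), whence $GC(x)\in I\subseteq P$ for every $x\in I$, rather than arguing through the product $GC(x)GC(y)$.
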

   \begin{proof} We have, $GO(P)=\{a\in R~|~(aR)^ns=0 ~ for ~ some ~s\notin P ,n\in \mathbb{N}\}=\{a\in  R~|~r(aR)^n\nsubseteq P~for ~ some~ n\in \mathbb{N}\}=\{a\in R~|~(1-GC(a))R \nsubseteq P\} =\{a~|~1-GC(a)\notin P\}=\{a~|~GC(a)\in P\}$.
   	Let $x\in GO(P)$ then $ GC(x)\in P$. That is $GC(GC(x))=GC(x)\in P$. Therefore $GC(x)\in GO(P)$ hence $GO(P)$ is a generalized central strict ideal. Now suppose $IJ\subseteq GO(P)$ where $I,J$ are generalized central strict ideals. Thus $IJ\subseteq P$. Therefore $I\subseteq P$ or $J\subseteq P$. Let $I\subseteq P$. Let $x\in I$. Hence $GC(x)\in I\subseteq P$. Therefore $x\in GO(P)$. That is $I\subseteq GO(P)$ and hence $GO(P)$ is the prime generalized central strict ideal of $R$.
   \end{proof}
   In the following result, we prove that any prime generalized central strict ideal of a generalized  p.q.-Baer $*$-ring $R$ is of the form $GO(P)$, for some prime ideal $P$ of $R$.
   
   \begin{theorem}\label{s7tm4} Every prime generalized central strict ideal of a generalized  p.q.-Baer $*$-ring $R$ is of the form $GO(P)$, where $P$ is a prime ideal of $R$.
   \end{theorem}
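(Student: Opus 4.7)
The plan is to construct, from the given prime generalized central strict ideal $Q$, a prime ideal $P$ of $R$ whose intersection with $B(R)$ matches that of $Q$, and then to verify $Q=GO(P)$. The guiding observation is that the characterization $GO(P)=\{a\in R\mid GC(a)\in P\}$ derived in the proof of Theorem \ref{s7tm3} shows that $GO(P)$ depends on $P$ only through $P\cap B(R)$, while Theorem \ref{s6tm1} already pins prime generalized central strict ideals to maximal ideals of $B(R)$ through $Q\mapsto Q\cap B(R)$. So there is essentially only one candidate for $P$, up to what happens off $B(R)$.

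First I would set $M=Q\cap B(R)$, which is maximal in $B(R)$ by Theorem \ref{s6tm1}, and consider the complement $S=B(R)\setminus M$. Because $M$ is a prime ideal of the Boolean algebra $B(R)$, the set $S$ is multiplicatively closed in $R$, and clearly $S\cap Q=\emptyset$ since every element of $S$ lies in $B(R)\setminus Q$. A routine Zorn's lemma / Krull-style argument then produces an ideal $P$ maximal among ideals of $R$ containing $Q$ and disjoint from $S$, and any such $P$ is automatically a prime ideal of $R$. By the maximality, $P\cap B(R)\subseteq B(R)\setminus S=M$, and combined with $Q\subseteq P$ this forces $P\cap B(R)=M=Q\cap B(R)$.

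Next I would verify $Q=GO(P)$. For $Q\subseteq GO(P)$: given $a\in Q$, the generalized central strict property yields $GC(a)\in Q\cap B(R)=M\subseteq P$, so $a\in GO(P)$. For $GO(P)\subseteq Q$: given $a\in GO(P)$, the characterization of $GO(P)$ yields $GC(a)\in P$, and since $GC(a)\in B(R)$ we have $GC(a)\in P\cap B(R)=M\subseteq Q$; then Lemma \ref{s6lm1}(2) produces some $n\in\mathbb{N}$ with $a^n\in Q$. To promote this to $a\in Q$, my plan is to let $J$ be the smallest generalized central strict ideal containing $a$; by iterating Lemma \ref{s6lm4} together with Corollary \ref{s6cr1} there is an exponent $N$ such that $J^N$ is again a generalized central strict ideal, and tracing the generators one finds $J^N\subseteq Q$. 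Applying the primality of $Q$ inductively to $J^N=J\cdot J^{N-1}$, after raising exponents at each step so every factor is a generalized central strict ideal in the sense of Lemma \ref{s6lm4}, forces $J\subseteq Q$ and therefore $a\in Q$.

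The hard part will be precisely this last promotion from $a^n\in Q$ to $a\in Q$: the primality of $Q$ is only available against generalized central strict ideals, not arbitrary ones, so the decomposition of $J^N$ has to be carried out inside the class of generalized central strict ideals, which is the entire point of Lemma \ref{s6lm4}. Once that closure argument is in place, the remainder of the proof is bookkeeping with $B(R)$ and standard prime avoidance.
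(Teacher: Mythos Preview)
Your overall strategy is exactly the paper's: the set $S=B(R)\setminus M$ you choose coincides with the paper's $S=\{1-e:e\text{ a central projection in }Q\}$ (since $M=Q\cap B(R)$ is a prime ideal of the Boolean algebra $B(R)$, its complement in $B(R)$ is precisely $\{1-e:e\in M\}$), and the Zorn's-lemma construction of $P$, its primality, and the identification $P\cap B(R)=Q\cap B(R)$ all proceed identically. Both arguments then reduce the inclusion $GO(P)\subseteq Q$ to the implication $GC(a)\in Q\Rightarrow a\in Q$.

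The gap is in your plan for that final implication. You propose to take the smallest generalized central strict ideal $J$ containing $a$ and argue that some power $J^N\subseteq Q$, but nothing in the outline establishes $J^N\subseteq Q$. Knowing $GC(a)\in Q$ gives only $a^n\in Q$ via Lemma~\ref{s6lm1}(2), i.e.\ $a^nR\subseteq Q$; it does not bound arbitrary $N$-fold products of elements of $J$, which must include the $GC$-closures of elements of $RaR$ that need not lie in $GC(a)\,R$. Lemma~\ref{s6lm4} only tells you that certain powers and products of generalized central strict ideals are again of that type; it supplies no containment in $Q$, so the inductive splitting $J^N=J\cdot J^{N-1}$ never gets started. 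The paper, by contrast, does not attempt this detour: after reaching $GC(x)\in Q$ and hence $x^n\in Q$, it simply writes ``so $x\in J$ since $J$ is a prime generalized central strict ideal,'' treating the step as immediate. Your instinct that this is the delicate point is sound, but the $J^N$ route does not close it as written.
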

   \begin{proof} We have  $GO(P) =\{a\in R/GC(a)\in P\}$. Let $J$ be a prime generalized central strict ideal of $R$. Let $S= \{1-e~|~e ~is ~ central~ projection ~in ~J\}$. Clearly, $S$ is multiplicatively closed set with $S\bigcap~ J=\phi$. By Zorn's lemma, there exists a maximal ideal $P$ of $R$ such that $J\subseteq P , ~S\bigcap P=\phi$. Let us prove that $P$ is a prime ideal of $R$. Suppose $P_1P_2\subseteq P$ where $ P_1 , P_2$ are ideal of $R$. Let $P_1 \nsubseteq P ,P_2 \nsubseteq P $. Therefore there exists $e_1\in P_1$ such that $e_1 \notin P$ hence $1-e_1\in P$. Also  there exists $e_2\in P_2$ such that $e_2 \notin P$ hence $1-e_2\in P$. Thus $1-e_1\in P\bigcap S ,1-e_2\in P\bigcap S$. But $(1-e_1)(1-e_2)=1-(e_1+e_2-e_1e_2)\in P\bigcap S$ which is a contradiction. Therefore $P$ is a prime ideal of $R$. We will prove that $GO(P) =J$. If $x\in GO(P)$ then $GC(x)\in P$. Suppose $GC(x)\notin J$. So $1-GC(x)\in J$ which implies 
   	$1-GC(x)\in P$ which is a contradiction. Hence $GC(x)\in J \Rightarrow x^n\in J$ for some $n\in \mathbb{N}$. So $x\in J$ since $J$ is a prime generalized central strict ideal. Thus $GO(P) \subseteq J$. Now, let $y\in J$, therefore $GC(y)\in J\subseteq P$ and hence $y\in GO(P)$. That is $J\subseteq GO(P)$. Thus $GO(P) =J$.
   
\end{proof}

{\bf Disclosure statement:} The authors report there are no competing interests to declare.

 \end {document}